\newtheorem{thm}{Theorem}[section]
\newtheorem{rmk}{Remark}[section]
\newtheorem{example}{Example}[section]
\newtheorem{definition}{Definition}[section]
\newproof{pf}{Proof}
\numberwithin{equation}{section}
\numberwithin{figure}{section}
\numberwithin{table}{section}
\newcommand\dd{\mathrm{d}}
\newcommand\bF{\bm{F}}
\newcommand\bv{\bm{v}}
\newcommand\bx{\bm{x}}
\newcommand\bV{\bm{V}}
\newcommand\bU{\bm{U}}
\newcommand\bT{\bm{T}}
\newcommand\tbF{\widetilde{\bm{F}}}
\newcommand\tbU{\widetilde{\bm{U}}}
\newcommand\thF{\widehat{\bm{F}}}
\newcommand\pd[2]{\dfrac{\partial {#1}}{\partial {#2}}}
\newcommand\abs[1]{\lvert #1 \rvert}
\newcommand\jump[1]{\llbracket #1 \rrbracket}
\newcommand\mean[1]{\{\!\!\{ #1 \}\!\!\}}
\newcommand\meanln[1]{\{\!\!\{ #1 \}\!\!\}^{\text{ln}}}
\begin{document}

	\begin{frontmatter}
	
	\title{High-order accurate entropy stable schemes for  compressible Euler equations  with van der Waals equation of state on adaptive moving meshes}
	
	            \author{Shangting Li}
            	\ead{shangtl@pku.edu.cn}
            	\address{ Center for Applied Physics and Technology, HEDPS and LMAM,
            		School of Mathematical Sciences, Peking University, Beijing 100871, P.R. China}
		   \cortext[cor1]{Corresponding author. Fax:~+86-10-62751801.}
	         \author{Huazhong Tang\corref{cor1}}
	         \ead{hztang@math.pku.edu.cn}
			\address{Nanchang Hangkong University, Jiangxi Province, Nanchang 330000, P.R. China;  Center for Applied Physics and Technology, HEDPS and LMAM,
				School of Mathematical Sciences, Peking University, Beijing 100871, P.R. China}

	
	\begin{abstract}
		This paper develops the high-order entropy stable (ES)  finite difference schemes for multi-dimensional compressible Euler equations with the van der Waals equation of state (EOS) on adaptive moving meshes.
 Semi-discrete schemes are first nontrivially constructed  built on  the newly derived high-order entropy conservative (EC) fluxes in curvilinear coordinates and scaled eigenvector matrices as well as the multi-resolution WENO reconstruction, and
then the fully-discrete schemes are  given by using the high-order explicit
strong-stability-preserving Runge-Kutta time discretizations.
 The
high-order EC fluxes in curvilinear coordinates are derived by using the discrete geometric conservation laws and the linear combination of the two-point symmetric  EC fluxes, while
the two-point  EC fluxes
 are delicately selected by using their sufficient condition, the thermodynamic entropy and the technically selected parameter vector.
%
%
The adaptive moving meshes are iteratively generated by solving the mesh redistribution equations, in which 
the fundamental derivative related to the occurrence of non-classical waves is involved to  produce high-quality mesh.
   Several numerical tests on the
   parallel computer system with the MPI programming are conducted to validate the
   accuracy, the ability to capture the classical and non-classical waves, and the high efficiency of our schemes in comparison with their counterparts  on the uniform mesh.

	\end{abstract}
	
	\begin{keyword}
	 Entropy stable scheme\sep entropy conservative scheme\sep  mesh redistribution
	 \sep van der Waals  equation of state
	\end{keyword}
	
\end{frontmatter}
\section{Introduction}\label{section:Intro}
This paper is concerned with  high-order accurate entropy stable (ES)  schemes to  the  compressible Euler equations with the van der Waals equation of state (EOS),
which  are given by
\begin{align}\label{eq:ConserLaw}
&\frac{\partial \bU}{\partial t}+\sum_{k=1}^{d} \frac{\partial \bF_{k}(\bU)}{\partial x_{k}}=0, \quad d=1,2,3,\\ \label{eq:ConserLaw2}
&\boldsymbol{U}= \left(
\rho, \rho \bm{v}^\mathrm{T},  E
\right)^\mathrm{T}, \\  \label{eq:ConserLaw3}
&\boldsymbol{F}_{k}=\left(
\rho v_{k},\rho v_{k}\bm{v}^\mathrm{T} + p\bm{e}_{k}^\mathrm{T}, (E+p) v_{k}
\right)^\mathrm{T},
\end{align}
and
\begin{align}\label{eq:vdWEOS}
p=\frac{\rho R T }{1-\rho b}-a \rho^{2}, \quad e = c_vT - a\rho,
\end{align}
where $\rho$, $\bm{v} = (v_{1}, \cdots, v_{d})^{\mathrm{T}}$, $T$, and
$E=\rho e  +\rho|\bm{v}|^{2} /2$
are the density,  the velocity  vector, the temperature, and the total energy, respectively.
Moreover, $\bm{e}_{k}$ denotes the $k$th column of the $d \times d$ unit matrix,   $e$ is  the specific internal energy, $R$ is the positive gas constant, $c_v$ is the specific heat at constant volume, and $a \geqslant 0$ and $b \geqslant 0$ are two  van der Waals constants depending on the intermolecular forces and the size of the molecules.
Obviously, when $a=0$ ad $b=0$, the van der Waals EOS \eqref{eq:vdWEOS} reduces to the ideal gas law.
The van der Waals EOS is often used to  depict the potentially non-classical phenomena  (occurrences of non-classical waves)  above the saturated vapour curve near the thermodynamic critical point and has important applications in engineering such as heavy gas wind tunnels and organic Rankine cycle engines \cite{anders1999transonic,wagner1978theoretical,Colonna2007}. Several researchers investigated the creation and evolution of the  non-classical waves
\cite{Bethe1998,Thompson1972,Zeldovich1946,doi:10.1063/1.866082,doi:10.1063/1.857855,doi:10.1063/1.858183},
%
and studied the numerical schemes such as 
the TVDM (total variation diminishing MacCormack) methods \cite{Argrow1996,2000Two,brown1998nonclassical} and the Roe type schemes \cite{ABGRALL1991171,GLAISTER1988382,Guardone2001,CINNELLA20061264,GUARDONE200250}
for the compressible Euler equations with the van der Waals EOS.

The van der Waals EOS \eqref{eq:vdWEOS} can be rewritten into a cubic equation with respect to the specific volume $\nu := 1/\rho$ as follows
\begin{align}\label{eq:vdWEOS-v02}
\nu^3 - \left(b+\frac{RT}{p}\right)\nu^2 + \frac{a}{p}\nu - \frac{ab}{p} = 0,
\end{align}
so that the roots for the specified gas at a given pressure
 are no more than  three cases: three identical real roots, three different real roots,
 and  one real root and two imaginary roots \cite{PRODANOV2022414077}.
 The thermodynamic critical point at the critical temperature $T_c$ (where the  derivatives $p_\nu$ and $p_{\nu\nu}$ are zero) corresponds to the case of that the cubic equation
 \eqref{eq:vdWEOS-v02} has three identical real roots,
and thus the van der Waals constants
 $a$ and $b$ can be obtained by analyzing  \eqref{eq:vdWEOS-v02}  at such critical point as follows
\begin{align}\label{eq:vdwconstant}
b = \frac{1}{3\rho_{c}}, \quad a = \frac{9p_{c}}{8Z_{c}\rho_{c}^2}, \quad Z_{c} = \frac{p_{c}}{R\rho_{c}T_{c}} = \frac{3}{8},
\end{align}
where $p_c$ and $\rho_c$  are the critical pressure and density, respectively,
and
will be used to non-dimensionalize  the compressible Euler equations with the van der Waals EOS \eqref{eq:ConserLaw}-\eqref{eq:vdWEOS}
in the next section. 

This paper only considers the van der Waals gas, whose
 physically admissible state should satisfy
\begin{align}
\label{eq:coefflimit}
0<\rho < 1/b, \quad T > T_0, \quad {R} T-2 a \rho(1-b \rho)^{2}>0.
\end{align}
In fact,
the van der Waals EOS \eqref{eq:vdWEOS} implies that the pressure $p$ is positive at any volume
if $ 0<\rho < 1/b$ and $R T  - a\rho(1-\rho b) > 0$.
The term $R T  - a\rho(1-\rho b)$ can be viewed as the quadratic equation about $\rho$ so that
the expression $R T  - a\rho(1-\rho b) $ is always   positive if its discriminant $-4abRT + a^2 < 0$, i.e.
\begin{align}
\label{eq:Tlimit}
T > T_0 := \dfrac{a}{4Rb}.
\end{align}
So, one has $ p>0$ if $  0<\rho < 1/b$ and $T > T_0 $.
On the other hand, from the fact that the product of the thermal expansion coefficient $-\frac{1}{\rho}(\partial_{T} \rho)|_{p}$ and the  Gr\"{u}neisen coefficient $\frac{1}{\rho}\left( \partial_{e} p\right)|_{\rho}$
is nonnegative \cite{SHYUE199943}, i.e.
\begin{align*}
\left(-\left.\frac{1}{\rho}\left(\partial_{T} \rho\right)\right|_{p}\right)\left(\left.\frac{1}{\rho}\left(\partial_{e} p\right)\right|_{\rho}\right)
=\left(\frac{{R}(1-b \rho)}{{R} T-2 a \rho(1-b \rho)^{2}}\right)\left(\frac{\delta}{1-b \rho}\right)\geqslant 0,
\end{align*}
with $\delta= \gamma-1 = R/c_v >  0$,
one has
\begin{align*}
{R} T-2 a \rho(1-b \rho)^{2}  
>0,
\end{align*}
here the case of  $RT-2a\rho(1-b\rho)^2 = 0$ has been excluded, relating to the infinite thermal expansion coefficient happening at the critical points.

{Additionally, it is worth noting that}
the fundamental derivative $G$ introduced by
\cite{thompson1971fundamental}  is a vital  parameter
governing the nonlinear dynamics of gases
\begin{align}\label{eq:fundamental}
G=\frac{\nu^{3}}{2 c_s^{2}}\left(\frac{\partial^{2} p}{\partial \nu^{2}}\right)_{s} =1+\frac{\rho}{c_s}\left(\frac{\partial c_s}{\partial \rho}\right)_{s},
\end{align}
where $c_s$  and $s$ are respectively the speed of sound and the specific entropy and
expressed as
\begin{align*}
c_{s}  
= \left[(\delta + 1)\frac{RT}{(1-b\rho)^2} - 2a\rho\right]^{1/2},\quad
s = c_v{\ln T} - R\ln(\nu- b).
\end{align*}
%
The sign of $G$ can be viewed as a parameter related to the occurrence of non-classical waves
such as the  expansion shocks etc.
To show that, 
	by using the Taylor expansion and the Rankine-Hugoniot jump conditions
	for a weak shock wave,  the relationship  between the entropy change
	$\Delta s$ and  the specific volume change $\Delta \nu$ is given as follows \cite{Bethe1998}
$$\Delta s=-\left(\frac{\partial^{2} p}{\partial \nu^{2}} \right)_s\frac{(\Delta \nu)^{3}}{12 T_f} + \mathcal{O}\left(\Delta \nu^4\right),$$
where $\left(\frac{\partial^{2} p}{\partial \nu^{2}}\right)_s$
presents the isentropic curvature which may be $0$
at the critical point and $T_f$ is the temperature before the shock wave.
Neglecting the  term $\mathcal{O}\left(\Delta \nu^4\right)$, one may have
the following result.
 For the ideal gas away from the critical point,
 it has $\left(\frac{\partial^{2} p}{\partial \nu^{2}}\right)_s> 0$,
  equivalently $G>0$.
  {Thus, if $\Delta\nu < 0$, then $\Delta s >0$ and a  compressive shock wave is formed.}
  For the gas with sufficiently large specific heats,
  the isentropic curvature $\left(\frac{\partial^{2} p}{\partial \nu^{2}}\right)_s $ and $G$ may be negative,
  so $\Delta s$ may be positive and the expansion shock wave without violating the entropy conditions may be formed when $\Delta\nu > 0$.

 For the hyperbolic system \eqref{eq:ConserLaw}-\eqref{eq:vdWEOS}, the physically relevant solutions may be  discontinuous even if the initial data are smooth, so one should consider weak solutions which are  not unique in general,  and the {entropy conditions}  satisfied by non-classical and classical waves are the significant property to single out the physically relevant solution out of  weak solutions.

 To select the physically relevant solution out of all  weak solutions, it is significant to construct the {ES schemes} satisfying  discrete or semi-discrete entropy conditions.  For the scalar conservation laws, the fully-discrete monotone conservative  schemes can converge to the entropy solution satisfying the entropy conditions \cite{Crandall1980Monotone,Harten1976}.
Semi-discrete E-schemes \cite{Osher1984Riemann,SOsher1988} satisfy the
entropy conditions for any convex entropy pairs.
 For the system of hyperbolic conservation laws,
the semi-discrete second-order entropy conservative (EC) scheme satisfying the  entropy identity was  constructed in  \cite{Tadmor1987The,Tadmor2003Entropy}, and the higher-order extension was proposed in \cite{Lefloch2002Fully}.
In order to  suppress possible oscillations near the discontinuities,
with the help of  the ``sign"  property of the ENO reconstruction,
some numerical dissipative terms
 were added to EC schemes to get  the semi-discrete ES schemes  \cite{Fjordholm2012Arbitrarily}. The ES  schemes {were} also extended to the ES discontinuous Galerkin (DG) based on  the summation-by-parts   operators \cite{Hiltebrand2014Entropy, Gassner2013A,Carpenter2014Entropy,Chen2020Review}.
 Recently, the EC or ES schemes were constructed for
 the (multi-component) compressible Euler equations  \cite{Ismail2009Affordable,chandrashekar_2013,li2022}, the relativistic (magneto)hydrodynamic equations \cite{Bhoriya2020Entropy,Wu2020Entropy,DUAN2021109949,duan2021highorder},
 and so on.
In a mimicking manner,	the energy stable schemes for the shallow water equations could also be developed, see  \cite{Fjordholm2011Well,zhang2023highorder,zhang2024highorder}.

In order to improve the efficiency and quality of the numerical solutions, adaptive moving mesh methods are considered,
which play an important role in solving partial differential equations, including the grid redistribution approaches \cite{Brackbill1993An,Brackbill1982Adaptive,Ren2000An,Wang2004A,Winslow1967Numerical}, the moving mesh PDEs methods \cite{CAO1999221,CENICEROS2001609,Stockie2001},
and \cite{DUAN2021109949, LI1997368,He2012RHD,He2012RMHD,Li2001Moving, Tang2003Adaptive,Tang2003An}.
For more details, readers are  referred to the
	review articles \cite{Budd2009Adaptivity,Tang2005Moving} for more exhaustive treatments.

This paper focuses on  constructing high-order adaptive moving mesh  ES schemes for the compressible Euler equations with the van der Waals  EOS.  Technically constructing the two-points EC fluxes is the key point which is difficult due to the non-linearity arising from the  van der Waals  EOS \eqref{eq:vdwconstant}. The explicit form of the two-points symmetric EC fluxes   in non-dimensional variables is first derived based on the carefully chosen parameter vectors and the thermodynamic entropy under rational conditions. Utilizing the linear combination of the two-points fluxes and  the high-order discrete geometric conservation laws,  the high-order
EC fluxes in curvilinear coordinates are constructed.
To avoid numerical oscillations near non-classical and classical waves, 
the
high-order dissipation terms, based on the newly derived complex scaled eigenmatrix and
the multi-resolution WENO
reconstruction \cite{WANG2021105138}, are added to the high-order EC fluxes to obtain
  the  semi-discrete high-order ES {schemes} satisfying the entropy inequality.
The mesh adaptation is implemented
by iteratively solving the Euler-Lagrange equations of the mesh adaptation functional with appropriate monitor function,  clustering the mesh  in the  region of interest. The monitor function is related to the fundamental derivative $G$ given in \eqref{eq:fundamental}, so the mesh can concentrate around the region where $G$ changes to capture the non-classical wave appearing in the van der Waals gas, which improves the performance of the solution.
The explicit third-order
strong-stability-preserving Runge-Kutta (SSP-RK) method \cite{Gottlieb2001Strong} is used for the time discretization.
	Several numerical results are provided to verify the efficiency of our schemes, which outperform their counterparts on a uniform mesh implemented on a parallel computer using MPI programming.

This paper is organized as follows. Section \ref{section: GoverningEquations} gives
the dimensionless versions of the governing equations \eqref{eq:ConserLaw}-\eqref{eq:vdWEOS}   in Cartesian and curvilinear coordinates, corresponding entropy conditions, 
and the physically admissible state.
Section \ref{section:Num} derives the explicit expression of
the two-points symmetric EC flux by solving a linear algebraic system deduced by  the chosen parameter vector, 
and then constructs
the high-order EC and ES schemes for the  compressible Euler equations with the van der Waals  EOS in curvilinear coordinates. 
Section \ref{section:MM} presents the
adaptive moving mesh strategy. Several numerical results are presented in Section \ref{section:Result} to validate the performance of high-order adaptive moving mesh  ES schemes. Some  conclusions are given in Section \ref{section:Conc}.

\section{Entropy conditions}\label{section: GoverningEquations}
This section introduces the dimensionless forms of the system
\eqref{eq:ConserLaw}-\eqref{eq:vdWEOS}  in Cartesian and curvilinear coordinates,  and  corresponding entropy conditions.
If the following non-dimensional thermodynamic variables
\begin{align*}
&	p_*=\frac{p}{{p}_{c}}, \quad \rho_*=\frac{\rho}{{\rho}_{c}}, \quad T_*=\frac{T}{{T}_{c}}, \quad v_{k,*}=\frac{v_k}{\left(R{ T}_{c}\right)^{1 / 2}},  \\
&	e_*=\frac{e}{R T_{c}}, \quad s_*=\frac{s-s_c}{R}, \quad c_{s,*}=\frac{c_s}{\left(R T_{c}\right)^{1 / 2}},   
\end{align*}	
are used, then the  conservative variable vector $\bU$  and the flux vectors $\bF_{k}$ in \eqref{eq:ConserLaw}-\eqref{eq:ConserLaw3} can be
 {rewritten in dimensionless form \cite{2000Two}  as follows }
\begin{equation}
\label{eq:Euler}
\begin{aligned}
&\bU_*=\left[\rho_*, \rho_*\bv^{\mathrm{T}}_*,  E_*\right]^{\mathrm{T}},
\\ &
\bF_{k,*} =\left[\rho_* v_{k,*}, \rho_* v_{k,*}\bv^{\mathrm{T}}_* + {\frac{3}{8}}p_* \bm{e}_{k}^\mathrm{T},\left(E_* + {\frac{3}{8}}p_*\right) v_{k,*}\right]^{\mathrm{T}}.
\end{aligned}
\end{equation}
Utilizing the van der Waals constants $a, b$ given in \eqref{eq:vdwconstant}, the van der Waals EOS \eqref{eq:vdWEOS} can be reformulated as
\begin{align}\label{eq:nondimEOS}
&p_* =\frac{1}{p_{c}}\left(\dfrac{R\rho_{c} T_{c}\rho_*T_*}{1-1/3\rho_*} - \dfrac{3p_{c}}{\rho_{c}^2}\rho_{c}^2\rho_*^2\right)
=
\dfrac{8\rho_*T_*}{3-\rho_*} - {3}\rho_*^2,\\
&e_* = \frac{c_vT_cT_*}{RT_{cr}} - \frac{3p_{c}}{\rho_{c}}\frac{1}{RT_{c}}\rho_* = \frac{T_*}{\delta} - \frac{9}{8}\rho_*,
\end{align}
and the  fundamental derivative, the thermodynamic entropy and the speed of sound   in the  non-dimensional variable form can be given by
 \begin{align*}
 G_*&  =
 \dfrac{(\delta+1)(\delta+2)\dfrac{p_{c}p_*+3p_{c}\rho_*^2}{\left({1}/{\rho_*}-1/3\right)^2{1}/{\rho_{c}^2}} - 18p_{c}\rho_{c}^2\rho_*^4}{2(\delta+1)\rho_{c}^2 \frac{p_{c}p_*+3p_{c}\rho_*^2}{{1}/\rho_*\left({1}/{\rho_*}-1/3\right) } - 12p_{c}\rho_{c}^2\rho_*^4}\\
 &=\frac{27}{2(c_{s,*})^{2}}\left[\left(\frac{1}{3-\rho_*}\right)^{3}\left(2+3 \delta+\delta^{2}\right) T_*-\frac{\rho_*}{4}\right],\\
  s_* &= \frac{\ln T_*}{\delta} + \ln(\frac{3-\rho_*}{2\rho_*}),\\
   c_{s,*} &= \frac{1}{(RT_c)^{1/2}}\left[\dfrac{(\delta + 1) RT_*}{\left(1-\frac{1}{3}\rho_*\right)^2}- 2 \frac{3p_c}{\rho_c}\rho_*\right]^{1/2}
 =\left[(\delta + 1) T_*\left(\dfrac{3}{3-\rho_*}\right)^{2} - \frac{9}{4}\rho_*\right]^{1/2}.
 \end{align*}
One can derive that the entropy $s_*$ satisfies
$$\partial_{t_*} s_* + \sum\limits_{k = 1}^{d} v_{k,*}
\dfrac{\partial {s_*}}{\partial {x_{k,*}}}  \geq 0,$$
or $$
\frac{\partial(\rho_*  s_*)}{\partial t_*}+ \sum\limits_{k
	= 1}^{d}\frac{\partial\left(\rho_* s_* v_{k,*}\right)}{\partial x_{k,*}} \geq 0.
$$

 In the subsequent discussion, the subscript $*$
 will be omitted for convenience,
 unless otherwise stated.
 	\begin{figure}[!ht]
 	\centering
 	\includegraphics[width=0.55\linewidth]{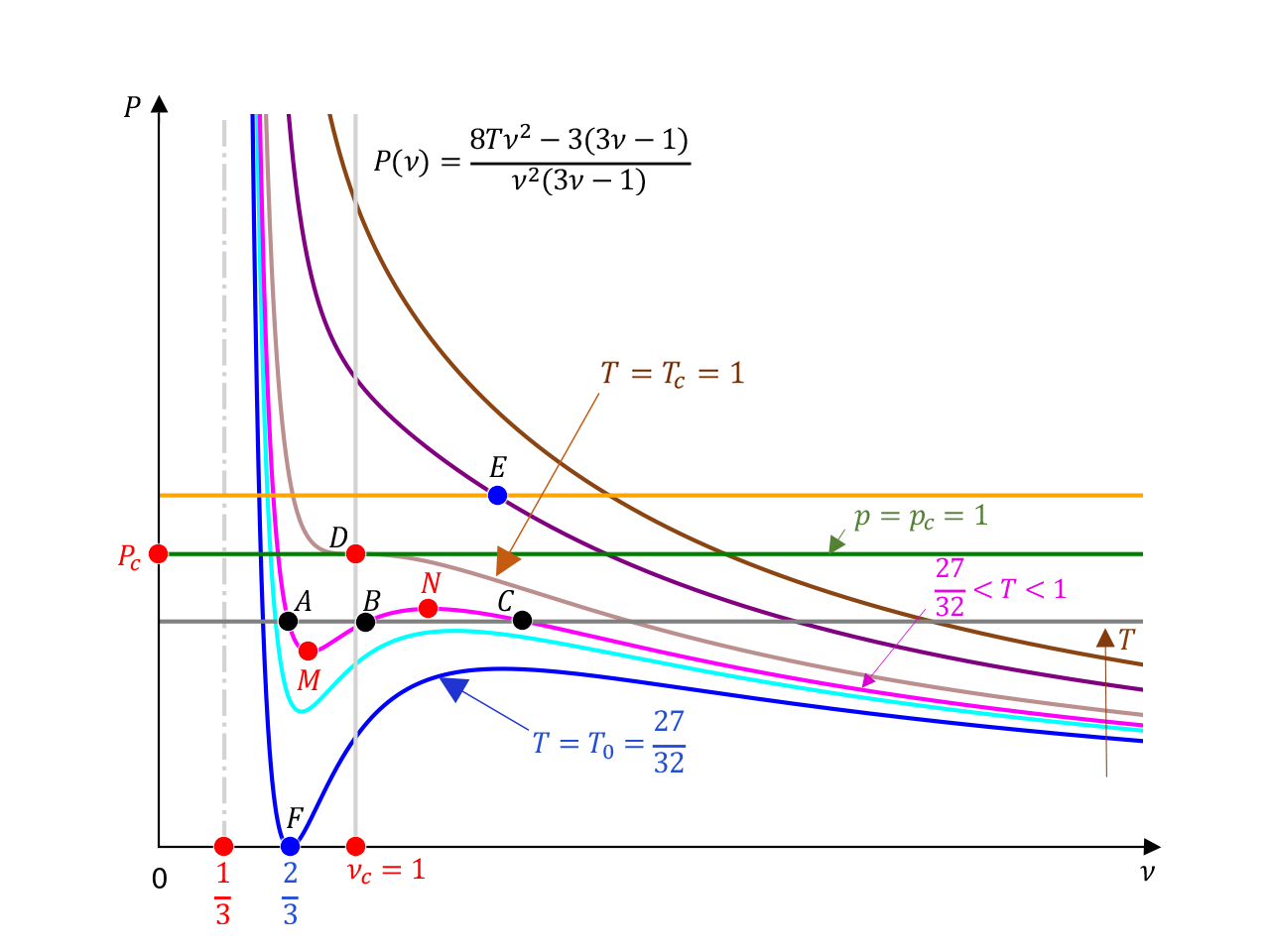}
 	\caption{The curves of the pressure $p = {8T}/{(3\nu-1)} - {3}/{\nu^2}$
 	with respect to specific volume $\nu=1/\rho$  at different temperature $T$.  }
 	\label{fig:phase_diagram}
 \end{figure}

\begin{rmk} \label{rem-eq:constraint} \rm
  The inequalities  \eqref{eq:coefflimit} and \eqref{eq:Tlimit}  in the  dimensionless variable form become
	$$
	 T > T_0 = 27/32, \quad{4T -\rho(3-\rho)^2}>0,
	 $$
	 respectively, so the dimensionless solution for the van der Waals gas should
	 satisfy them and $ 3>\rho>0$.
\hfill$\Box$
	\end{rmk}
	
\begin{rmk}\rm
The admissible states for the van der Waals gas can be
further discussed
by observing the $p$-$\nu$ curves
 in  Figure \ref{fig:phase_diagram},
similar to those  in \cite[Fig.1]{PRODANOV2022414077}.
For  the van der Waals gas, see Remark \ref{rem-eq:constraint},
the slope  of the $p$-$\nu$ curve
	\begin{align}\label{eq:diff_p}
	p_\nu =-6 \times \dfrac{4T\nu^3- (3\nu-1)^2}{(3\nu-1)^2},
	\end{align}
should be  less than zero.
The point D  is the thermodynamic critical point at the critical temperature $T_c$,
where both the first- and second-order derivatives $p_\nu $ and  $p_{\nu\nu} $  are $0$, so  the temperature  $T_D = T_c = 1$.
The  left and right  sides of the point D correspond to the
 liquid and gas, respectively.
The   $p$-$\nu$ curves at  the upper right side of  the isotherm $ T_c$
are the isotherms with $T> T_c=1$,
corresponding to the gas  and satisfying $ p_\nu<0$  when $\nu > 1/3$. The isotherm with $T \geqslant T_c$ has only one intersection point such as the point E with the isobaric line
(the horizontal line),
that is, the value of $\nu$ is uniquely determined   if $p$ and $T$ are given.
The point F is the local minima of the isotherm with $T = T_0$ and  $p = 0$.
	 Only when $T > T_0$, $p>0$ for all $\nu > 1/3$,
	 so that the solution should  satisfy the admissibility condition $T > T_0 $.
	  When $T_c  > T > T_0$, see e.g.  the isotherms where points A, B, and C are located, the  states between the local minimum (at the point M with $p_\nu=0$)
	  and the local maximum (at the point N with $p_\nu=0$)
	  of $p$  are unstable \cite{PRODANOV2022414077}.
Due to the discriminant $4T(T-1) < 0$, 	 the equation $p_\nu = 0$, written as $(\rho-2)^3 - 3(\rho-2) - 4T+2 = 0$,  has three unequal real roots  given explicitly by
	 \begin{align*}
	 &\rho^{*} = 2\cos\left(\dfrac{\arccos(2T-1)}{3}\right)+2,\quad
	 \rho^{**} = 2\cos\left(\dfrac{\arccos(2T-1)+2\pi}{3}\right)+2,\\
	 &\rho^{***} = 2\cos\left(\dfrac{\arccos(2T-1)+4\pi}{3}\right)+2.
	 \end{align*}
	For $1>2T-1> 11/16$, it is known that $\arccos(2T-1)\in(0,\pi/3)$, so
	that one has
	 $$\cos\left(\dfrac{\arccos(2T-1)}{3}\right)>\frac{1}{2},$$
which implies that $\rho^* > 3$ is not an admissible state, and
$\rho^{**}$ and $ \rho^{***}$ are the densities at  the points N and M, respectively,
satisfying
	 \begin{align*}
	 0 < &\rho_N= \rho^{**} < 2\cos\left(\dfrac{2\pi}{3}\right)+2 = 1, \\
	 1=2\cos\left(\dfrac{4\pi}{3}\right)+2
	 <& \rho_M= \rho^{***}
	 < 2\cos\left(\dfrac{5\pi}{3}\right)+2=3.
	 \end{align*}
Because  $p_\nu = 0$ should have two identical real roots for $\nu > 1/3$ if $T = T_c$,  one has $\nu_N(T_c) = \nu_M(T_c) = \nu_D = 1$,	 so that
for  $T_c> T> T_0$, $p_\nu<0$ iff
	 $\nu > \nu_N$ or $\nu < \nu_M$.
 The left side of the point M corresponds to the liquid, while the right side of the point N is  gas.	
	 Thus, the van der Waals gas should satisfy $\nu > \nu_N$ when $T_c> T > T_0$.
Lastly, by the way, a correction, see \cite{PRODANOV2022414077}, may be made in the region  $p_\nu > 0$
 with the help of  Maxwell's assumption that the areas of the two parts enclosed by isotherms and isobars are equal. Specifially, 	 for the isotherms where points A, B, and C are located,
 Using the equality of the two areas bounded by the isotherm and the
isobar
	 \begin{align*}
	 \int_{\nu_A}^{\nu_B}\left(p-\frac{8 T}{3\nu-1}+\frac{3}{\nu^2}\right) {\rm{d}} \nu=\int_{\nu_B}^{\nu_C}\left(\frac{8 T}{3\nu-1}-\frac{ 3}{\nu^2}-p\right) {\rm{d}} \nu,
	 \end{align*}
gives the expression of $p$ depending on the unknowns $\nu_A$ and  $\nu_C$, which
are  the smallest and  largest roots of the van der Waals equation, respectively. After that,
 the A-B-C  segment  is used to describe the state of the gas-liquid coexistence region.
\hfill$\Box$
	\end{rmk}
%


Now, let us recall the mathematical definition of the entropy function.
\begin{definition}[Entropy function] \rm\label{def: entropy function}
	If associated  with strictly convex scalar function $\eta(\bU)$ there exist scalar functions  $q_{k}(\bU)$, $k=1, \cdots, d$,  for the system \eqref{eq:ConserLaw}  satisfying
	$$
	q_{k}^{\prime}(\bU)=\eta^{\prime}(\bU)^{\mathrm{T}} \bF_{k}^{\prime}(\bU), \quad k=1, \cdots, d,
	$$
 then $\eta$ and  $q_k$ are called
	the entropy function  and the entropy fluxes, respectively, and
	$(\eta, q_1,\cdots, q_d)$ forms an entropy pair.
	The gradient vector of $\eta$ with respect to $\bU$, i.e. $\bV=\eta^{\prime}(\bU)^{\mathrm{T}}$, is called the entropy variables.
\end{definition}

Consider the scalar function pair for  \eqref{eq:ConserLaw}  taken as
\begin{align} \label{eq:van der WaalsEntropy}
\eta(\bU)&= - \rho s, \quad q_k(\bU)=-\rho s v_k,
\end{align}
then one has
\begin{align*}
\boldsymbol{V}=\eta^{\prime}(\boldsymbol{U})^{\mathrm{T}}& = \left(-s  +\frac{1}{\delta} -\frac{|\bv|^2}{2T} - \frac{9\rho}{4T}  +\frac{3}{3-\rho}, ~\frac{\bv^{\mathrm{T}}}{T}, -\frac{1}{T}\right)^{\mathrm{T}},
\end{align*}
and
$$\dfrac{\partial^2 \eta}{\partial \bU^2}=\dfrac{\delta}{\rho T^2}
\left[\begin{array}{ccccc}
\dfrac{T}{\delta}\left(\dfrac{9T}{(3-\rho)^2} - \dfrac{9}{4}\rho  + |\bv|^2\right)  + {\left(\mu_\delta  - \dfrac{T}{\delta}\right)^2}& - v_1\mu_{\delta} &- v_2\mu_{\delta} &- v_3\mu_{\delta}&  \mu_{\delta} -\dfrac{T}{\delta}\\
- v_1\mu_{\delta} & v_1^2 + \dfrac{T}{\delta }& v_1 v_2 &v_1 v_3 &-v_1 \\
- v_2\mu_{\delta}& v_2v_1&v_2^2+\dfrac{T}{\delta} &v_2v_3 & -v_2\\
- v_3\mu_{\delta}&v_3v_1&v_3v_2 &v_3^2 + \dfrac{T}{\delta} & -v_3\\
\mu_{\delta} -\dfrac{T}{\delta }& -v_1 & -v_2& -v_3&1\\
\end{array}\right].
$$
It can be verified that $\dfrac{\partial^2 \eta}{\partial \bU^2}$ is  symmetric positive definite, so that
 $(\eta, q_1, \cdots, q_d)$ forms an entropy pair of  \eqref{eq:ConserLaw}.
In this case, the entropy potential $\phi$ and entropy flux potential  $\psi_k$ can be explicitly calculated by
\begin{equation}\label{eq:van der WaalsPoten}
\begin{aligned}
\phi&=\boldsymbol{V}^{\mathrm{T}} \boldsymbol{U}-\eta(\boldsymbol{U})= - \left(\frac{9\rho}{8T} - \frac{3}{3-\rho}\right)\rho,\\
\psi_{k}&=\boldsymbol{V}^{\mathrm{T}} \boldsymbol{F}_{k}(\boldsymbol{U})-q_{k}(\boldsymbol{U})= - \left(\frac{9\rho}{8T} - \frac{3}{3-\rho}\right)\rho v_{k},
\end{aligned}
\end{equation}
and  the system  \eqref{eq:ConserLaw}
may be symmetrized with the change of variables $\bU \mapsto \bV$,
 because that  $\frac{\partial \bU}{\partial {\bV}}$ is symmetric positive definite, and   $\frac{\partial {\bF_k}}{\partial {\bU}} \frac{\partial {\bU}}{\partial {\bV}}$ is symmetric,
see Section \ref{sub: van der WaalsES}.

Following \cite{duan2021highorder},
%
by a time dependent
differentiable one-to-one coordinate mapping
\begin{align}\label{eq:transf}
t=\tau,\ \ \bx=\bx(\bm{\xi},\tau)\in \Omega_p,\ \
\bm{\xi}=(\xi_1,\cdots,\xi_d)\in\Omega_c,
\end{align}
where a reference mesh  in the computational domain $\Omega_c$ with coordinates $\bm{\xi}=(\xi_1,\cdots,\xi_d)$  can be viewed as the inverse image of the adaptive moving mesh in the physical domain $\Omega_p$ with coordinates $\bm{x}=(x_1,\cdots,x_d)$,
the system \eqref{eq:ConserLaw} can be transformed into
the following conservative form in curvilinear coordinates 
\begin{align}\label{eq:ConserLaw_curv}
\pd{\left(J\bU\right)}{\tau}+\sum_{k=1}^d\dfrac{\partial}{\partial\xi_k}\left[{\left(J\pd{\xi_k}{t}\bU\right)}
+\sum_{j=1}^d{\left(J\pd{\xi_k}{x_j}\bF_j\right)}\right]=0,
\quad J=\text{det}\left(\frac{\partial(t, \boldsymbol{x})}{\partial(\tau, \boldsymbol{\xi})}\right).
\end{align}
Moreover, for the transformation \eqref{eq:transf}, one should have
the following geometric conservation laws (GCLs) including   the
volume conservation law (VCL) and  the  surface conservation laws (SCLs)
\begin{equation}\label{eq:GCL}
\begin{aligned}
&\text{VCL:}\quad \pd{J}{\tau}+\sum_{k=1}^d\dfrac{\partial}{\partial\xi_k}{\left(J\pd{\xi_k}{t}\right)}=0,\\
&\text{SCLs:}\quad \sum_{k=1}^d\dfrac{\partial}{\partial\xi_k}{\left(J\pd{\xi_k}{x_j}\right)}=0,~ j=1,\cdots,d,
\end{aligned}
\end{equation}
where the VCL  indicates that the volumetric increment of a moving cell is equal to the sum of the changes along the surfaces that enclose the cell,
while the SCLs   imply that the cell volume should be closed by its surfaces \cite{Zhang1993Discrete}.
Utilizing  \eqref{eq:ConserLaw_curv} and the GCLs \eqref{eq:GCL} as shown in \cite{duan2021highorder},  the entropy condition in curvilinear coordinates is given by
\begin{align*}
\pd{\left(J\eta\right)}{\tau}+\sum_{k=1}^d\dfrac{\partial}
{\partial\xi_k}\left[{\left(J\pd{\xi_k}{t}\eta\right)}
+\sum_{j=1}^d{\left(J\pd{\xi_k}{x_j}q_j\right)}\right]\leqslant0,
\end{align*}
where the equality  is established
for the smooth solutions of  \eqref{eq:ConserLaw}, and
the  inequality  holds for the discontinuous solutions in the sense of {distributions}.

\section{Numerical schemes}\label{section:Num}
This section presents  the 3D adaptive moving mesh EC and ES finite difference schemes for the system \eqref{eq:ConserLaw_curv} on the structured  mesh, and  the 1D and 2D schemes can be viewed as the degenerative case which is shown in the appendices in \cite{duan2021highorder}. Dividing
a cuboid computational domain  $\Omega_c: [a_1,b_1]\times[a_2,b_2]\times[a_3,b_3]$
  into a   fixed orthogonal uniform mesh
$\{(\xi_{1,i_1},\xi_{2,i_2},
\xi_{3,i_3})$:
$a_k=\xi_{k,1}<\cdots<\xi_{k,i_k}
<\cdots<\xi_{k,N_k}=b_k$, $k=1,2,3\}$
with the constant mesh step size $\Delta \xi_k=\xi_{k,i_k+1}-\xi_{k,i_k}$,
 the semi-discrete conservative {$2w$}-order ($w\geqslant1$) finite difference schemes
for
\eqref{eq:ConserLaw_curv}
and the first equation in \eqref{eq:GCL} are given by
\begin{align}
\label{eq:semi_U}
&\dfrac{\dd}{\dd \tau}(J\bU)_{\bm{i}}=
-\sum_{k=1}^3\dfrac{1}{\Delta \xi_k}\left(\left(\widehat{\bF}_k\right)_{\bm{i},k,+\frac12}^{2w\rm{th}}-\left(\widehat{\bF}_k\right)_{\bm{i},k,-\frac12}^{2w\rm{th}}\right),
\\
\label{eq:semi_J}
&\dfrac{\dd}{\dd \tau}J_{\bm{i}}=
-\sum_{k=1}^3\dfrac{1}{\Delta \xi_k}\left(\left(\widehat{J\pd{\xi_k}{t}}\right)_{\bm{i},k,+\frac12}^{2w\rm{th}}-\left(\widehat{J\pd{\xi_k}{t}}\right)_{\bm{i},k,-\frac12}^{2w\rm{th}}\right),
\end{align}
where
the index $\bm{i}=(i_1,i_2,i_3)$   denotes the point
$(\xi_{1,i_1}, \xi_{2,i_2}, \xi_{3,i_3})$, the notation $\{\bm{i},k,m\}$ presents that the index $\bm{i}$ increases $m$ along $i_k$-direction, e.g. $\{\bm{i},1,+\frac12\}$ is $(i_1+\frac12, i_2, i_3)$,
$J_{\bm{i}}(\tau)$ and $(J\bU)_{\bm{i}}(\tau)$ approximate respectively   the point values of
$J\left(\tau,\bm{\xi}\right)$ and $(J\bU)(\tau,\bm{\xi})$ at $\bm{i}$,
and
{ $\left(\widehat{\bF}_k\right)_{\bm{i},k,\pm\frac12}^{2w\rm{th}}$}
and {
	$\left(\widehat{J\pd{\xi_k}{t}}\right)_{\bm{i},k,\pm\frac12}^{2w\rm{th}}$} are the numerical fluxes approximating respectively
{$J\pd{\xi_k}{t}\bU+\sum\limits_{j=1}^3 J\pd{\xi_k}{x_j}\bF_j$} and
 $J\pd{\xi_k}{t}$
 at $\{\bm{i}, k, \pm\frac{1}{2}\}$,   $k=1,2,3$, 
see \eqref{eq:ECFlux_curv_2p} and \eqref{eq:GCL_flux} in Section \ref{sec:sufficon} for specific expressions. The SCLs   \eqref{eq:GCL} should also be satisfied in the   discrete form
\begin{equation}\label{eq:SCL_dis}
\sum_{k=1}^3\dfrac{1}{\Delta \xi_k}\left(\left(\widehat{J\pd{\xi_k}{x_j}}\right)_{\bm{i},k,+\frac12}^{2w\rm{th}}-\left(\widehat{J\pd{\xi_k}{x_j}}\right)_{\bm{i},k,-\frac12}^{2w\rm{th}}\right)=0,~j=1,2,3,
\end{equation}
where $\left(\widehat{J\pd{\xi_k}{x_j}}\right)_{\bm{i},k,\pm\frac12}^{2w\rm{th}}$ is given by \eqref{eq:GCL_flux} in Section \ref{sec:sufficon}.


\subsection{Two-point EC fluxes in curvilinear coordinates}\label{sec:sufficon}

%
%
This part proposes two-point symmetric  EC fluxes in curvilinear coordinates  according to the  sufficient condition given in \cite{duan2021highorder}.
	If   a  two-point symmetric flux  $\thF_k\left(\bU_l, \bU_r, \left(J\pd{\xi_k}{\zeta}\right)_l, \left(J\pd{\xi_k}{\zeta}\right)_r \right)$, $\zeta=t,x_1,x_2,x_3$ is consistent with
$J\pd{\xi_k}{t}\bU+\sum\limits_{j=1}^3 J\pd{\xi_k}{x_j}\bF_j$,
chosen as
\begin{align}\label{eq:ECfluxMM}
\thF_k\left(\bU_l,\bU_r,\left(J\pd{\xi_k}{\zeta}\right)_l,\left(J\pd{\xi_k}{\zeta}\right)_r\right)
=&~\dfrac12\left(\left(J\pd{\xi_k}{t}\right)_l + \left(J\pd{\xi_k}{t}\right)_r\right)\widetilde{\bU} \nonumber\\
&+\sum_{j=1}^3\dfrac12\left(\left(J\pd{\xi_k}{x_j}\right)_l + \left(J\pd{\xi_k}{x_j}\right)_r\right)\widetilde{\bF}_j,
\end{align}
with $\widetilde{\bU}=\widetilde{\bU}(\bU_l,\bU_r)=\widetilde{\bU}(\bU_r,\bU_l)$ and $\widetilde{\bF}_j=\widetilde{\bF}_j(\bU_l,\bU_r)=\widetilde{\bF}_j(\bU_r,\bU_l)$  satisfying   
\begin{align}\label{eq:ECCondition_comp}
\left(\bV_r-\bV_l\right)^\mathrm{T}\widetilde{\bU}=\phi(\bU_r) - \phi(\bU_l),\quad
\left(\bV_r-\bV_l\right)^\mathrm{T}\widetilde{\bF}_j=\psi_j(\bU_r) - \psi_j(\bU_l),
\end{align}
then the  semi-discrete  scheme \eqref{eq:semi_U}-\eqref{eq:semi_J} is EC, that is, the solution satisfies the following semi-discrete entropy identity
	\begin{align*}
	\dfrac{\dd}{\dd t}J_{\bm{i}}\eta(\bU_{\bm{i}}(t))
	+\sum_{k=1}^3\dfrac{1}{\Delta \xi_k}\left(\left(\widehat{q}_k\right)_{\bm{i},k,+\frac12}^{2w\rm{th}}(t)-\left(\widehat{q}_k\right)_{\bm{i},k,-\frac12}^{2w\rm{th}}(t)\right)=0,
	\end{align*}
	where  $\bU_l$ and $\bU_r$ represent  the left and right states, respectively, and the numerical entropy flux
	$\left(\widehat{q}_k\right)_{\bm{i},k,+\frac12}^{2w\rm{th}}$ is consistent with the entropy flux
	$J\pd{\xi_k}{t}\eta+\sum\limits_{j=1}^3J\pd{\xi_k}{x_j}q_j$
	but it is not  unique.

The explicit expressions  of  the two-point EC fluxes  $\thF_k$ can be establish under some admissible conditions.  The  notations
\begin{align*}
\mean{a} = (a_r +a_l)/2, \quad \jump{a} = a_r - a_l,
\end{align*}
are employed  for convenience to denote  respectively the arithmetic mean and jump of $a$,
and choose the parameter vector as
\begin{align}
\label{eq:parameter_vector}
\boldsymbol{z}=\left(z_{1}, z_{2}, z_{3}, z_{4}, z_{5}\right)^{\mathrm{T}}=(\rho, \bv^{\mathrm{T}}, {T})^{\mathrm{T}} = (\rho, \bv^{\mathrm{T}}, \dfrac{(3-\rho)(p+3\rho^2)}{8\rho})^{\mathrm{T}}.
\end{align}

Using the identities
\begin{align*}
\jump{ab} = \mean{a}\jump{b} + \jump{b}\mean{a},\quad
\jump{\dfrac{1}{b}} =-\dfrac{\jump{b}}{b_{l} b_{r}},
\quad
\jump{\dfrac{a}{b}}=\dfrac{\jump{ a}}{\mean{b}} - \mean{\dfrac{a}{b}}\dfrac{\jump{b}}{\mean{b}},
\end{align*}
%
gives the decomposition of $\jump{\bV}$ as follows
\begin{equation}
\label{jumpV}
\left\{
\begin{aligned}
\jump{\bV_1}& =  \left(\dfrac{1}{\meanln{z_1}} +\dfrac{1}{\meanln{3-z_1}} + \dfrac{3}{(3-z_{1,L})(3-z_{1,R})} - \frac{9}{4\mean{z_5}}\right)\jump{z_1} \\
&~+
\left(\frac{1}{2}\sum\limits_{m = 2}^{4}\dfrac{1}{\mean{z_5}}\mean{\dfrac{z_m^2}{z_5}} - \dfrac{1}{\delta\meanln{z_5} }+ \dfrac{9}{4}\dfrac{1}{\mean{z_5}} \mean{\dfrac{z_1}{z_5}}\right) \jump{z_5} -\sum\limits_{m = 2}^{4}\dfrac{\mean{z_m}\jump{z_m}}{\mean{z_5}},\\
\jump{\bV_m}& =- \dfrac{1}{\mean{z_5}}\mean{\dfrac{z_m}{z_5}}\jump{z_5} + \dfrac{1}{\mean{z_5}} \jump{z_m}, \quad m = 2,3,4,\\
\jump{\bV_5}&=  \dfrac{1}{\mean{z_5}}\mean{\dfrac{1}{z_5}}\jump{z_5},
\end{aligned}
\right.
\end{equation}
where $\meanln{a}:=\jump{a}/\jump{\ln{a}}, a>0$ is the logarithmic mean, see \cite{Ismail2009Affordable}. Similarly,  $\jump{\phi}$ and $\jump{\psi_1}$ can be expanded as
\begin{equation*}
\label{jumppsi}
\begin{aligned}
\jump{\phi} =& \mean{z_1} \left[-\frac{9}{8}\dfrac{1}{\mean{z_5}}\left(\jump{z_1}- \mean{\dfrac{z_1}{z_5}}\jump{z_5}\right) + \frac{3\jump{z_1}}{(3-z_{1,L})(3-z_{1,R})}
\right] + \jump{z_1}\mean{ -\frac{9}{8} \dfrac{z_1}{z_5} + \frac{3}{3-z_1}}\\
= &\left[ \mean{z_1}\left(-\frac{9}{8}\dfrac{1}{\mean{z_5}} + \frac{3}{(3-z_{1,L})(3-z_{1,R})}\right)
+\mean{ -\frac{9}{8} \dfrac{z_1}{z_5} + \frac{3}{3-z_1}}\right] \jump{z_1} \\
&+\frac{9}{8}\dfrac{\mean{z_1}}{\mean{z_5}}\mean{\dfrac{z_1}{z_5}} \jump{z_5}, \\
\jump{\psi_1} =& \mean{z_1 z_2} \left[-\frac{9}{8}\dfrac{1}{\mean{z_5}}\left(\jump{z_1}- \mean{\dfrac{z_1}{z_5}}\jump{z_5}\right) + \frac{3\jump{z_1}}{(3-z_{1,L})(3-z_{1,R})}
\right]\\
& + \left(\jump{z_1}\mean{z_2} + \jump{z_2}\mean{z_1}\right) \mean{ -\frac{9}{8} \dfrac{z_1}{z_5} + \frac{3}{3-z_1}}\\
= &\left[ \mean{z_1z_2}\left(-\frac{9}{8}\dfrac{1}{\mean{z_5}} + \frac{3}{(3-z_{1,L})(3-z_{1,R})}\right)
+\mean{ -\frac{9}{8} \dfrac{z_1}{z_5} + \frac{3}{3-z_1}}\mean{z_2}\right] \jump{z_1} \\
&+\frac{9}{8\mean{z_5}}\mean{z_1z_2}\mean{\dfrac{z_1}{z_5}} \jump{z_5} + \mean{z_1}\mean{ -\frac{9}{8} {\dfrac{z_1}{z_5}} + \frac{3}{3-z_1}} \jump{z_2}.
\end{aligned}
\end{equation*}
Substituting \eqref{jumpV} and the above identities into two identities in \eqref{eq:ECCondition_comp} for the two-point EC fluxes and
equating the coefficients of the same jump terms on both side of the identities derive
\begin{equation}\label{eq:linear_U}
 \left\{
 \begin{aligned}
&\left(\dfrac{1}{\meanln{z_1}} +\dfrac{1}{\meanln{3-z_1}} + \dfrac{3}{(3-z_{1,L})(3-z_{1,R})} - \frac{9}{4\mean{z_5}}\right) \tbU_{1} \\
&~~= \left( \mean{z_1}\left(-\frac{9}{8}\dfrac{1}{\mean{z_5}} + \frac{3}{(3-z_{1,L})(3-z_{1,R})}\right)
+\mean{ -\frac{9}{8} \dfrac{z_1}{z_5} + \frac{3}{3-z_1}}\right),\\
&-\dfrac{1}{\mean{z_5}}\mean{z_m}\widetilde{\bU}_{1} + \dfrac{1}{\mean{z_5}} \widetilde{\bU}_{m} =
0,\quad m = 2, 3,4,\\
& \left(\frac{1}{2}\sum\limits_{m = 2}^{4}\dfrac{1}{\mean{z_5}}\mean{\dfrac{z_m^2}{z_5}} - \dfrac{1}{\delta\meanln{z_5} }+ \dfrac{9}{4}\dfrac{1}{\mean{z_5}} \mean{\dfrac{z_1}{z_5}}\right) \widetilde{\bU}_{1} - \sum\limits_{m = 2}^{4} \dfrac{1}{\mean{z_5}} \mean{\dfrac{z_m}{z_5}}\widetilde{\bU}_{m}  \\
& + \dfrac{1}{\mean{z_5}}\mean{\dfrac{1}{z_5}}\widetilde{\bU}_{5}  = \frac{9}{8}\dfrac{\mean{z_1}}{\mean{z_5}}\mean{\dfrac{z_1}{z_5}},
\end{aligned}
\right.
\end{equation}
\begin{equation}\label{eq:linear_F}
\quad\quad \left\{
\begin{aligned}
&\left(\dfrac{1}{\meanln{z_1}} +\dfrac{1}{\meanln{3-z_1}} + \dfrac{3}{(3-z_{1,L})(3-z_{1,R})} - \dfrac{9}{4\mean{z_5}}\right) \tbF_{1,1} \\
&~~=  \mean{z_1z_2}\left(-\dfrac{9}{8}\dfrac{1}{\mean{z_5}} + \dfrac{3}{(3-z_{1,L})(3-z_{1,R})}\right)
+\mean{ -\dfrac{9}{8} \dfrac{z_1}{z_5} + \dfrac{3}{3-z_1}}\mean{z_2} ,\\
&-\dfrac{1}{\mean{z_5}}\mean{z_2}\widetilde{\bF}_{1,1} + \dfrac{1}{\mean{z_5}} \widetilde{\bF}_{1,2} =
\mean{z_1}\mean{ -\frac{9}{8} {\dfrac{z_1}{z_5}} + \frac{3}{3-z_1}},\\
&-\dfrac{1}{\mean{z_5}}\mean{z_m}\widetilde{\bF}_{1,1} + \dfrac{1}{\mean{z_5}} \widetilde{\bF}_{1,m} =
0,\quad m =  3,4,\\
& \left(\frac{1}{2}\sum\limits_{m = 2}^{4}\dfrac{1}{\mean{z_5}}\mean{\dfrac{z_m^2}{z_5}} - \dfrac{1}{\delta\meanln{z_5} }+ \dfrac{9}{4}\dfrac{1}{\mean{z_5}} \mean{\dfrac{z_1}{z_5}}\right) \widetilde{\bF}_{1,1} - \sum\limits_{m = 2}^{4} \dfrac{1}{\mean{z_5}} \mean{\dfrac{z_m}{z_5}}\widetilde{\bF}_{1,m}  \\
& + \dfrac{1}{\mean{z_5}}\mean{\dfrac{1}{z_5}}\widetilde{\bF}_{1,5}  = \frac{9\mean{z_1z_2}}{8\mean{z_5}}\mean{\dfrac{z_1}{z_5}}.
\end{aligned}
\right.
\end{equation}
\begin{thm}\label{thm:condition}\rm
	Assuming  that $\bm{z}_l, \bm{z}_r$ satisfy the constraints  in Remark \ref{rem-eq:constraint}  and
	$\mean{\bm{z}}$ satisfies
	\begin{align}\label{eq:mean_constraint}
	4\mean{z_5} - \mean{z_1}\left(3-\mean{z_1}\right)^2 > 0,
	\end{align}
	the linear equations \eqref{eq:linear_U}-\eqref{eq:linear_F} have a unique solution.
\end{thm}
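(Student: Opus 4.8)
The plan is to show that both linear systems \eqref{eq:linear_U} and \eqref{eq:linear_F} are governed by one and the same coefficient matrix, and that this matrix is nonsingular under the stated hypotheses. First I would observe that the unknowns $\widetilde{\bU}=(\widetilde{\bU}_1,\ldots,\widetilde{\bU}_5)$ and $\widetilde{\bF}_1=(\widetilde{\bF}_{1,1},\ldots,\widetilde{\bF}_{1,5})$ enter their respective systems through the same linear operator: the first equation determines the first component alone, each momentum equation ($m=2,3,4$) couples only the first component with the $m$-th, and the energy equation couples all five. Ordering the unknowns as $(\ast_1,\ast_2,\ast_3,\ast_4,\ast_5)$ and the equations in the displayed order renders the coefficient matrix lower triangular, so existence and uniqueness reduce to showing that its determinant, the product of the diagonal entries, is nonzero. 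The three momentum rows contribute diagonal entries $1/\mean{z_5}$ and the energy row contributes $\mean{1/z_5}/\mean{z_5}$; since $z_5=T>0$ on both states, these are strictly positive. Thus the whole argument collapses to the single scalar
\begin{align*}
c_1:=\dfrac{1}{\meanln{z_1}} +\dfrac{1}{\meanln{3-z_1}} + \dfrac{3}{(3-z_{1,L})(3-z_{1,R})} - \frac{9}{4\mean{z_5}},
\end{align*}
the $(1,1)$ entry shared by both systems, which I must show is nonzero.

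The hard part is establishing $c_1>0$, and the crux is that the natural lower bound vanishes identically, so strict positivity hinges entirely on the strict inequality \eqref{eq:mean_constraint}. I would bound the first two terms below using the logarithmic-mean inequality $\meanln{a}\leqslant\mean{a}$ (valid for $a>0$), giving $1/\meanln{z_1}\geqslant 1/\mean{z_1}$ and $1/\meanln{3-z_1}\geqslant 1/(3-\mean{z_1})$, and bound the third term below by the AM--GM estimate $(3-z_{1,L})(3-z_{1,R})\leqslant(3-\mean{z_1})^2$. For the last term I would invoke \eqref{eq:mean_constraint} in the form $\mean{z_5}>\mean{z_1}(3-\mean{z_1})^2/4$ to obtain the \emph{strict} bound $9/(4\mean{z_5})<9/\big(\mean{z_1}(3-\mean{z_1})^2\big)$. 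Writing $\bar\rho:=\mean{z_1}\in(0,3)$ and combining these estimates yields
\begin{align*}
c_1 > \frac{1}{\bar\rho}+\frac{1}{3-\bar\rho}+\frac{3}{(3-\bar\rho)^2}-\frac{9}{\bar\rho(3-\bar\rho)^2}.
\end{align*}
Putting the right-hand side over the common denominator $\bar\rho(3-\bar\rho)^2$, the numerator is $(3-\bar\rho)^2+\bar\rho(3-\bar\rho)+3\bar\rho-9$, which expands identically to $0$; hence $c_1>0$.

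Finally I would assemble the pieces: the determinant of the shared coefficient matrix equals $c_1\,\mean{1/z_5}/\mean{z_5}^{4}$, a product of strictly positive factors, hence nonzero. Consequently each of \eqref{eq:linear_U} and \eqref{eq:linear_F} admits a unique solution, as claimed. I expect the only delicate point to be the exact cancellation of the lower bound for $c_1$ to zero; once that is verified, the strictness furnished by \eqref{eq:mean_constraint} closes the argument, while the remaining steps (the triangular structure and the positivity of the thermodynamic means) are routine given that $\bm{z}_l,\bm{z}_r$ and $\mean{\bm{z}}$ obey the constraints of Remark~\ref{rem-eq:constraint} together with \eqref{eq:mean_constraint}.
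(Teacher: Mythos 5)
Your proposal is correct and follows essentially the same route as the paper: reduce to the nonvanishing of the shared $(1,1)$ entry $D$ via the (lower triangular) structure of the coefficient matrix, bound the logarithmic means by arithmetic means and $(3-z_{1,L})(3-z_{1,R})$ by $\mean{3-z_1}^2$, and use the algebraic identity $\tfrac{1}{\bar\rho}+\tfrac{1}{3-\bar\rho}+\tfrac{3}{(3-\bar\rho)^2}=\tfrac{9}{\bar\rho(3-\bar\rho)^2}$ together with the strict inequality \eqref{eq:mean_constraint} to conclude $D>0$. The only cosmetic difference is that you substitute the bound on $\mean{z_5}$ before observing the exact cancellation, whereas the paper keeps the $-9/(4\mean{z_5})$ term and factors the result as $\tfrac{9}{4\mean{z_1}\mean{3-z_1}^2\mean{z_5}}\left(4\mean{z_5}-\mean{z_1}\mean{3-z_1}^2\right)$; the two computations are identical in substance.
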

\begin{proof}
		According to Remark \ref{rem-eq:constraint}, one can  easily deduce
		\begin{align}
		\label{eq:basis_cons}
	3 > \mean{z_1} > 0,\ \mean{z_5}  > \dfrac{27}{32} > 0, \ \mean{\dfrac{1}{z_5}}  =  \dfrac{z_{5,l}+z_{5,r}}{2z_{5,l}z_{5,r}} > 0.
	\end{align}
	The determinant of the coefficient matrix of
	the linear equations \eqref{eq:linear_U}-\eqref{eq:linear_F} is $D  \dfrac{2}{\mean{z_5}^4}\mean{\dfrac{1}{z_5}}$ with
	$$D :=\dfrac{1}{\meanln{z_1}} +\dfrac{1}{\meanln{3-z_1}} + \dfrac{3}{(3-z_{1,l})(3-z_{1,r})} - \dfrac{9}{4\mean{z_5}}.$$
	 The system  \eqref{eq:linear_U}-\eqref{eq:linear_F}
		has a unique solution only if the determinant of the coefficient matrix
	 is not equal to zero, i.e $D\neq 0$, based on the conditions \eqref{eq:basis_cons}. Because $3>\mean{z_1}>0$, $\meanln{z_1}$ and $\meanln{3-z_1}$ are well-defined.
Using $\mean{z_1} \geqslant \meanln{z_1} >0,
\mean{3-z_1} \geqslant \meanln{3-z_1} >0$,
  $\mean{3-z_1}^2 \geqslant (3-z_{1,l})(3-z_{1,r}) >0$ and the constraints \eqref{eq:mean_constraint} can derive the following inequality
	\begin{align*}
	D 
	&\geqslant \dfrac{1}{\mean{z_1}} +\dfrac{1}{\mean{3-z_1}} + \dfrac{3}{\mean{3-z_{1}}^2} - \dfrac{9}{4\mean{z_5}}\\
	&= \dfrac{9}{\mean{z_1}\mean{3-z_1}^2} - \dfrac{9}{4\mean{z_5}}\\
	& = \dfrac{9}{4\mean{z_1}\mean{3-z_1}^2\mean{z_5}}\left(4\mean{z_5}-\mean{z_1}\mean{3-z_1}^2\right) > 0.
	\end{align*}
	Then the proof is completed.
	\end{proof}
\begin{rmk}\rm
If  $\bm{z}_l, \bm{z}_r$ satisfy the conditions in Remark  \ref{rem-eq:constraint} and $\mean{z_5}  = \mean{T} > T_c$, the inequality \eqref{eq:mean_constraint} is obviously satisfied since $f(\mean{z_1}) = \mean{z_1}(3-\mean{z_1})^2, \mean{z_1}\in(0,3)$
achieves the maximum $4$ at $\mean{z_1} = 1$.
	\hfill$\Box$
\end{rmk}
\begin{rmk}\rm
	Because the van der waals EOS is only used to describe van der Waals gas in this paper,
	  $\nu > \nu_N$ if $T_0<T < T_c$,  as shown in Figure \ref{fig:phase_diagram}.
	  Thus,
	when $T_0<\mean{T} = T_l = T_r < T_c$   and  $\bm{z}_l, \bm{z}_r$ satisfy the constraints  in Remark  \ref{rem-eq:constraint},   $\rho_{l} < 1/\nu_N(\mean{T})$ and $\rho_r < 1/\nu_N(\mean{T})$, so that $\mean{\rho} < 1/\nu_N(\mean{T})$,  i.e. the inequality \eqref{eq:mean_constraint} is
	satisfied. The numerical tests used in Section \ref{section:Result} are consistent with the conditions given by the Theorem \ref{thm:condition}.
		\hfill$\Box$
	\end{rmk}

One can also similarly obtain different linear systems about $\widetilde{\bU}$ and $\widetilde{\bF}_1$ by using other parameter vectors, which are shown in \ref{section:DiffLinearSysm}.  However,  the existence and uniqueness of  the linear system  about $\widetilde{\bU}$ and $\widetilde{\bF}_1$  
 is the critical factor to deduce the explicit expression of the two-point symmetric EC flux, which is not trivial for the compressible Euler equations with the van der Waals EOS. Under the conditions given by Theorem \ref{thm:condition}, the system \eqref{eq:linear_U}-\eqref{eq:linear_F} has a unique solution, and the  explicit expressions of the solutions  $\widetilde{\bU}$ and $\widetilde{\bF}_1$ are given as follows
\begin{equation*}
\left\{
\begin{aligned}
& \widetilde{\bU}_{1} = \dfrac{   \mean{z_1}\left(-\dfrac{9}{8}\dfrac{1}{\mean{z_5}} + \dfrac{3}{(3-z_{1,L})(3-z_{1,R})}\right)
	+\mean{ -\dfrac{9}{8} \dfrac{z_1}{z_5} + \dfrac{3}{3-z_1}}} {\dfrac{1}{\meanln{z_1}} +\dfrac{1}{\meanln{3-z_1}} + \dfrac{3}{(3-z_{1,L})(3-z_{1,R})} - \dfrac{9}{4\mean{z_5}}},\\
& \widetilde{\bU}_{m} =  \mean{z_m}\widetilde{\bU}_{1}, \quad m = 2,3,4,\\
& \widetilde{\bU}_{5}  = \dfrac{1}{\mean{{1}/{z_5}}}\left(
-\left(\frac{1}{2}\sum\limits_{m = 2}^{4}\mean{\dfrac{z_m^2}{z_5}} - \dfrac{\mean{z_5}}{\delta\meanln{z_5} }+ \dfrac{9}{4} \mean{\dfrac{z_1}{z_5}}\right) \widetilde{\bU}_{1} + \sum\limits_{m = 2}^{4}  \mean{\dfrac{z_m}{z_5}}\widetilde{\bU}_{m} +
\frac{9}{8}{\mean{z_1}}\mean{\dfrac{z_1}{z_5}}
\right),
\end{aligned}
\right.
\end{equation*}
\begin{equation*}
\left\{
\begin{aligned}
& \widetilde{\bF}_{1,1} = \dfrac{  \mean{z_1z_2}\left(-\dfrac{9}{8}\dfrac{1}{\mean{z_5}} + \dfrac{3}{(3-z_{1,L})(3-z_{1,R})}\right)
	+\mean{ -\dfrac{9}{8} \dfrac{z_1}{z_5} + \dfrac{3}{3-z_1}}\mean{z_2}}
{\dfrac{1}{\meanln{z_1}} +\dfrac{1}{\meanln{3-z_1}} + \dfrac{3}{(3-z_{1,L})(3-z_{1,R})} - \dfrac{9}{4\mean{z_5}}},\\
& \widetilde{\bF}_{1,2} =
{\mean{z_1}}{\mean{z_5}}\mean{ -\frac{9}{8} \dfrac{z_1}{z_5} + \dfrac{3}{3-z_1}} +\mean{z_2}\widetilde{\bF}_{1,1}, \\
& \widetilde{\bF}_{1,m} =  \mean{z_m}\widetilde{\bF}_{1,1}, \quad m = 3,4,\\
&
\widetilde{\bF}_{1,5}  = \dfrac{1}{\mean{{1}/{z_5}}}
\left(-
\left(\frac{1}{2}\sum\limits_{m = 2}^{4}\mean{\dfrac{z_m^2}{z_5}} - \dfrac{\mean{z_5}}{\delta\meanln{z_5} }+ \dfrac{9}{4} \mean{\dfrac{z_1}{z_5}}\right) \widetilde{\bF}_{1,1} + \sum\limits_{m = 2}^{4}  \mean{\dfrac{z_m}{z_5}}\widetilde{\bF}_{1,m}
+\frac{9\mean{z_1z_2}}{8}\mean{\dfrac{z_1}{z_5}}
\right).
\end{aligned}
\right.
\end{equation*}
Similarly, $\tbF_{k}$ may be derived for $k = 2,3$.
\begin{rmk}\rm
	It can be easily verified that the above two-point EC flux  is consistent with $J\pd{\xi_k}{t}\bU+\sum\limits_{j=1}^3 J\pd{\xi_k}{x_j}\bF_j$ by  proving $\widetilde{\bU}$ and $\widetilde{\bF}_k$ are consistent with $\bU$ and $\bF_k$, respectively.
	If  $\left(\rho_l, \bv_l^{\mathrm{T}},p_l\right)^{\mathrm{T}} = \left(\rho_r, \bv_r^{\mathrm{T}}, p_r\right)^{\mathrm{T}} =\left(\rho, \bv^{\mathrm{T}}, p\right)^{\mathrm{T}} $, then
	\begin{align*}
	\widetilde{\bF}_{1,1}&= \dfrac{ \rho v_1\left(-\dfrac{9\rho}{(p+3\rho^2)(3-\rho)}+ \dfrac{3}{(3-\rho)^2}\right)
		+\left(  -\dfrac{9\rho}{(p+3\rho^2)(3-\rho)}\rho+ \dfrac{3}{(3-\rho)}\right)v_1}{\dfrac{1}{\rho} + \dfrac{1}{3-\rho} + \dfrac{3}{(3-\rho)^2} - \dfrac{18\rho}{(p+3\rho^2)(3-\rho)}}\\
	&= \dfrac{ \rho v_1\left(-\dfrac{18\rho}{(p+3\rho^2)(3-\rho)}+ \dfrac{3}{(3-\rho)^2}\right)
		+ \dfrac{3v_1}{(3-\rho)}}{   - \dfrac{18\rho}{(p+3\rho^2)(3-\rho)} \dfrac{3}{(3-\rho)^2} +  \dfrac{3}{\rho(3-\rho)}  }
	= \rho v_1   
{= {\bF}_{1,1} },\\
	\widetilde{\bF}_{1,2} &=
	\dfrac{(p+3\rho^2)(3-\rho)}{8}\left( -\frac{9\rho^2}{(p+3\rho^2)(3-\rho)} + \frac{3}{3-\rho}\right)+\rho v_1^2 = \rho v_1^2 + \frac{3}{8}p 
{= {\bF}_{1,2} }, \\
	\widetilde{\bF}_{1,3} &= \rho v_1 v_2  
{= {\bF}_{1,3}}, \quad 	\widetilde{\bF}_{1,4} = \rho v_1 v_3
{ ={\bF}_{1,4}},  \\
	\widetilde{\bF}_{1,5} &= \left(-\frac{1}{2}\abs{\bv}^2 + \dfrac{(3-\rho)(p+3\rho^2)}{8\delta\rho }- \frac{9}{4}\rho\right) \rho v_1 + v_1\left(\rho \abs{\bv}^2 + \frac{3}{8}p\right) + \frac{9}{8}\rho^2 v_1 = v_1\left(E+\frac{3}{8}p\right)
{= {\bF}_{1,5},}
	\end{align*}
	and the consistency of
	$\widetilde{\bU}$ and  $\widetilde{\bF}_k, k = 2,3$, can be similarly deduced.
	\hfill$\Box$
\end{rmk}
Using the linear combination of the two point EC fluxes \eqref{eq:ECfluxMM}, the semi-discrete  $2w$th-order EC schemes \eqref{eq:semi_U}-\eqref{eq:semi_J} can be constructed with the following $2w$th-order EC  fluxes  according to \cite{duan2021highorder}
		\begin{align}\label{eq:ECFlux_curv_2p}
	&{\left(\thF_k\right)}_{\bm{i},k,+\frac12}^{{2w\rm{th}}}
	=~\sum_{m=1}^w\alpha_{w,m}\sum_{s=0}^{m-1}\thF_k\left(\bU_{\bm{i},k,-s}, \bU_{\bm{i},k,-s+m},
	\left(J\pd{\xi_k}{\zeta}\right)_{\bm{i},k,-s}, \left(J\pd{\xi_k}{\zeta}\right)_{\bm{i},k,-s+m}\right),\\
	\label{eq:GCL_flux}
	&\left(\widehat{J\pd{\xi_k}{\zeta}}\right)_{\bm{i},k,+\frac12}^{2w\rm{th}}
	=\sum_{m=1}^w\alpha_{w,m}\sum_{s=0}^{m-1}\dfrac12\left(\left(J\pd{\xi_k}{\zeta}\right)_{\bm{i},k,-s}
	+\left(J\pd{\xi_k}{\zeta}\right)_{\bm{i},k,-s+m}\right) ,\quad\zeta=t,x_1,x_2,x_3,
	\end{align}
 where
	the constants $\{\alpha_{w,m}\}$  satisfy the following conditions  \cite{Lefloch2002Fully}
	$$
	\sum\limits_{m=1}^{w} m\alpha_{w, m}=1, \quad \sum\limits_{m=1}^{w} m^{2 s-1} \alpha_{w, m}=0, \
 \ s=2, \ldots, w.
	$$

\subsection{Discrete GCLs}\label{subsection:GCLs}
 This section constructs  {the} discrete GCLs, which need to be fulfiled, otherwise they  may lead to a misrepresentation of the convective velocities and extra sources or sinks in the physically conservative media  \cite{Zhang1993Discrete}. Moreover, the discrete GCLs play an important role in constructing the semi-discrete  EC/ES schemes \eqref{eq:semi_U}-\eqref{eq:semi_J} satisfying the semi-discrete entropy conditions.
Utilizing the $2w$th-order accurate discretizations for $J\pd{\xi_k}{x_j},~ k,j = 1,2,3,$ given in \cite{duan2021highorder},
for example,
\begin{equation}\label{eq:SCLCoeff}
\begin{aligned}
&\left(J\pd{\xi_1}{x_1}\right)_{\bm{i}}
=\left(\pd{x_2}{\xi_2}\pd{x_3}{\xi_3}-\pd{x_2}{\xi_3}\pd{x_3}{\xi_2}\right)_{\bm{i}}
=\dfrac{1}{\Delta\xi_2\Delta\xi_3}
\left(\delta_3\left[\delta_2\left[x_2\right]x_3\right]-\delta_2\left[\delta_3\left[x_2\right]x_3\right]\right)_{\bm{i}},
\end{aligned}
\end{equation}
with the $2w$th-order central difference operator  in the $\xi_k$-direction
\begin{align*}
\delta_k[a_{\bm{i}}]=\dfrac12\sum_{m=1}^w\alpha_{w,m}\left(a_{\bm{i},k,+m} - a_{\bm{i},k,-m}\right).
\end{align*}
and
combining \eqref{eq:SCLCoeff} with the $2w$th-order discretizations of the fluxes in \eqref{eq:GCL_flux}  can easily  obtain the discrete SCLs \eqref{eq:SCL_dis}.

Due to the transformation \eqref{eq:transf}, the metrics $J\pd{\xi_k}{t}$ can be efficiently  approximated  by
\begin{equation}\label{eq:VCLCoeff}
\left(J\pd{\xi_k}{t}\right)_{\bm{i}}=-\sum_{j=1}^3(\dot{x}_j)_{\bm{i}}\left(J\pd{\xi_k}{x_j}\right)_{\bm{i}},~k=1,2,3,
\end{equation}
with $\left(J\pd{\xi_k}{x_j}\right)_{\bm{i}}$ calculated by \eqref{eq:SCLCoeff},
 and the mesh velocities $(\dot{x}_j)_{\bm{i}}$, $j=1,2,3$,  determined by the adaptive moving mesh strategy shown in Section \ref{section:MM}.   Combining them with the  fluxes
 \eqref{eq:GCL_flux} can obtain the high-order semi-discrete VCL \eqref{eq:semi_J}.

At the end of this section, the steps of  deriving the   $2w$th-order semi-discrete EC scheme \eqref{eq:semi_U}-\eqref{eq:semi_J}  in  curvilinear coordinates are  summarized  as follows.
\begin{itemize}
	\item[1.]   The two-point EC flux \eqref{eq:ECfluxMM} in  curvilinear coordinates is first explicitly derived based on the given entropy pair and carefully chosen parameter vector. 
	\item[2.] Construct the $2w$th-order accurate discretizations for $\left(J\pd{\xi_k}{x_j}\right)_{\bm{i}}$ using \eqref{eq:SCLCoeff} so that the discrete SCLs hold.
 Combining $\left(J\pd{\xi_k}{x_j}\right)_{\bm{i}}$ with the mesh velocities $(\dot{x}_j)_{\bm{i}}$,  the $2w$th-order accurate discretizations for $\left(J\pd{\xi_k}{t}\right)_{\bm{i}}$ are given by \eqref{eq:VCLCoeff}.
	 \item[3.] Compute $\left(\widehat{J\pd{\xi_k}{\zeta}}\right)_{\bm{i},k,+\frac12}^{2w\rm{th}}$ by  using the linear combinations \eqref{eq:GCL_flux}, which can get $2w$th-order accurate semi-discrete VCL, and similarly obtain the $2w$th-order accurate EC flux ${\left(\thF_k\right)}_{\bm{i},k,+\frac12}^{{2w\rm{th}}}$ by combining with the  linear combination of two-point EC flux \eqref{eq:ECFlux_curv_2p}.
	
\end{itemize}

\subsection{ES schemes}\label{sub: van der WaalsES}
For the quasi-linear  hyperbolic conservation laws,
 the entropy identity holds only when the solution is smooth
  {, implying }that EC schemes work well only for smooth solutions.
  
 In order to capture the discontinuous solutions and 
 to suppress the numerical oscillations which may produce by the EC schemes near the discontinuities,
  the high-order accurate ES schemes satisfied the semi-discrete entropy inequality for the given entropy pair should be constructed. 
%
  Following \cite{duan2021highorder},
  the following flux  given by adding  suitable dissipation term  to the EC flux is ES
\begin{equation}\label{eq:HOstable}
\left({{\thF}_k}\right)_{\bm{i}, k,+\frac12}^{\nu{\rm th}}
=\left({\thF_k}\right)_{\bm{i}, k,+\frac12}^{2w{\rm th}}
-\dfrac12\left(\Lambda \bT^{-1}\bm{R}(\bT\bU)\bm{Y}
\right)_{\bm{i}, k,+\frac12}
\jump{\bm{W}}_{\bm{i}, k,+\frac12}^{\tt {WENOMR}},
\end{equation}
where
$\Lambda:=\max\limits_{m}\left\{ \left| J\pd{\xi_k}{t}+L_k\lambda_m(\bT\bU) \right|\right\}
$ with
  $L_{k}=\sqrt{\sum\limits_{j=1}^{3}\left(J \frac{\partial \xi_{k}}{\partial x_{j}}\right)^{2}}$,   $\lambda_m$ is the eigenvalues of the matrix $\pd{\bF_1}{\bU}$,
  the rotational matrix $\bT$ is defined by
	\begin{align*}
	&\bT =
	\begin{bmatrix}
	1 & 0                      & 0                      & 0           & 0 \\
	0 & \cos\varphi\cos\theta  & \cos\varphi\sin\theta  & \sin\varphi & 0 \\
	0 & -\sin\theta            & \cos\theta             & 0           & 0 \\
	0 & -\sin\varphi\cos\theta & -\sin\varphi\sin\theta & \cos\varphi & 0 \\
	0 & 0                      & 0                      & 0           & 1
	\end{bmatrix},\\
&\theta = \arctan\left(\left(J\pd{\xi_k}{x_2}\right)\Big/\left(J\pd{\xi_k}{x_1}\right)\right),\\
&\varphi = \arctan\left(\left(J\pd{\xi_k}{x_3}\right)\Bigg/\sqrt{\left(J\pd{\xi_k}{x_1}\right)^2
+\left(J\pd{\xi_k}{x_2}\right)^2}\right),
\end{align*}
and $\bm{R}$ is the scaled right eigenvector matrix  corresponding to $\pd{\bF_1}{\bU}$, satisfying
\begin{equation}\label{eq:eigen}
\pd{\bU}{\bV}=\bm{R}\bm{R}^\mathrm{T},\quad
\pd{\bF_1}{\bU}=\bm{R}\bm{\Lambda}\bm{R}^{-1}.
\end{equation}
 Let us derive the explicit expression of  $\bm{R}$.  The Jacobian $\pd{\bF_1}{\bU} $ is given  as follows
$$
\left[\begin{array}{ccccc}
0 & 1 &0 &0& 0 \\
c_{s}^2-v_1^2 - \frac{3p_e}{8 \rho} (H - |\bv|^2)& 2v_1  -  \frac{3p_e}{8 \rho} v_1& -  \frac{3p_e}{8 \rho} v_2 &-  \frac{3p_e}{8 \rho} v_3 & \frac{3p_e}{8 \rho} \\
-v_1 v_2& v_2&v_1 &0 & 0\\
-v_1 v_3& v_3  &0&v_1 &0\\
{v_1(c_s^2 - H) - \frac{3p_e}{8 \rho} v_1{(H - |\bv|^2) }}& H - v_1^2\frac{3p_e}{8 \rho}  & -\frac{3p_e}{8 \rho} v_1v_2& -\frac{3p_e}{8 \rho} v_1v_3& v_1 + \frac{3p_e}{8 \rho} v_1\\
\end{array}\right],
$$
where  $H = {(E+3/8p)}/{\rho}$ is  the total enthalpy, and $p_e$ denotes the partial derivative of $p(\rho, e)$ with respect to $e$, i.e. $p_e = \dfrac{8\delta \rho}{3-\rho}.$ The eigenvalues $\lambda_{m}, m = 1, \cdots, 5$ of matrix $\pd{\bF_1}{\bU}$ are given by
$$\left\{ v_1 - c_s,  v_1, v_1,v_1,  v_1+c_s\right\}, $$
and the right eigenvector matrix
$$
\widetilde{\bm{R}}_1 =
\left[\begin{array}{ccccc}
1 & 1 &0 &0& 1 \\
v_1-c_{s} & v_1 & 0 &0 & v_1+c_{s}\\
v_2& v_2& 1&0 &v_2\\
v_3 &v_3     &0&1     &v_3\\
{{H - v_1 c_s }}&  {H- \dfrac{8c_s^2\rho}{3p_e}} & v_2&v_3& { {H + v_1 c_s}}\\
\end{array}\right],
$$
 satisfies $\dfrac{\partial \bU}{ \partial \bV} = \bm{R}\bm{R}^\mathrm{T}=\widetilde{\bm{R}}_1 \bm{S}\bm{S}^{\rm{T}}\widetilde{\bm{R}}_1^{\rm{T}} $ with  the diagonal matrix $\bm{S}.$  Under the assumption that the numerical solution satisfies the conditions in Remark \eqref{rem-eq:constraint}, $\bm{S}$  can be expressed after calculation as follows
 $$
 \left[\begin{array}{ccccc}
 \dfrac{\rho T}{2 c_s^2}& 0 & 0&0 &0\\
 0 & \dfrac{4\delta \rho T^2}{(4T-\rho(3-\rho)^2 )c_s^2}& 0 &0&0\\
 0 & 0 & {\rho T }& 0&0\\
 0 & 0&0 & {\rho T }& 0\\
 0 & 0&0 & 0&	\dfrac{\rho T}{2 c_s^2}
 \end{array}\right]^{\frac{1}{2}},
 $$
so that  the scaled right eigenvector matrix $\bm{R}$ may be taken as
$\widetilde{\bm{R}}\bm{S}.$
The values of $\bm{R}_{\bm{i}, k, +\frac{1}{2}} $ and $\Lambda_{\bm{i}, k, +\frac{1}{2}}$ are
calculated by  some averaged values of the primitive variables in the numerical tests such as
\begin{align*}
\overline{\rho} = \meanln{\rho}_{\bm{i},k,+\frac{1}{2}}, ~ \overline{\bv} = \mean{\bv}_{\bm{i},k,+\frac{1}{2}}, ~ \overline{p}  = \left(\dfrac{ \meanln{\rho}}{ \meanln{\rho/p}}\right)_{\bm{i},k,+\frac{1}{2}}.
\end{align*}
To achieve  high-order accuracy, the fifth-order multi-resolution WENO  reconstruction   \cite{WANG2021105138} is applied to the scaled entropy variables $\bm{W} = \bm{R}^{\rm{T}}(\bm{TU})\bm{TV}$ to obtain the
 left and right limit values $\bm{W}_{\bm{i},k,+\frac12}^{\tt {WENOMR},-}$ and  $\bm{W}_{\bm{i},k,+\frac12}^{\tt {WENOMR},+}$ in the jump terms $\jump{\bm{W}}_{\bm{i}, k, +\frac12}^{\tt {WENOMR}} := \bm{W}_{\bm{i},k,+\frac12}^{\tt {WENOMR},+} -
 \bm{W}_{\bm{i},k,+\frac12}^{\tt {WENOMR},-}
 $  in \eqref{eq:HOstable}.
Due to  the fact that multi-resolution WENO reconstruction   may not  preserve the ``sign''  property  proposed in \cite{Biswas2018Low},
the diagonal matrix $\bm{Y}_{\bm{i},k,+\frac12}$ is  chosen as the following form
\begin{align*}
&\left(\bm{Y}_{m,m}\right)_{\bm{i},k,+\frac12} =
\left\{\begin{array}{ll}1, & \text{if} ~ {\rm{sign}}\left(\jump{\bm{W}_m}_{\bm{i}, k, \pm\frac12}^{\tt {WENOMR}} \right)
	{\rm{sign}}\left(
	\jump{\bm{W}_m}_{\bm{i}, k, \pm\frac12}
	\right)
	>0, \\ 0, & \text {otherwise},\end{array}\right.
\end{align*}
to make sure the flux given in \eqref{eq:HOstable} is ES.
\begin{rmk}\rm
		The free-stream condition (if the
initial value is constant,  so is the solution of the numerical scheme at following instant)  is an important property that should be met to prevent
		large errors and even numerical instabilities  \cite{VISBAL2002155}. According to \cite{duan2021highorder},   it is easy to prove that our
		high-order accurate fully-discrete adaptive moving mesh
		EC/ES finite difference  schemes  satisfy
		the free-stream condition with the third-order accurate explicit SSP-RK schemes \cite{Gottlieb2001Strong} for time discretization.
		\hfill$\Box$
\end{rmk}

\section{Adaptive moving mesh strategy}\label{section:MM}
This section introduces briefly  the adaptive moving mesh strategy  mentioned in \cite{duan2021highorder} 
for integrity, and especially concentrates on the selection of   monitor function which decides the way how mesh clusters. Unless otherwise stated, the dependence of the variables on $t = \tau$ from transformation \eqref{eq:transf}  will be omitted. Following  the Winslow variable diffusion method \cite{Winslow1967Numerical}, the adaptive  mesh redistribution is implemented by
iteratively solving the Euler-Lagrange equations of  mesh adaption functional
\begin{equation}\label{eq:mesh_EL}
\nabla_{\bm{\xi}}\cdot\left(\omega\nabla_{\bm{\xi}}x_k\right)=0,
~\bm{\xi}\in\Omega_c,~k=1,2,3,
\end{equation}
where the  positive monitor function  $\omega$ is related to some
physical variables  controlling the location of the mesh points, taken as
\begin{align}\label{eq:van der Waalsmonitor}
\omega=\left(1+\sum_{k=1}^{\kappa}\alpha_k\dfrac{
	{\abs{\nabla_{\bm{\xi}}\delta_k}^2}}{{\max\abs{\nabla_{\bm{\xi}}\delta_k}^2}} + \alpha_{\kappa+1}\dfrac{
	{\abs{\Delta_{\bm{\xi}}G}^2}}{{\max\abs{\Delta_{\bm{\xi}}G}^2}}
 \right)^{1/2},
\end{align}
with {the}  positive constant parameter $\alpha_k$, the physical variable $\delta_k$, the number of physical variables $\kappa$,  and the  vector $\left(\frac{\partial^2 a}{\partial \xi_1^2},\cdots,\frac{\partial^2 a}{\partial \xi_d^2}\right)$ denoted by $\Delta_{\bm{\xi}}a$  in this paper.
 The appearance of the non-classical waves is often   related to the sign of
 the fundamental derivative
 $G$, see Example \ref{ex:1DRiemann }. 
{Therefore, for the van der Waals gas, the fundamental derivative
	$G$  should be considered in the construction of
	 $\omega$  to enhance the capability of capturing non-classical waves.}
{\color{black}Using the  }second-order central difference scheme and the Jacobi iteration method,
the mesh equations \eqref{eq:mesh_EL} are approximated   by
\begin{equation*}
\begin{aligned}
&\sum\limits_{k =1}^{d} \left[\omega_{\bm{i}, k, +\frac12 }\left(\bx_{\bm{i}, k,1}^{[\nu]}-\bx_{\bm{i}}^{[\nu+1]}\right)
-\omega_{\bm{i}, k, -\frac12}\left(\bx_{\bm{i}}^{[\nu+1]}-\bx_{\bm{i}, k, -1}^{[\nu]}\right)\right] / \Delta{\xi_k^2}=0, \ \nu=0,1,\cdots,\mu,
\end{aligned}
\end{equation*}
where ${\bx}^{[0]}_{\bm{i}}
:=\bx^n_{\bm{i}}$, and
$\omega_{\bm{i}, k, \pm\frac12 } :=\frac12\left(
\omega_{\bm{i},k}+\omega_{\bm{i},k,\pm1}
\right).$ 	The iteration stops when  $\abs{\bm{x}^{[\nu]}  - \bm{x}^{[\nu+1]}} < \epsilon$ or $\nu > \mu$, where $\epsilon$ is the tolerable error.  The total iteration number $\mu$ is taken as $10$ in our numerical tests, unless otherwise stated.  Moreover, the following low pass filter
\begin{align*}
\omega_{i_1,i_2,i_3}\leftarrow&\sum_{j_1,j_2,j_3=0,\pm 1}\left(\dfrac{1}{2}\right)^{\abs{j_1}+\abs{j_2}+\abs{j_3}+3}
\omega_{i_1+j_1,i_2+j_2,i_3+j_3},
\end{align*}
is used  to smooth monitor function $3\sim 10$ times in this work.

The final adaptive mesh  at $t^{n+1}$ is obtained with the limiter of the movement of mesh points ${\Delta_\gamma}$ as follows
\begin{align*}
&\quad\bx^{n+1}_{\bm{i}}
:=\bx^n_{\bm{i}}
+  
{\Delta_\gamma}   (\delta{\bx})^{n}_{\bm{i}},\quad
(\delta {\bx})^{n}_{\bm{i}}:=
{\bx}^{[\mu]}_{\bm{i}}
-\bx^n_{\bm{i}},\\
&{\Delta_\gamma}\leqslant
\begin{cases}
-\frac{1}{2(\delta{x_k})_{\bm{i}}}\left[(x_k)^n_{\bm{i}}-(x_k)^n_{\bm{i}, k, -1}\right],
~ (\delta{x_k})_{\bm{i}}<0, \\
\quad\frac{1}{2(\delta{x_k})_{\bm{i}}}\left[(x_k)^n_{\bm{i}, k, +1}-(x_k)^n_{\bm{i}}\right],
~ (\delta{x_k})_{\bm{i}}>0,
\end{cases}
k = 1,2,3.
\end{align*}
Finally, the mesh velocity at $t=t_n$ in \eqref{eq:VCLCoeff} is determined by
$
\dot{\bx}^{n}_{\bm{i}}:=
{\Delta_\gamma}   (\delta {\bx})^{n}_{\bm{i}}/\Delta t_n
$
with the time step size $\Delta t_n$, obtained by \eqref{eq:cfl}.

\section{Numerical results}\label{section:Result}
This section provides some  numerical tests  for the dense gas with $\gamma = 1.0125$ to demonstrate the accuracy and effectiveness of our
fifth-order accurate adaptive moving mesh ES schemes.  
For convenience,
the fifth-order accurate   ES schemes with multi-resolution WENO on the  moving mesh are denoted by ``{\tt MM}", while those on the fixed uniform mesh are denoted by ``{\tt UM}".
%
In addition, our adaptive moving mesh ES  schemes, using the monitor function with or without the information of fundamental derivative, are employed for the 1D and 2D  cases  except for the accuracy tests to  illustrate the ability
	to capture the classical and non-classical waves.
 The corresponding scheme without the information of fundamental derivative in the monitor function is denoted by ``{\tt MM1}". 
The CPU nodes of the High-performance Computing Platform of Peking University (Linux redhat environment, two Intel Xeon E5-2697A V4  per node, and core frequency of 2.6GHz) are used for the computation,  and the schemes are implemented in parallel combining  the MPI parts of the PLUTO code \cite{2007PLUTO} with the   explicit third-order
SSP-RK   time discretization \cite{Gottlieb2001Strong}. The time step size $\Delta t_{n}$ is determined by the CFL condition
\begin{align} \label{eq:cfl}
\Delta t_{n} \leqslant \frac{\mathrm{CFL}}{\max\limits_{\bm{i}}\left\{\sum_{k =1}^{d}\varrho_{\bm{i}, k}^{n} / \Delta \xi_{k} \right\}},
\end{align}
where $\varrho_{\bm{i}, k}$  is the spectral radius of the eigen-matrix in the $\xi_k$-direction. Unless particularly  stated,
 CFL  is taken as $0.4$
  {for the case of $d= 1$ and $d =2$}, and $0.3$   {for $d= 3$} in numerical experiments.   The linear weights in multi-resolution WENO reconstruction are taken as $\lambda_0 = 0.95, \lambda_1 = 0.045$ and $\lambda_2 = 0.005$.
\begin{example}[1D Smooth Sine Wave]\label{ex:1Dsmooth} \rm
	The  sine wave propagating periodically is used to test the accuracy of {\tt MM}. The   exact solution is given by
 $
(\rho, v_1, p)  (x_1,t)= (0.5 + 0.2\sin(2\pi(x_1 - t)), 1, 1)$ for $t\geqslant0$.
The  domain $\Omega_p$ is $[0,1]$ with periodic boundary condition and the time step size $\Delta t_n$ is taken as CFL$ \Delta\xi_{1}^{5/3}$ to make the spatial error dominant.
	The monitor function  is chosen as
	\begin{align} \label{eq:van der Waalssinmonitor}
	\omega=\sqrt{1+\frac{\alpha_1\abs{\nabla_{\bm{\xi}} \rho}}{\max \abs{\nabla_{\bm{\xi}} \rho}} +\frac{\alpha_2\abs{\Delta_{\bm{\xi}} \rho}}{\max \abs{\Delta_{\bm{\xi}} \rho}} },
	\end{align}
	with $\alpha_1 = 20, \alpha_2 = 15,$ and the low pass filter is used to smooth the monitor function $20$ times.
	Figure \ref{fig:1DSin} presents the $\ell^1$-  and $\ell^\infty$-errors and the corresponding orders of convergence in  $\rho$ at $t = 1$ by using {\tt MM} with $N_1$ cells. The results show that the {\tt MM}  achieves the fifth-order accuracy as expected.

\begin{figure}[!ht]
	\centering
	\includegraphics[width=0.33\linewidth]{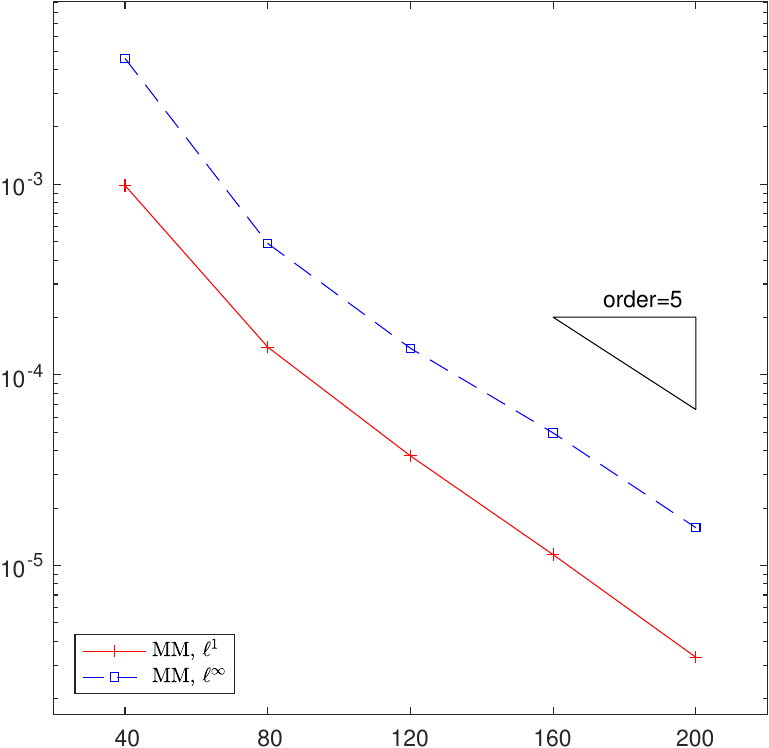}
	\caption{Example \ref{ex:1Dsmooth}:
				$\ell^1$- and $\ell^\infty$-errors  in $\rho$ with respect to $N_1$ at $t = 1.0$.
	}
	\label{fig:1DSin}
\end{figure}
\end{example}
%

\begin{example}[1D Riemann Problem]\label{ex:1DRiemann }\rm
	Several initial conditions of 1D Riemann problem are presented in Table \ref{tab:1DDenseRP}, which have been widely used in testing the
numerical schemes for the dense gas \cite{CINNELLA20061264}. The monitor function is chosen as  \eqref{eq:van der Waalsmonitor}  with $\delta_1 = \rho, \delta_2 = G $, $\alpha_1 = 1200, \alpha_2 = 3000, \kappa = 2, \alpha_3 = 5000$ for RP1 and RP2, while  $\delta_1 = \rho, \delta_2 = G $, $\alpha_1 = 1200, \alpha_2 = 1200, \kappa = 2, \alpha_3 = 1200$ for RP3. For a comparison, the counterparts of {\tt MM1} with $\alpha_2 = \alpha_3 = 0 $ are also implemented. The numerical results are computed by {\tt UM}, {\tt MM}, and {\tt MM1}, and the reference solution is computed
by using the second-order finite volume
scheme with $10000$ grid cells.

	The numerical results of RP1 at $t = 0.15 Z_c^{-1/2}$ are shown in Figure \ref{fig:DenseRP2}.
	The initial fundamental derivative  $G$ is positive,
	 as it evolves,  $G < 0$ emerges, forming a non-classical wave composed by a transonic rarefaction fan and a rarefaction shock wave.
	 Moreover, despite the sign of $G$ changes near $x_1= 0.74$, a compressible shock satisfying the Rankine-Hugoniot jump relations is formed.
The adaptive mesh points concentrate near the large gradient area of the
solution as expected. {\tt MM} with $N_1 = 100$ is better than {\tt MM1} with $N_1 = 100$ near  the contact
discontinuity, and superior to {\tt UM} with $N_1 = 300$ near the rarefaction shock wave,  the contact
discontinuity and the shock wave.
	
	RP2 is a classical example for the dense gas with  non-classical behavior. The fundamental derivative
keeps negative during the flow evolution in the whole flow field.
 Figure \ref{fig:DenseRP3} presents the numerical solution of $\rho$, $v_1$, $p$ and $G$ at $t = 0.45Z_c^{-1/2}$.
 One can see that the solution consists of
a rarefaction shock wave, a contact discontinuity and a right-running compression fan.
 The numerical results  show that {\tt MM} with $N_1 = 100$ gives  sharper transitions  than {\tt UM} with $N_1 = 300$ and {\tt MM1} with $N_1 = 100$ near the rarefaction shock wave and  the contact discontinuity  where the mesh points concentrate.

 {The last example RP3 contains a $G < 0$ region and a $G > 0$ region during the flow evolution.
In the region of the fundamental derivative changing its sign,}
a mixed rarefaction wave forms at $ t = 0.2Z_c^{-1/2}$ shown in Figure \ref{fig:DenseRP4},  while the shock wave is generated in $G > 0$ region. Figure \ref{fig:DenseRP4} shows that  {\tt MM} with $N_1 = 100$ gives  better results than {\tt MM1} with the same number of cells and {\tt UM} with $N_1 = 300$, verifying the ability in capturing the localized structures of  {\tt MM}.

	\begin{table}[h]
		\centering
		\begin{tabular}{l|cccccccc}
		\hline  & $\rho_{l}$ & $v_{1,l}$ & $p_{l}$ & $G_{l}$ & $\rho_{r}$ & $v_{1,r}$ & $p_{r}$ & $G_{r}$ \\\hline
		RP1 & $1.818$ & $0.0$ & $3.000$ & $4.118$ & $0.275$ & $0.0$ & $0.575$ & $0.703$ \\
		RP2 & $0.879$ & $0.0$ & $1.090$ & $-0.031$ & $0.562$ & $0.0$ & $0.885$ & $-4.016$ \\
		RP3 & $0.879$ & $0.0$ & $1.090$ & $-0.031$ & $0.275$ & $0.0$ & $0.575$ & $0.703$ \\
		\hline
	\end{tabular}
		\caption{Example \ref{ex:1DRiemann }. Initial data of the Riemann problem for the one-dimensional shock tube.}
		\label{tab:1DDenseRP}
	\end{table}

	\begin{figure}[!ht]
		\centering
		\begin{subfigure}[b]{0.3\textwidth}
			\centering
			\includegraphics[width=1.0\linewidth]{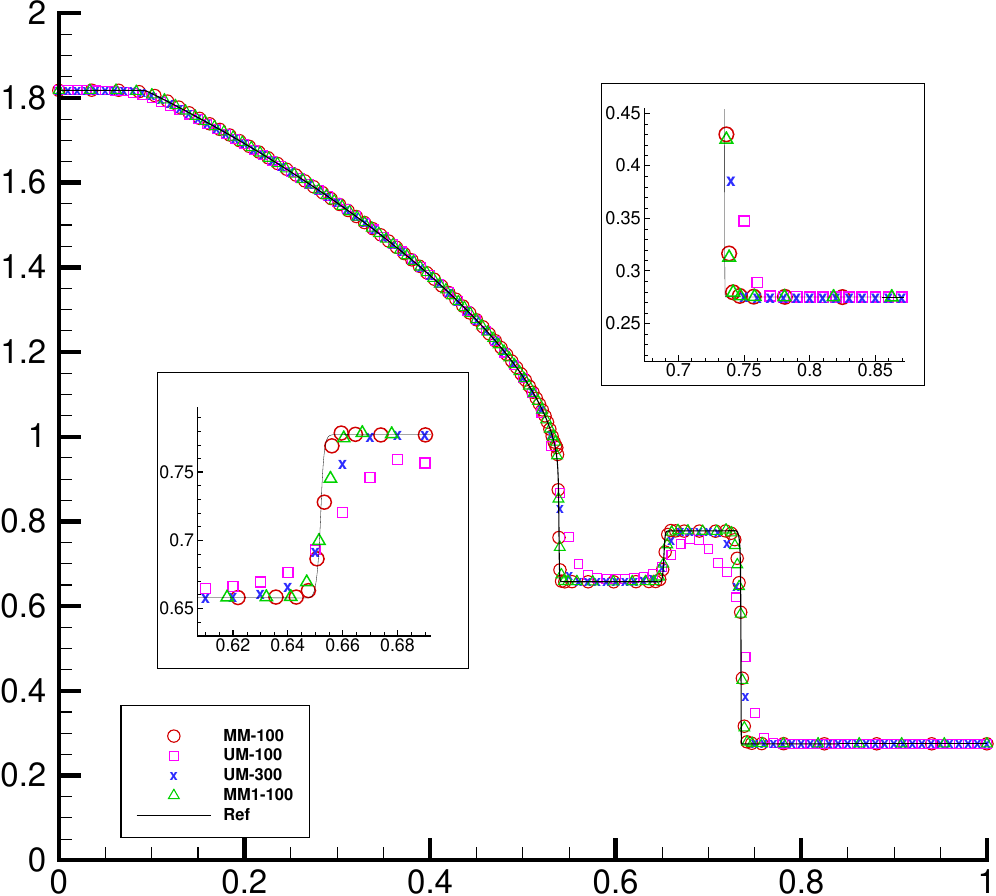}
			\caption{$\rho$}
		\end{subfigure}
		\begin{subfigure}[b]{0.3\textwidth}
			\centering
			\includegraphics[width=1.0\linewidth]{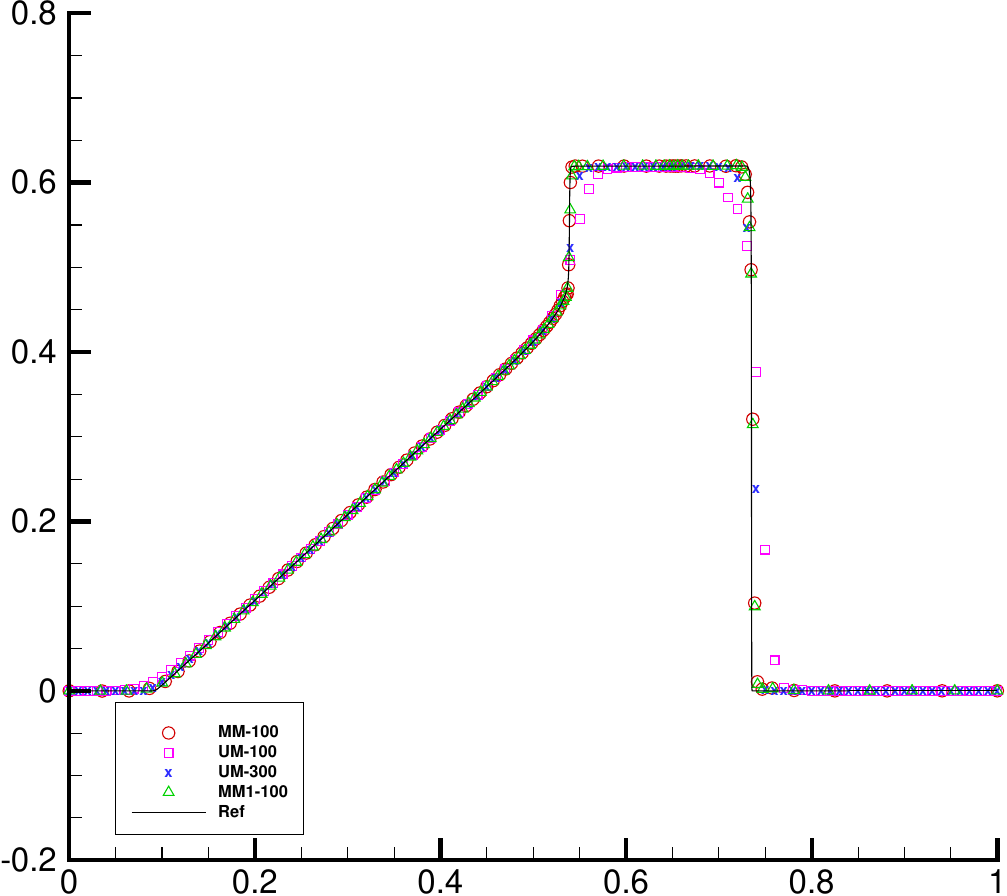}
			\caption{$v_1$}
		\end{subfigure}
		
		\begin{subfigure}[b]{0.3\textwidth}
			\centering
			\includegraphics[width=1.0\linewidth]{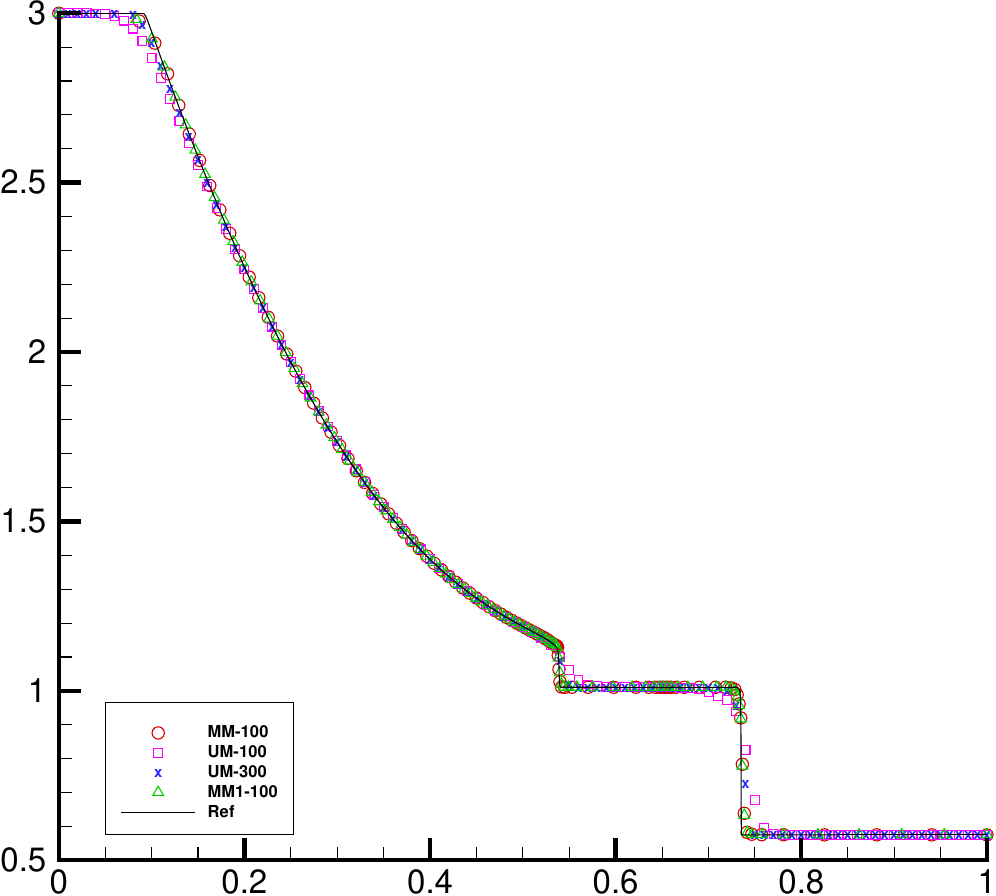}
			\caption{$p$}
		\end{subfigure}
		\begin{subfigure}[b]{0.3\textwidth}
			\centering
			\includegraphics[width=1.0\linewidth]{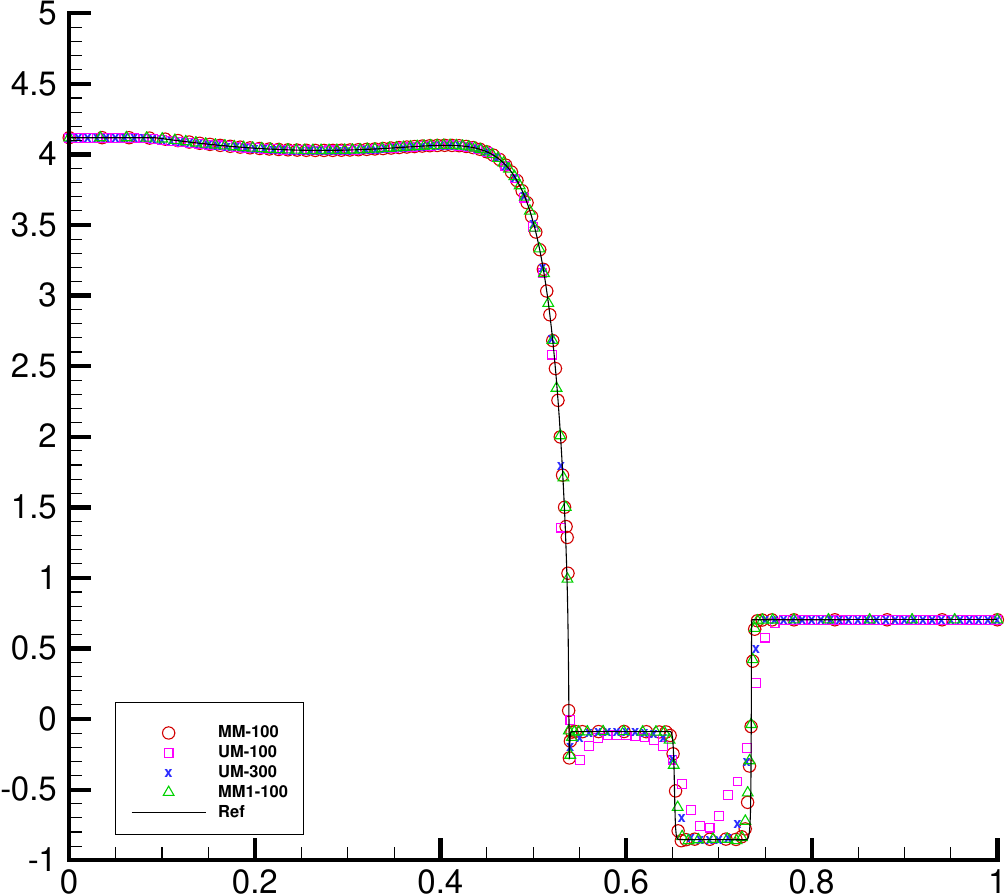}
			\caption{$G$}
		\end{subfigure}
	\caption{The numerical results of RP1 at $t = 0.15Z_c^{-1/2}.$}
	\label{fig:DenseRP2}
	\end{figure}

	\begin{figure}[!ht]
		\centering
	\begin{subfigure}[b]{0.3\textwidth}
	\centering
	\includegraphics[width=1.0\linewidth]{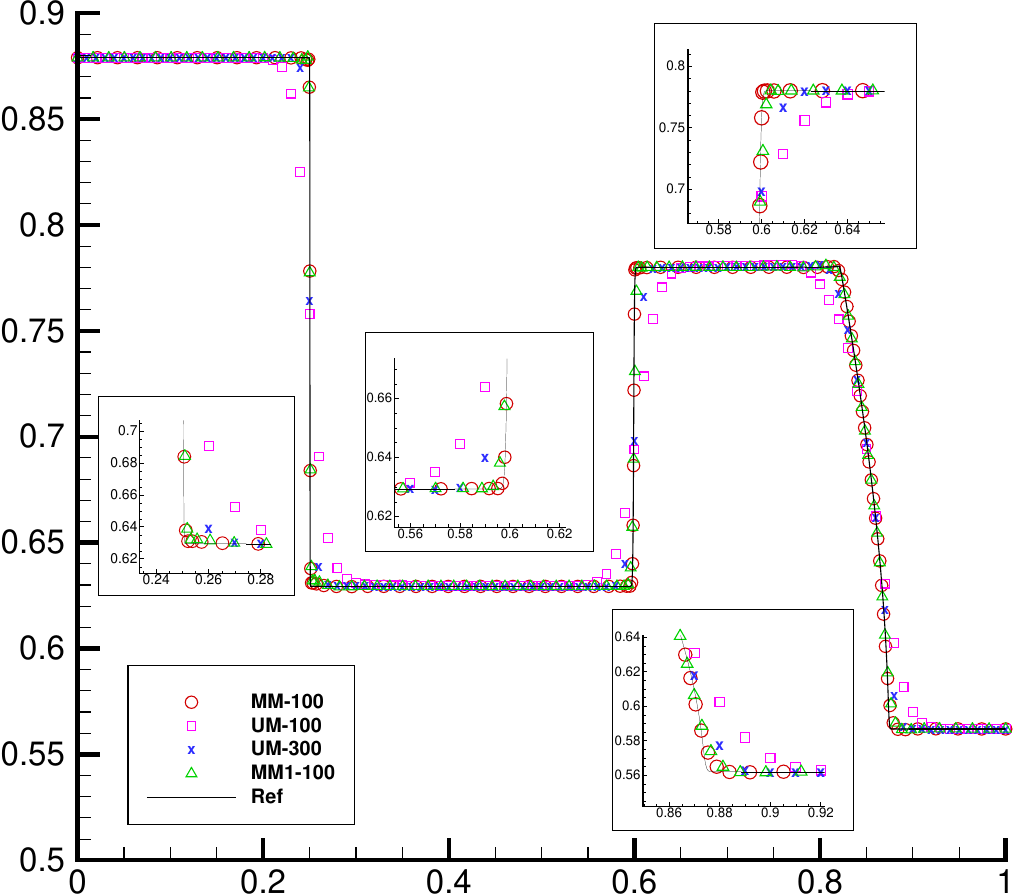}
	\caption{$\rho$}
\end{subfigure}
\begin{subfigure}[b]{0.3\textwidth}
	\centering		
	\includegraphics[width=1.0\linewidth]{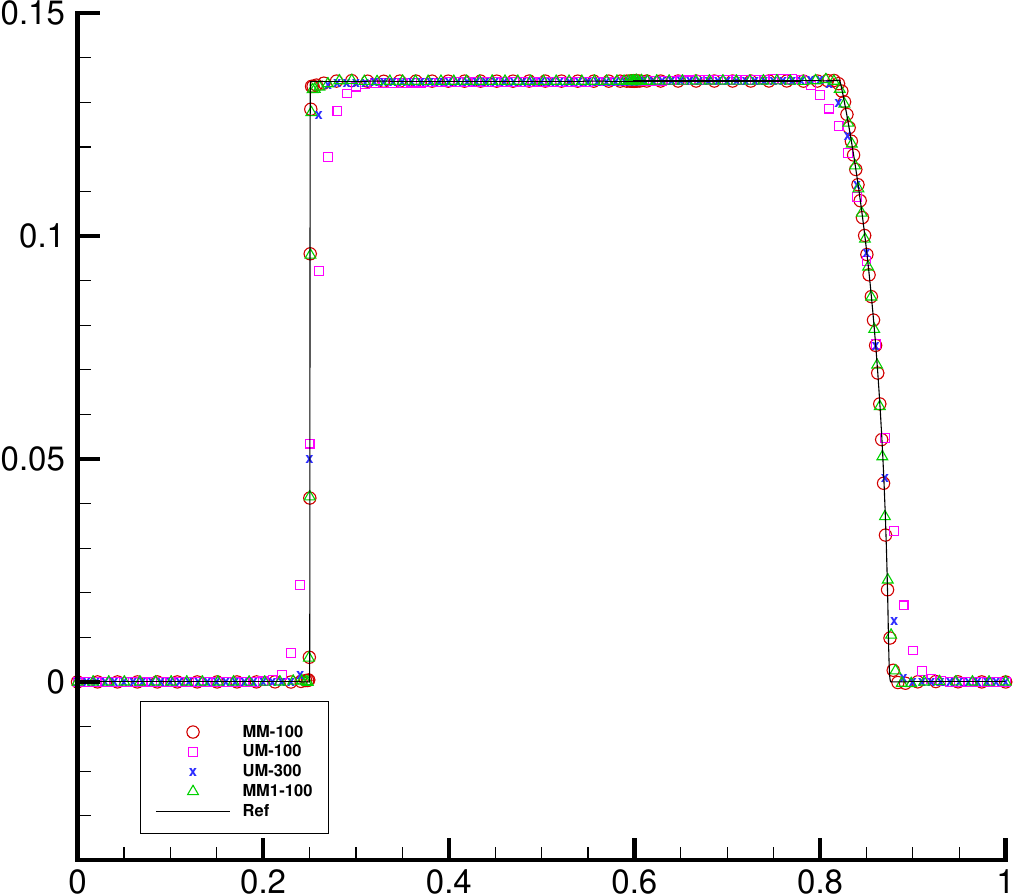}
	\caption{$v_1$}
\end{subfigure}

\begin{subfigure}[b]{0.3\textwidth}
	\centering
	\includegraphics[width=1.0\linewidth]{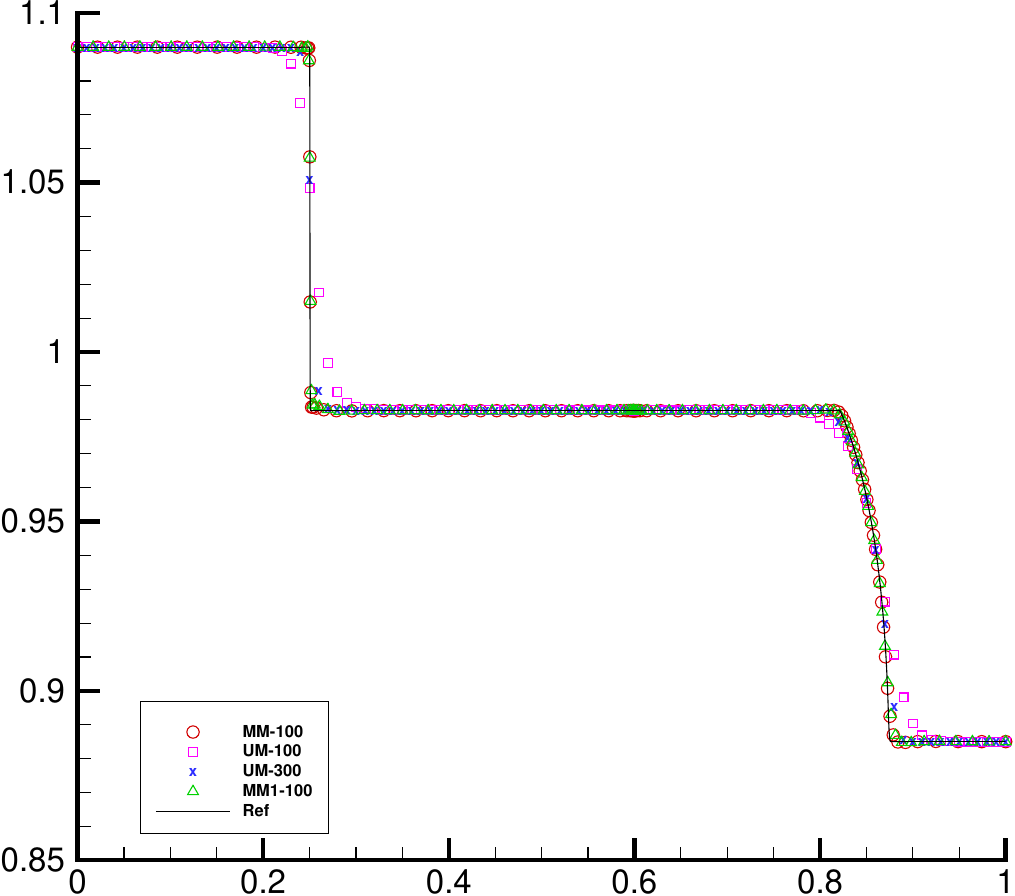}
	\caption{$p$}
\end{subfigure}
\begin{subfigure}[b]{0.3\textwidth}
	\centering
	\includegraphics[width=1.0\linewidth]{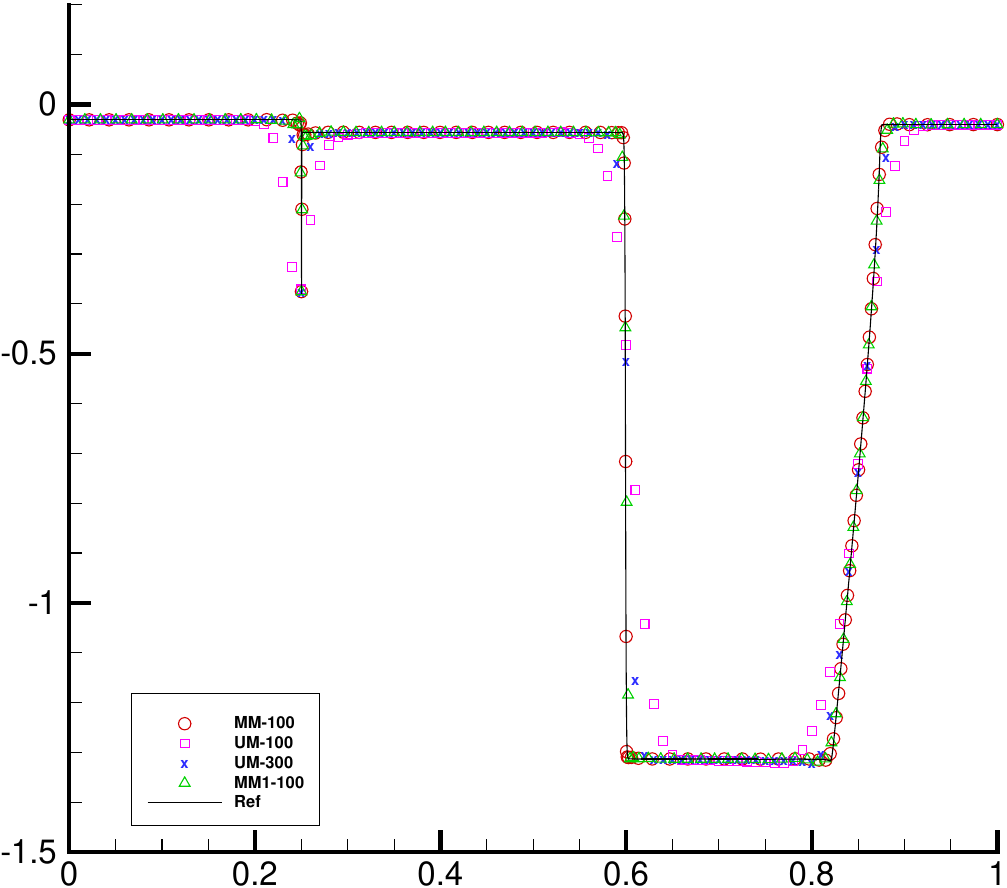}
	\caption{$G$}
\end{subfigure}
		\caption{ The numerical results of RP2 at $t = 0.45Z_c^{-1/2}.$}
		\label{fig:DenseRP3}
	\end{figure}
	
	\begin{figure}[!ht]
		\centering
		\begin{subfigure}[b]{0.3\textwidth}
		\centering
		\includegraphics[width=1.0\linewidth]{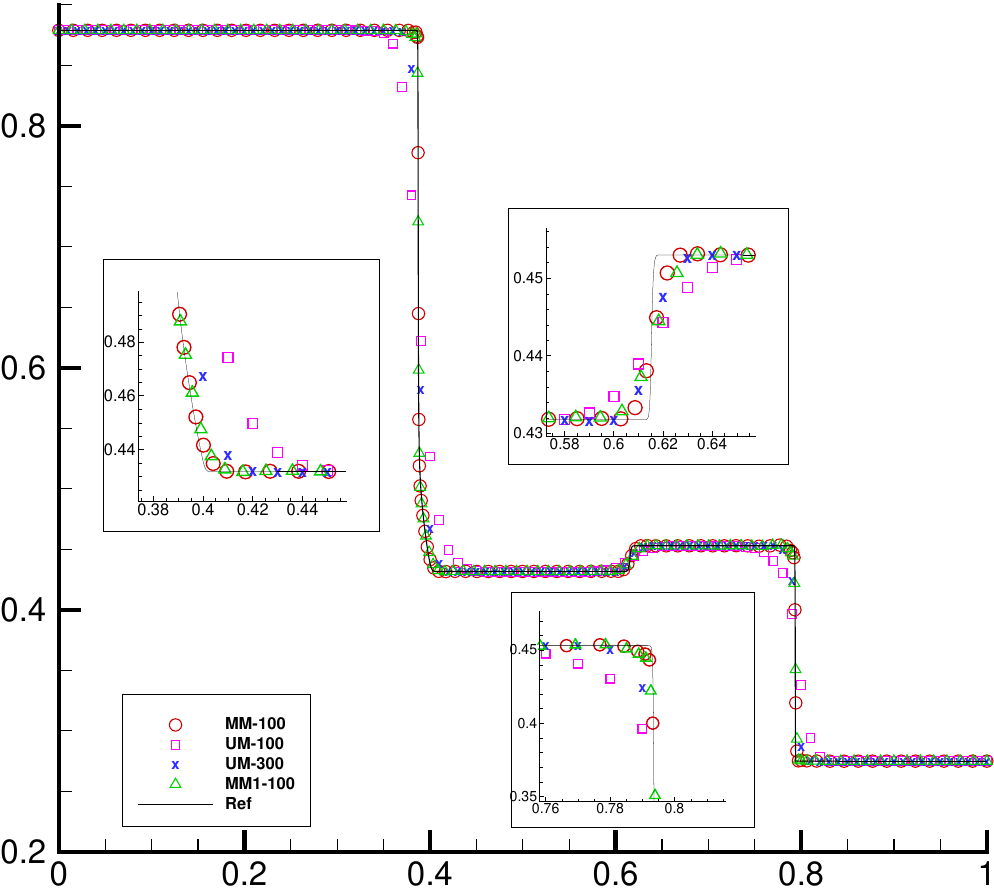}
		\caption{$\rho$}
	\end{subfigure}
	\begin{subfigure}[b]{0.3\textwidth}
		\centering
		\includegraphics[width=1.0\linewidth]{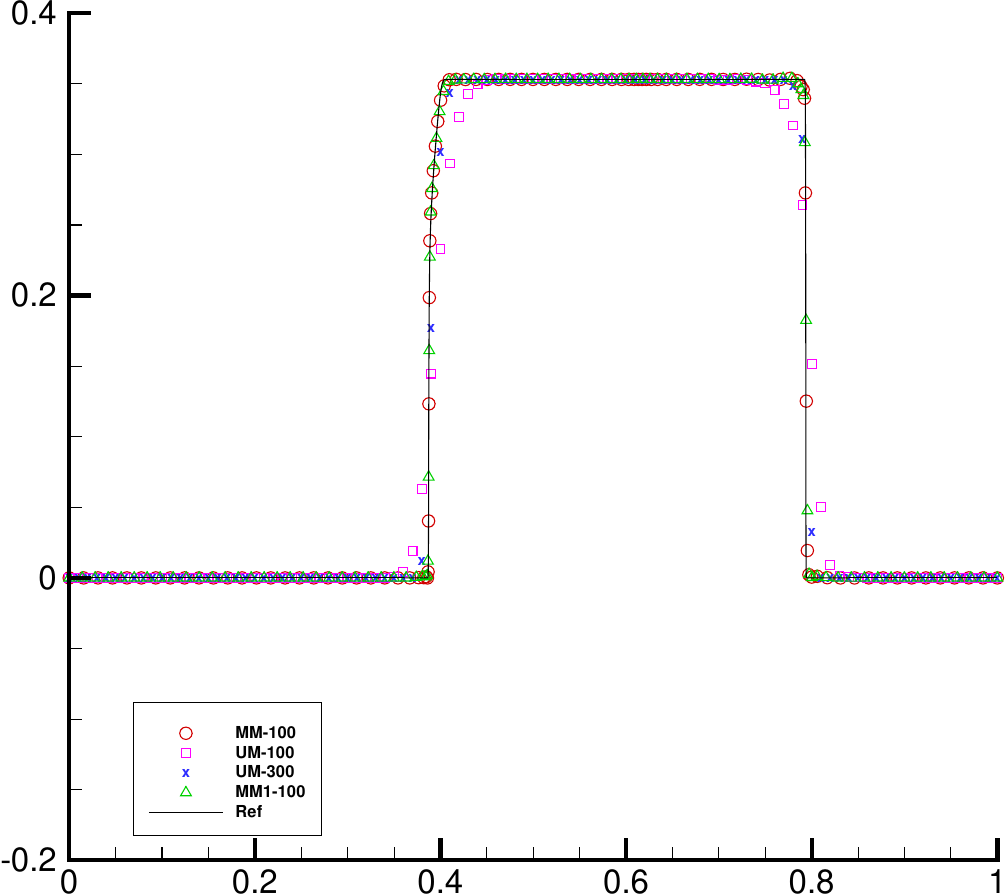}
		\caption{$v_1$}
	\end{subfigure}
	
	\begin{subfigure}[b]{0.3\textwidth}
		\centering
		\includegraphics[width=1.0\linewidth]{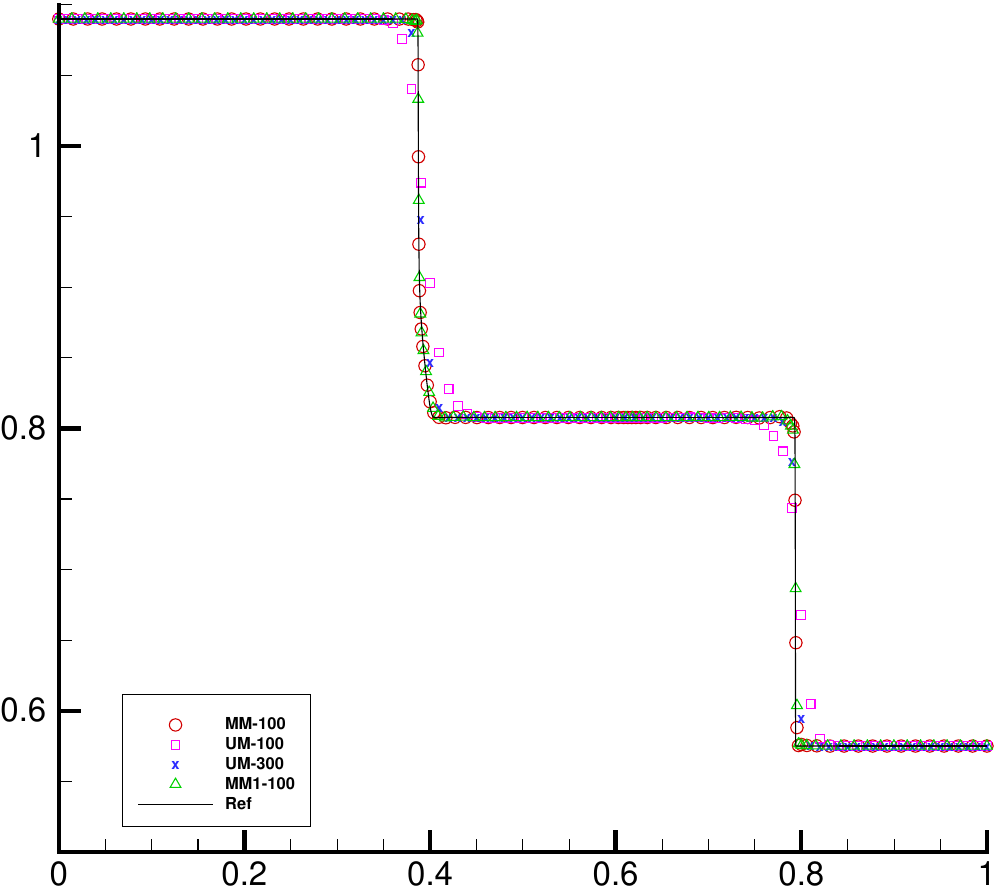}
		\caption{$p$}
	\end{subfigure}
	\begin{subfigure}[b]{0.3\textwidth}
		\centering
		\includegraphics[width=1.0\linewidth]{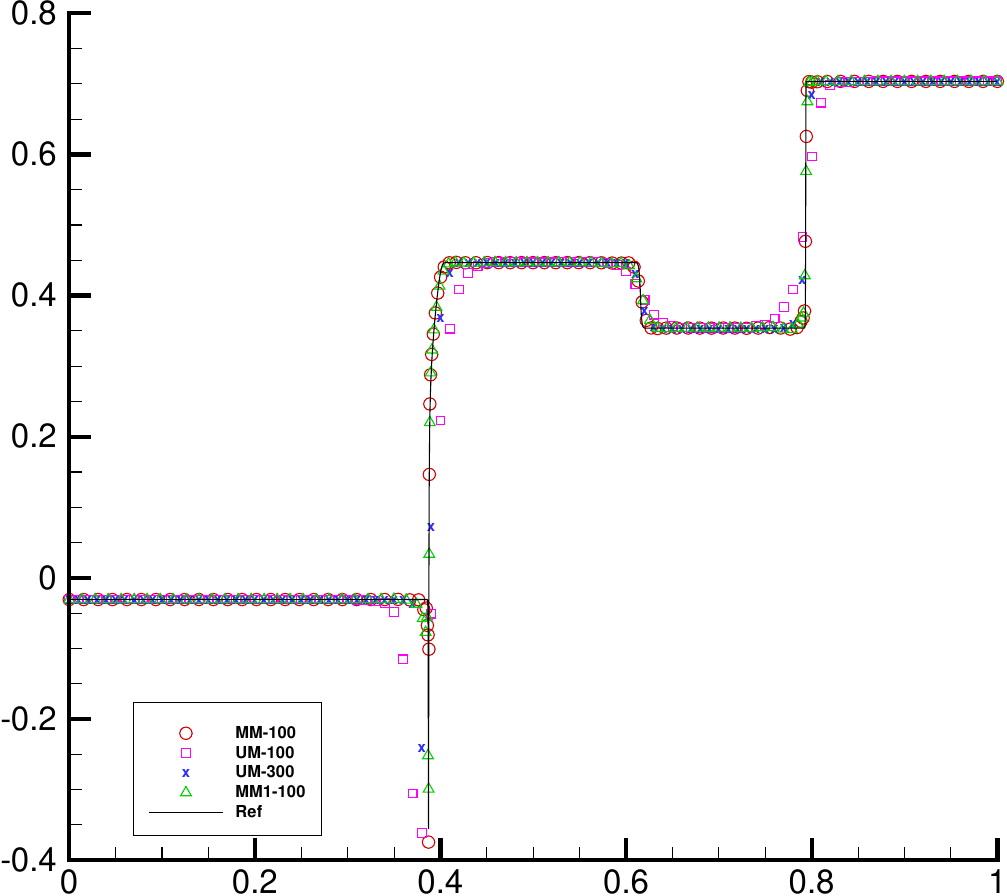}
		\caption{$G$}
	\end{subfigure}
		\caption{ The numerical results of RP3 at $t = 0.2Z_c^{-1/2}.$}
		\label{fig:DenseRP4}
	\end{figure}
\end{example}

\begin{example}[2D Smooth Sine Wave]\label{ex:2Dsmooth} \rm
	Similar to Example \ref{ex:1Dsmooth}, this case is employed to evaluate the accuracy of  2D {\tt MM}. The exact solutions  are
	$(\rho, v_1, v_2, p) (x_1,x_2,t) = (0.5 + 0.2\sin(2\pi(x_1+x_2 - t)), 0.5, 0.5, 1)$ for $t\geqslant0$.
	The
domain $\Omega_p$ is $[0, 1]\times [0,1]$  divided into $ N_1 \times N_1$  cells, and  periodic boundary conditions
are specified on each boundary.
The monitor function is chosen as Example \ref{ex:1Dsmooth} with $\alpha_1 = 20, \alpha_2 = 20,$ and the number of smooth times is the same as Example \ref{ex:1Dsmooth}.
{\color{black}
The boundary points move adaptively  according to the  periodic boundary conditions.}
Figure \ref{fig:2DSin} shows the $\ell^1$- and $\ell^\infty$-errors and the corresponding orders of convergence in  $\rho$ at $t = 1$ with  {\tt MM}, validating the fifth-order accuracy of {\tt MM}.
	\begin{figure}[!ht]
		\centering
		\includegraphics[width=0.33\linewidth]{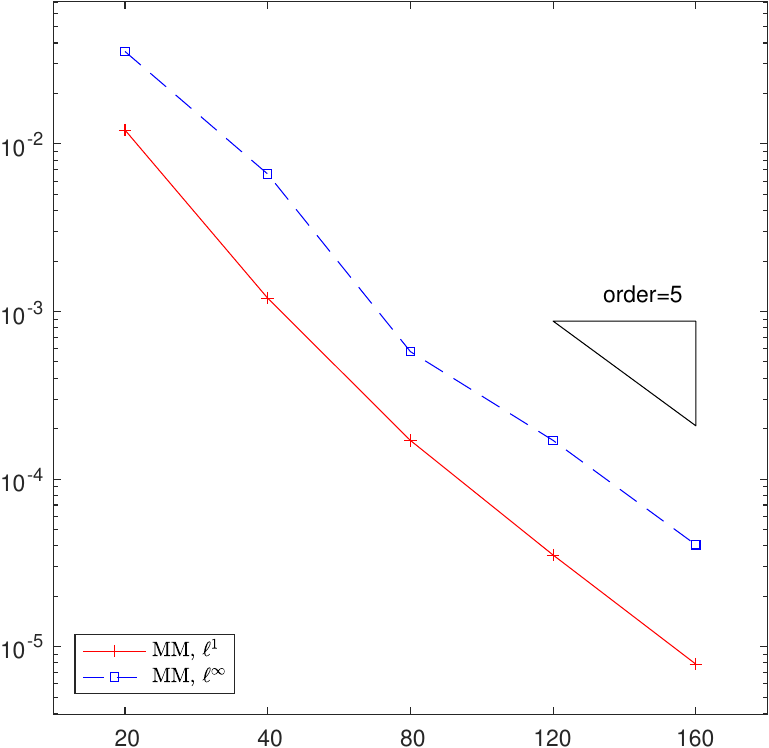}
		\caption{Example \ref{ex:2Dsmooth}.	$l^1$- and $l^\infty$-errors  in $\rho$ with respect to $N_1$ at $t = 1.0$.
		}
		\label{fig:2DSin}
	\end{figure}

\end{example}
\begin{example}[2D Mach Reflection]\label{ex:Mach}\rm
The Mach reflection is often  used to test codes for the dense gas.
 It describes an initially shock wave propagating to the right at an angle  $\theta$  to the  $x_1$-axis, forming complex reflection phenomena related to the angle of inclination and the shock Mach number
\cite{2000Two}.
 The domain $\Omega_p$ is $[0,1]\times [0,0.5] $ and
 the initial conditions are given in Table \ref{tab:Mach}. The monitor function of {\tt MM} is chosen as  \eqref{eq:van der Waalsmonitor} with
 $\kappa  = 1, \delta_1 = \rho$, $\alpha_1 = 1200$, $\alpha_2 = 500,$ and  {\tt MM1} is the same as  {\tt MM}
except for $\alpha_2 = 0.$

	The initial data of P1  describes the van der Waals gas with small specific heat. The incidence angle of the initial shock is $\theta = 20^\circ$, and the Mach  number is $M_s = 1.65$. It forms a single Mach reflection as shown in Figure \ref{fig:SMR_P1}. The slipstream is not very obvious due to the weak shock wave. The fundamental derivative keeps positive
 during the flow evolution. Figure  \ref{fig:SMR_P1}  presents the adaptive mesh of {\tt MM} with $200 \times 100$ cells, the densities obtained respectively by {\tt MM}, {\tt MM1}, and {\tt UM}, and the fundamental derivative given by {\tt MM} at $t = 1.0$. 
 It can be observed that the mesh points of the {\tt MM}  are adaptively concentrated near the incident shock wave, the reflected shock wave, and the Mach stem.
		 {\tt MM} with
	  $200 \times 100$ cells gives better results than {\tt MM1} with
	   $200 \times 100$ cells and {\tt UM} with $600 \times 300$ cells, when it only costs $35.5\%$ CPU time of {\tt UM} with the finer mesh shown in Table \ref{time:MachBack},
	   { demonstrating the efficiency of  {\tt MM}.}

	The D1 case describes an incident 
	 shock wave with $\theta = 30^\circ$ and $M_s = 1.05$. Figure  \ref{fig:SMR_D1}  gives the adaptive mesh of {\tt MM} with $200 \times 100$ cells, the densities and the fundamental derivative at $t = 0.06$.
	 It is worth noting that the wave near $x_1 = 0.52$ in Figure \ref{fig:SMR_D1} is  partially recompressed 
	 with the positive fundamental derivative.
	The solution of {\tt MM} with $200\times100$ cells gives sharper transitions near the shock waves than that on the uniform mesh with $600 \times 300$ cells, since the mesh points
adaptively concentrate near those regions according to the monitor function. {\tt MM} with $200 \times 100$ cells only  take   $28.6\%$ CPU time of the uniform mesh with $600 \times 300$ cells  shown in Table \ref{time:MachBack}, which highlight the efficiency of the adaptive moving mesh scheme.

	\begin{table}
		\centering
			\begin{tabular}{l|ccccccccc}
		\hline   &$\gamma $  & $\rho_{l}$ & $v_{1,l}$& $v_{2,l}$ & $p_{l}$ &$\rho_{r}$ & $v_{1,r}$ & $v_{2,r}$& $p_{r}$    \\
		\hline
		P1 & $1.4$ &$0.033$ & $5.016$ &$0.0$ & $3.001$ & $0.0156$ &$0.0$&$0.0$ & $1.0$   \\
		D1 & $1.0125$   & $0.629$ & $-0.135$ &$0.0$ & $0.983$ & $0.879$ & $0.0$ &$0.0$ & $1.09$  \\
		\hline
	\end{tabular}
		\caption{Example \ref{ex:Mach}. The initial data of the Mach reflection problem.}
		\label{tab:Mach}
	\end{table}
	\begin{table}
		\centering
		\begin{tabular}{l|ccccc}
		\hline & {\tt MM} ($200 \times 100$) & {\tt MM1} ($200 \times 100$)  & {\tt UM} ($200 \times 100$)  & {\tt UM} ($600 \times 300$)  \\
		\hline P1 & $3 \mathrm{m}53\mathrm{s}$
		& $3 \mathrm{m} 19 \mathrm{s}$ & $ 37 \mathrm{s}$
		& $ 10 \mathrm{m} 57 \mathrm{s}$
		\\
		D1 & $4\mathrm{m}$   & $3 \mathrm{m} 37 \mathrm{s}$ & $ 47\mathrm{s}$
		& $ 13 \mathrm{m} 9 \mathrm{s}$
		\\  TD1
		&$2 \mathrm{m} 21\mathrm{s}$
		& $1\mathrm{m} 38 \mathrm{s}$ &$27 \mathrm{s}$ &$11\mathrm{m} 57 \mathrm{s}$ \\
		TD2 &$3 \mathrm{m} 42\mathrm{s}$   & $2\mathrm{m} 13 \mathrm{s}$ &$ 21\mathrm{s}$ &$8\mathrm{m} 8 \mathrm{s}$  \\
		\hline
	\end{tabular}
		\caption{CPU times of Examples \ref{ex:Mach}-\ref{ex:backstep} (4 cores).}
		\label{time:MachBack}
	\end{table}
	\begin{figure}[!ht]
	\begin{subfigure}[b]{0.32\textwidth}
	\centering
	\includegraphics[width=1\linewidth]{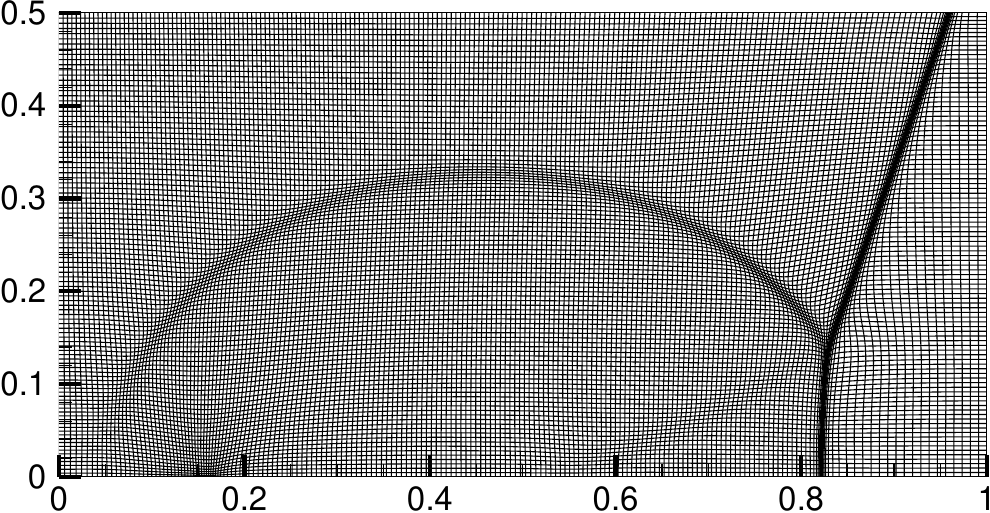}
	\caption{Mesh ({\tt {MM}}, $200 \times 100$)}
\end{subfigure}
\begin{subfigure}[b]{0.32\textwidth}
	\centering
	\includegraphics[width=1.01\linewidth]{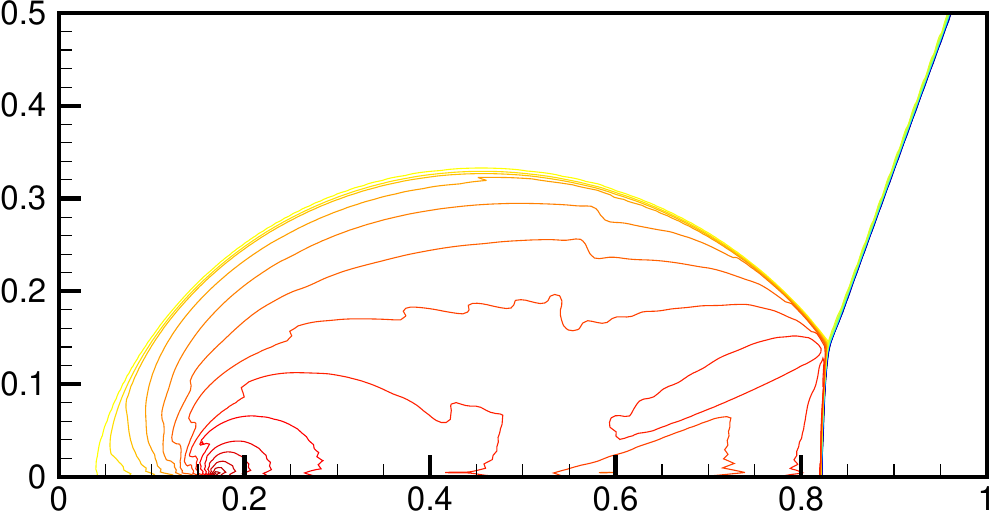}
	\caption{$\rho$ ({\tt{MM}}, $200 \times 100$)}
\end{subfigure}
\begin{subfigure}[b]{0.32\textwidth}
	\centering
	\includegraphics[width=1.0\linewidth]{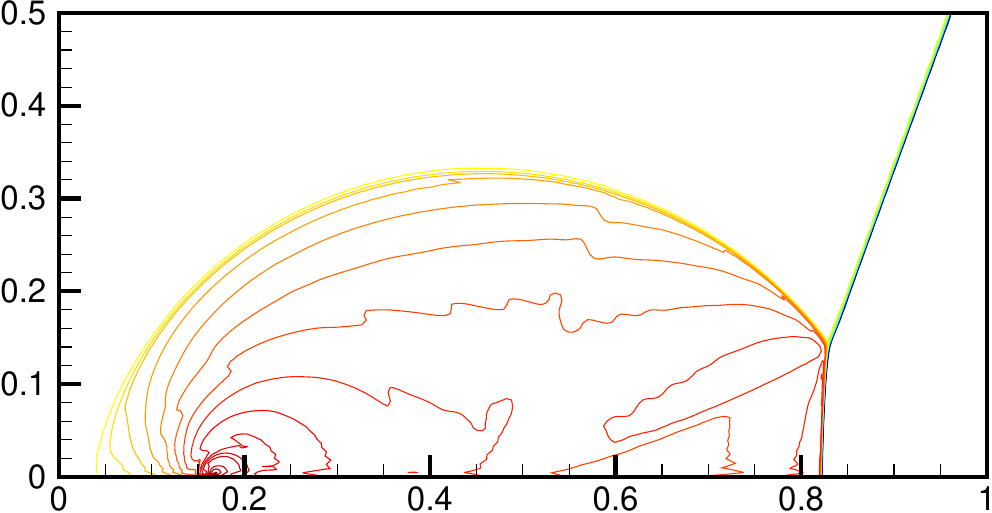}
	\caption{$\rho$ ({\tt{MM1}}, $200 \times 100$)}
\end{subfigure}

\begin{subfigure}[b]{0.32\textwidth}
	\includegraphics[width=1.0\linewidth]{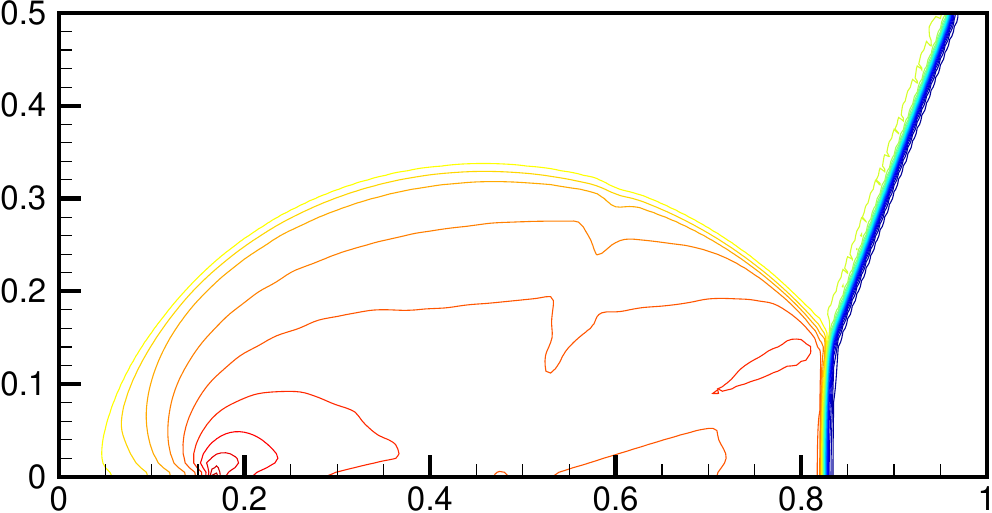}
	\caption{$\rho$ ({\tt UM}, $200 \times 100$)}
\end{subfigure}
\begin{subfigure}[b]{0.32\textwidth}
	\includegraphics[width=1.0\linewidth]{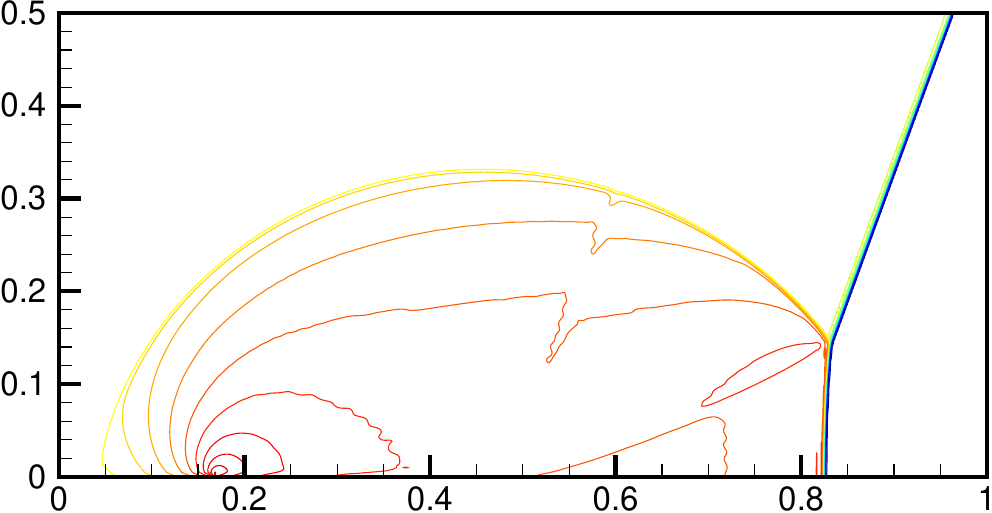}
	\caption{$\rho$ ({\tt UM}, $600 \times 300$)}
\end{subfigure}
\begin{subfigure}[b]{0.32\textwidth}
	\centering
	\includegraphics[width=1.0\linewidth]{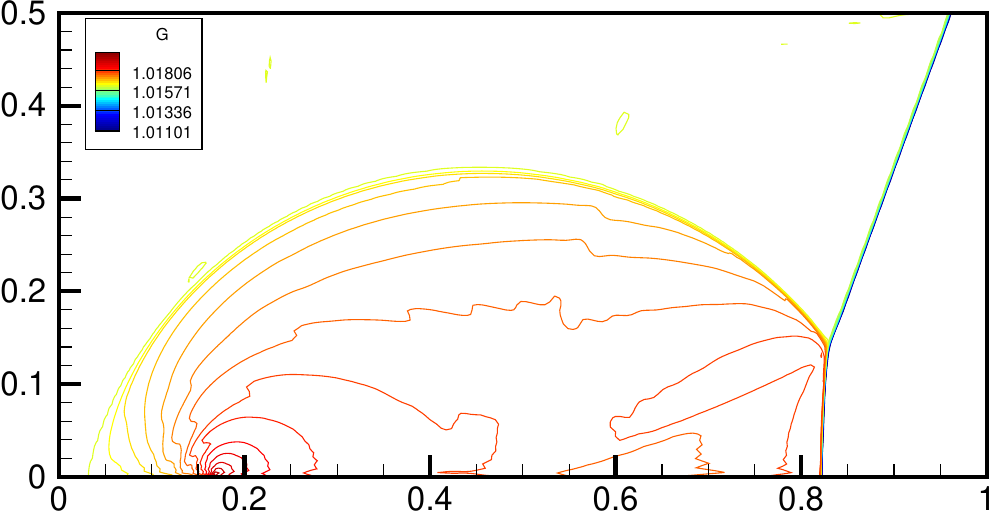}
	\caption{G ({\tt{MM}}, $200 \times 100$)}
	\label{fig: P1_G}
\end{subfigure}
	\caption{Example \ref{ex:Mach}.
		Adaptive mesh of {\tt {MM}} with $200 \times 100$ cells, density contours ($40$ equally spaced
		contour lines) of {\tt {MM}}, {\tt MM1} and {\tt UM}, and the
		fundamental derivative contours ($40$ equally spaced
		contour lines) of {\tt MM} at $t = 1$.}
	\label{fig:SMR_P1}
\end{figure}

	\begin{figure}[!ht]
		\begin{subfigure}[b]{0.32\textwidth}
		\centering
		\includegraphics[width=1.0\linewidth]{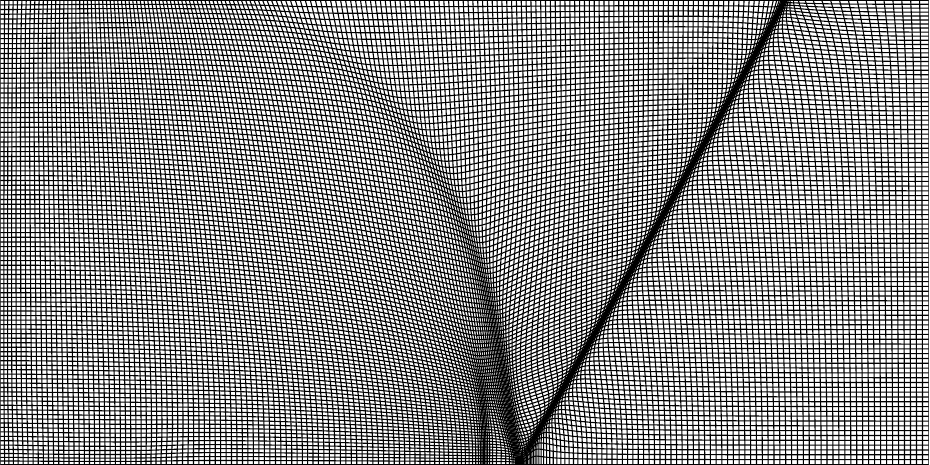}
		\caption{Mesh ({\tt {MM}}, $200 \times 100$)}
	\end{subfigure}
	\begin{subfigure}[b]{0.32\textwidth}
		\centering
		\includegraphics[width=1.0\linewidth]{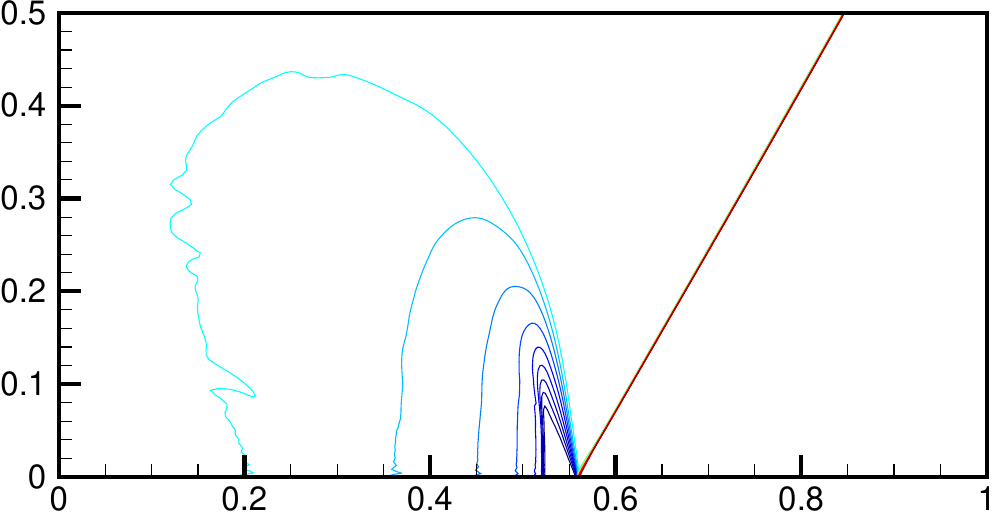}
		\caption{$\rho$ ({\tt {MM}}, $200 \times 100$)}
	\end{subfigure}
	\begin{subfigure}[b]{0.32\textwidth}
		\centering
		\includegraphics[width=1.0\linewidth]{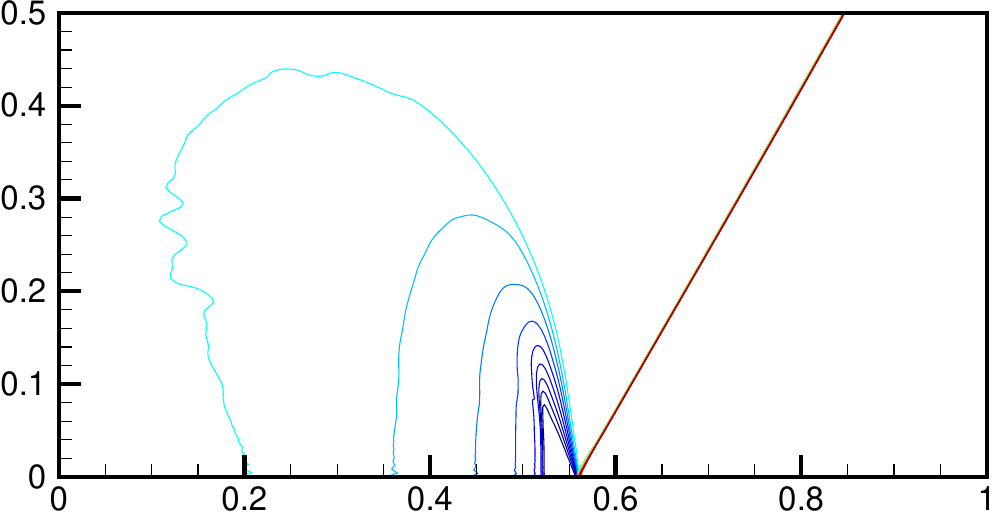}
		\caption{$\rho$ ({\tt {MM1}}, $200 \times 100$)}
	\end{subfigure}
	
	\begin{subfigure}[b]{0.32\textwidth}
		\centering
		\includegraphics[width=1.0\linewidth]{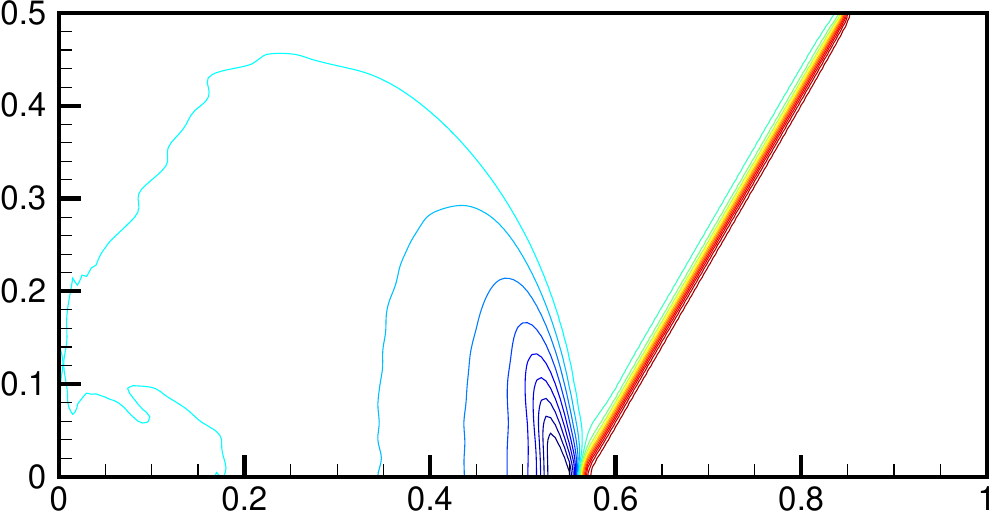}
		\caption{$\rho$ ({\tt UM}, $200 \times 100$)}
	\end{subfigure}
	\begin{subfigure}[b]{0.32\textwidth}
		\centering
		\includegraphics[width=1.0\linewidth]{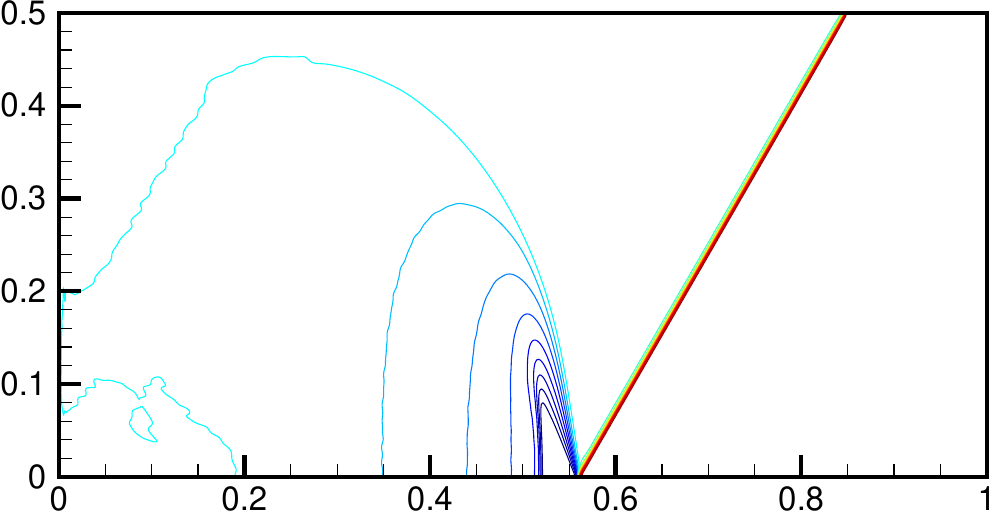}
		\caption{$\rho$ ({\tt {UM}}, $600 \times 300$)}
	\end{subfigure}
	\begin{subfigure}[b]{0.32\textwidth}
	\centering
	\includegraphics[width=1.0\linewidth]{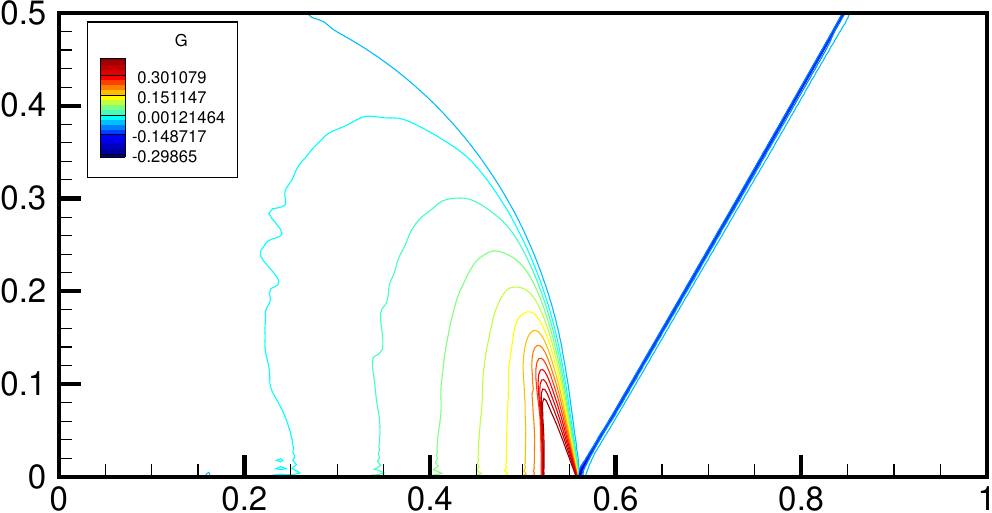}
	\caption{$\rho$ ({\tt {MM}}, $200 \times 100$)}
	\label{ex:D1_G}
     \end{subfigure}
		\caption{Example \ref{ex:Mach}.
	Adaptive mesh of {\tt {MM}} with $200 \times 100$ cells, density contours ($20$ equally spaced
	contour lines) of {\tt MM}, {\tt MM1} and {\tt UM}, and the
	fundamental derivative contours ($20$ equally spaced
	contour lines) of {\tt MM} at $t = 0.06$.
		}
		\label{fig:SMR_D1}
	\end{figure}

\end{example}

\begin{example}[Backward Step Problem]\label{ex:backstep}\rm	
	This test describes a moving shock wave diffracts around a $90^\circ$ corner given in \cite{brown1998nonclassical}. The initial data are shown in Table
	\ref{tab:back}. The monitor function of {\tt MM} is chosen as
	 \eqref{eq:van der Waalsmonitor} for TD1 and TD2 with $\kappa = 2, \delta_1 = \rho, \delta_2 = G $, $\alpha_1 = 1200, \alpha_2 = 1200, \alpha_3 = 0,$ and those of {\tt MM1} are identical to  {\tt MM}
	 except for $\alpha_2 = 0.$
	 		
 TD1 is  about the process of a shock wave with Mach number $M_s = 1.23$  moving to the right. The adaptive mesh of {\tt MM} with $200 \times 100$ cells and  the densities at $t = 0.55$ are shown in Figure \ref{fig:BackP1}.   Figure \ref{ex: BackD2_G}  presents the fundamental derivative obtained by {\tt MM} with $200\times 100$ cells. Unlike the case of ideal gas, an expansion shock wave evolves at the corner where the fundamental derivative G changes its sign. Figure \ref{fig:2DTD1} gives the densities  along the line connecting $(0.6, 0.55)$ and $(2,0.55)$.  One can seen that the solution given by {\tt MM} with $200\times 100$ cells is superior to the solutions given by {\tt UM} with $600 \times 300$ cells and {\tt MM1} with $200\times 100$ cells near $x_1 = 1.5.$
As shown in Table \ref{time:MachBack}, the CPU time required by the {\tt MM} scheme is only 19.7\% of that consumed by the {\tt UM} scheme with $600 \times 300$ cells, while {\tt MM}  provides a higher-resolution solution.

TD2 describes a shock wave with
$M_s = 1.05$ that spreads into a fan near the corner during flow evolution.
 Figure  \ref{fig:BackD1}   gives the adaptive mesh of {\tt MM} with $200 \times 100$ cells and the densities at $t = 1.5$. One can see that  the mesh points
adaptively concentrate near the large gradient area of the density and fundamental derivative.
The $G>0$ region only appears near the step  at $x=0.95$ shown in \ref{ex:BackD1_G}.  Figure \ref{fig:2DTD2} depicts  the densities  along the line connecting $(0.6, 0.55)$ and $(2,0.55)$. It is evident that  {\tt MM} captures the localized
 structures better than {\tt MM1} near $x_1=0.95$ with the same number of cells.
 Table \ref{time:MachBack} tells us that {\tt MM} is  efficient since it costs $45.5\%$ CPU time of {\tt UM} with a finer mesh, but gets better results.

	\begin{table}
		\centering
		\begin{tabular}{l|cccccccc}
			\hline  & $\rho_{l}$ & $v_{1,l}$ & $v_{2,l}$ & $p_{l}$ & $\rho_{r}$ & $v_{1,r}$ & $v_{2,r}$ & $p_{r}$  \\
		\hline TD1   &$0.85$& $0.64$ & $0.0$& $1.01$ & $0.28$ & $0.0$ & $0.0$ & $0.58$ \\
		TD2   & $0.62$  & $-0.14$ & $0.0$& $0.98$ & $0.88$ & $0.0$  & $0.0$& $1.09$ \\
		\hline
		\end{tabular}
		\caption{Example \ref{ex:backstep}. The initial data of flow over a backward step.}
		\label{tab:back}
	\end{table}

	\begin{figure}[!ht]
	\centering
	\begin{subfigure}[b]{0.32\textwidth}
		\centering
		\includegraphics[width=1.0\linewidth]{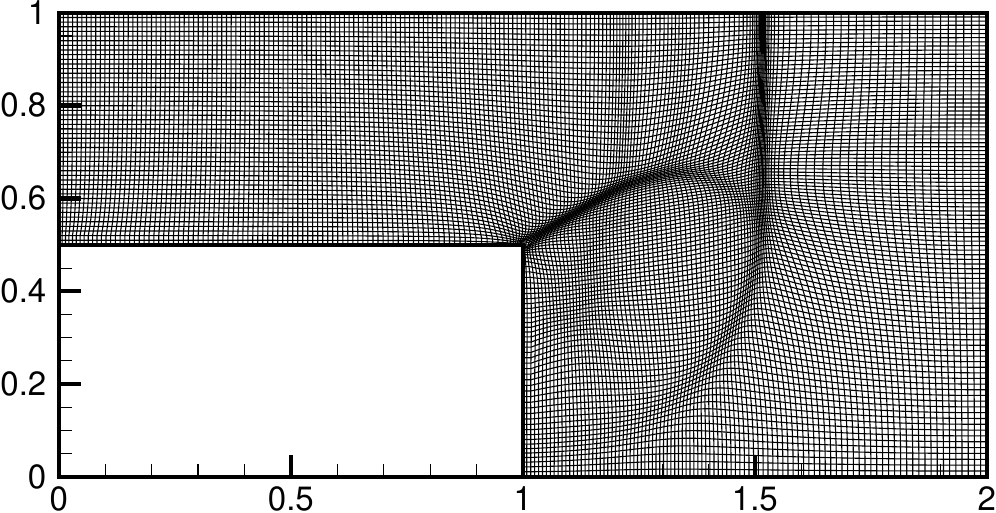}
		\caption{Mesh ({\tt {MM}}, $200 \times 100$)}
	\end{subfigure}
	\begin{subfigure}[b]{0.32\textwidth}
		\centering
		\includegraphics[width=1.0\linewidth]{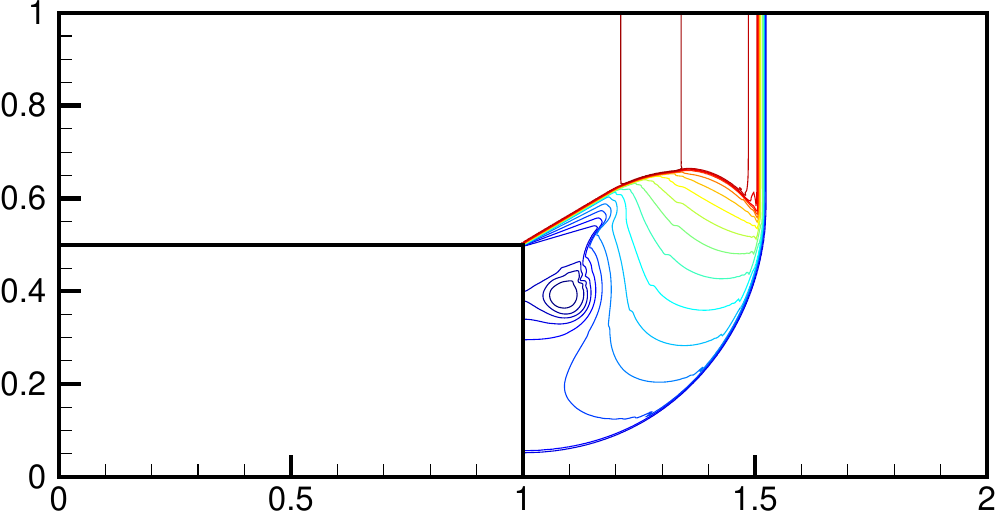}
		\caption{$\rho$ ({\tt {MM}}, $200 \times 100$)}
	\end{subfigure}
	\begin{subfigure}[b]{0.32\textwidth}
		\centering
		\includegraphics[width=1.0\linewidth]{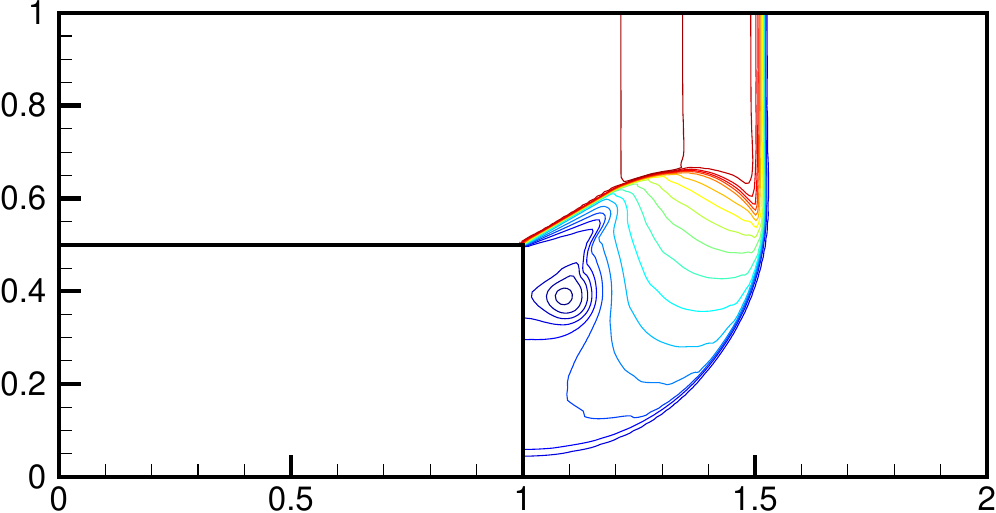}
		\caption{$\rho$ ({\tt {MM1}}, $200 \times 100$)}
	\end{subfigure}
	%
	
	\begin{subfigure}[b]{0.32\textwidth}
		\centering
		\includegraphics[width=1.0\linewidth]{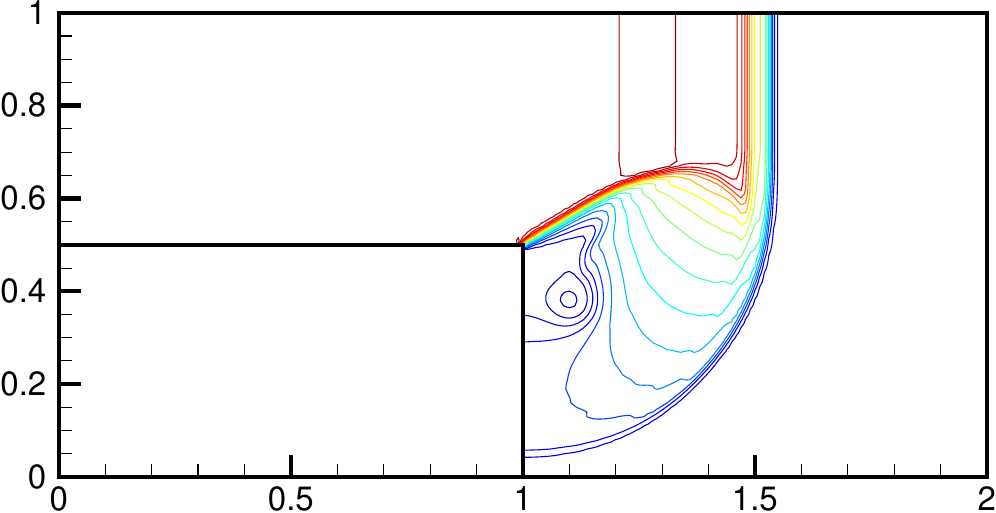}
		\caption{$\rho$ ({\tt UM}, $200 \times 100$)}
	\end{subfigure}
	\begin{subfigure}[b]{0.32\textwidth}
		\centering
		\includegraphics[width=1.0\linewidth]{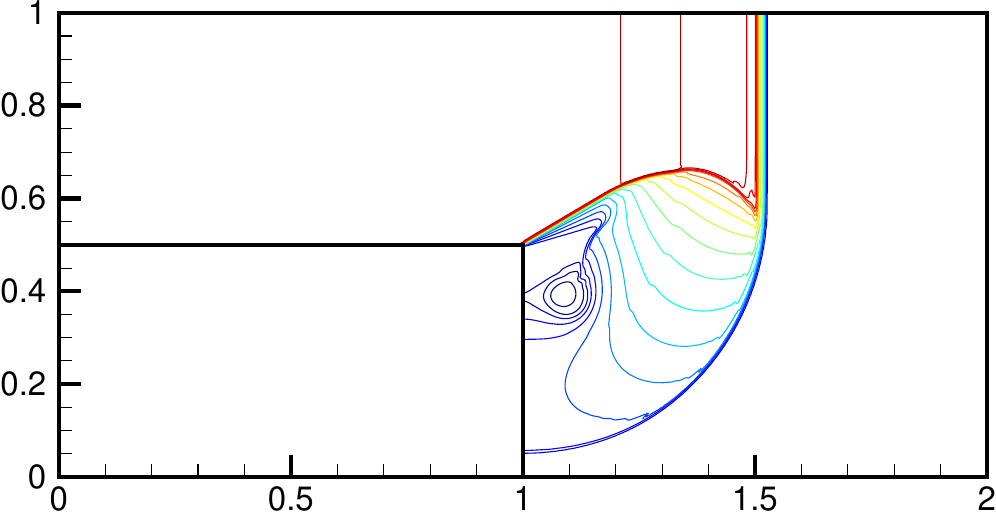}
		\caption{$\rho$ ({\tt UM}, $600 \times 300$)}
	\end{subfigure}
	
	\caption{
		Example \ref{ex:backstep}.
		Adaptive mesh of {\tt MM} with $200 \times 100$ cells, density contours ($20$ equally spaced
		contour lines) of {\tt MM}, {\tt MM1} and {\tt UM} at $t = 0.55$.}
	\label{fig:BackP1}
\end{figure}

	\begin{figure}[!ht]
		\centering
		\begin{subfigure}[b]{0.32\textwidth}
			\centering
			\includegraphics[width=1.0\linewidth]{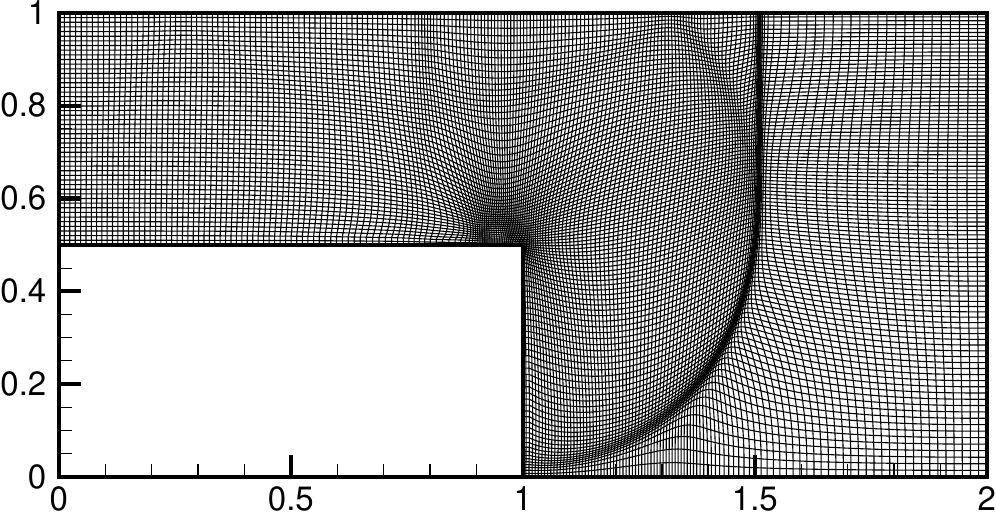}
			\caption{Mesh ({\tt {MM}}, $200 \times 100$)}
		\end{subfigure}
		\begin{subfigure}[b]{0.32\textwidth}
			\centering
			\includegraphics[width=1.0\linewidth]{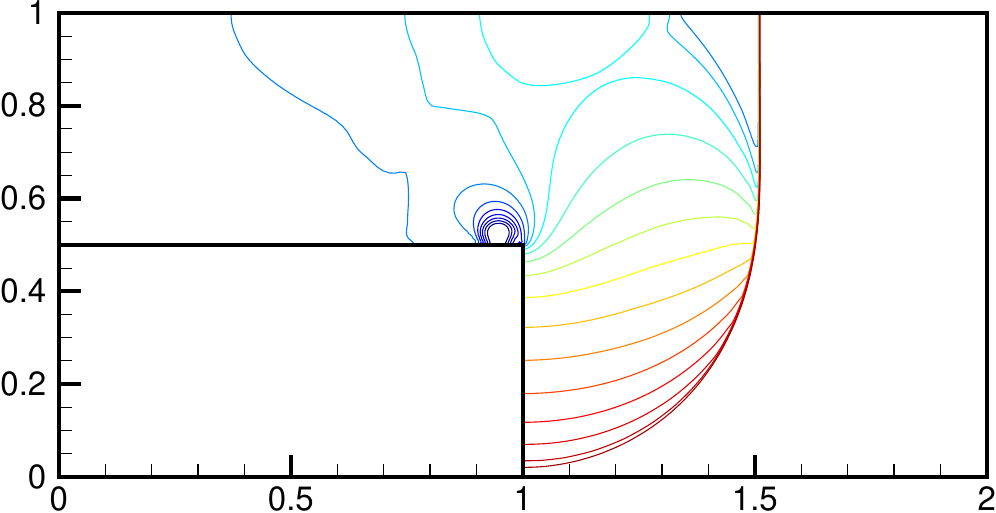}
			\caption{$\rho$ ({\tt {MM}}, $200 \times 100$)}
		\end{subfigure}
		\begin{subfigure}[b]{0.32\textwidth}
			\centering
			\includegraphics[width=1.0\linewidth]{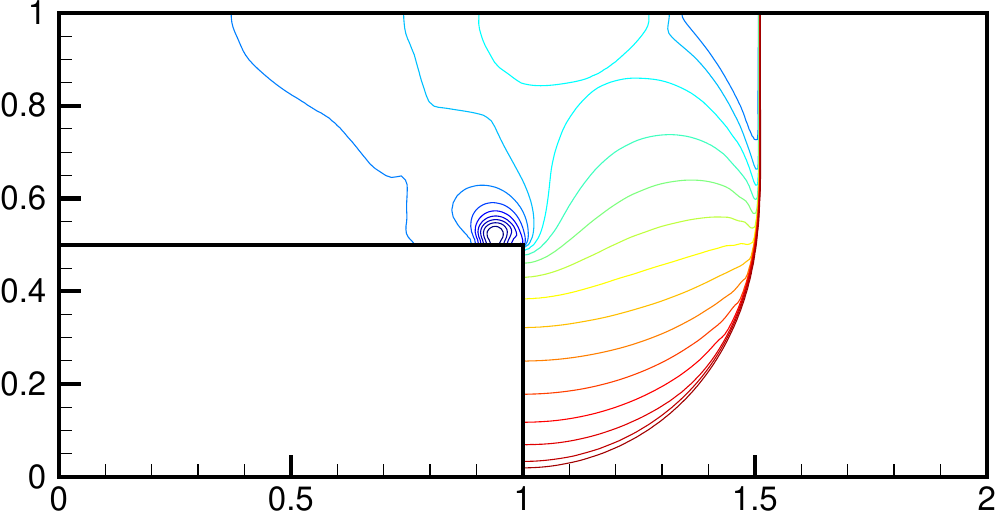}
			\caption{$\rho$ ({\tt MM1}, $200 \times 100$)}
		\end{subfigure}
		
		\begin{subfigure}[b]{0.32\textwidth}
			\centering
			\includegraphics[width=1.0\linewidth]{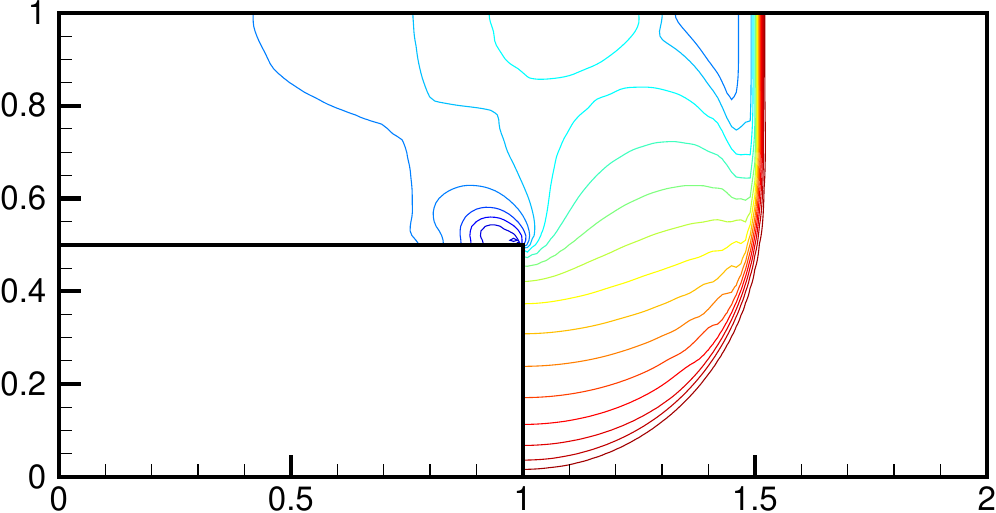}
			\caption{$\rho$ ({\tt UM}, $200 \times 100$)}
		\end{subfigure}
		\begin{subfigure}[b]{0.32\textwidth}
			\centering
			\includegraphics[width=1.0\linewidth]{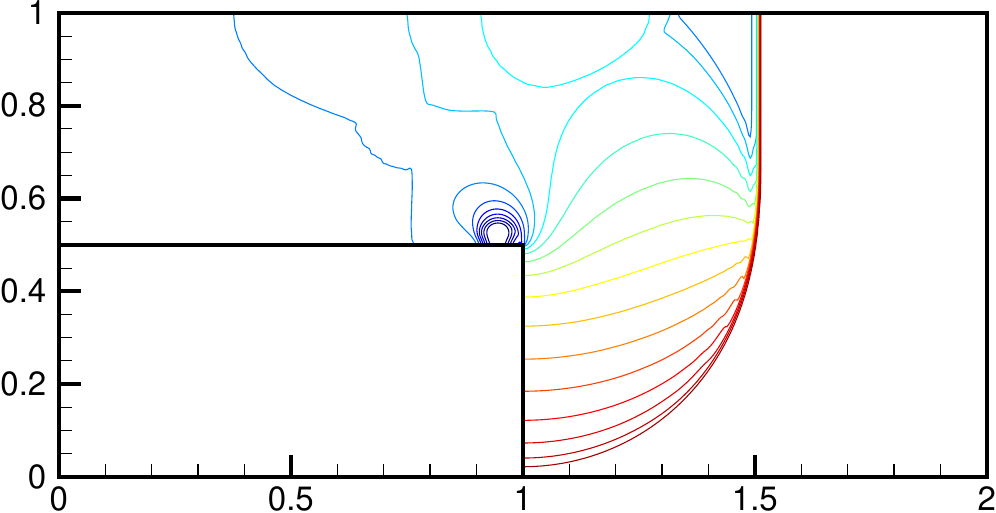}
			\caption{$\rho$ ({\tt UM}, $600 \times 300$)}
		\end{subfigure}
			\caption{Example \ref{ex:backstep}.
			Adaptive mesh of {\tt MM} with $200 \times 100$ cells, density contours ($20$ equally spaced
			contour lines) of {\tt MM}, {\tt MM1} and {\tt UM} at $t = 1.5$.}
		\label{fig:BackD1}
	\end{figure}

	\begin{figure}[!ht]
	\centering
	\begin{subfigure}[b]{0.35\textwidth}
		\centering
		\includegraphics[width=1.0\linewidth]{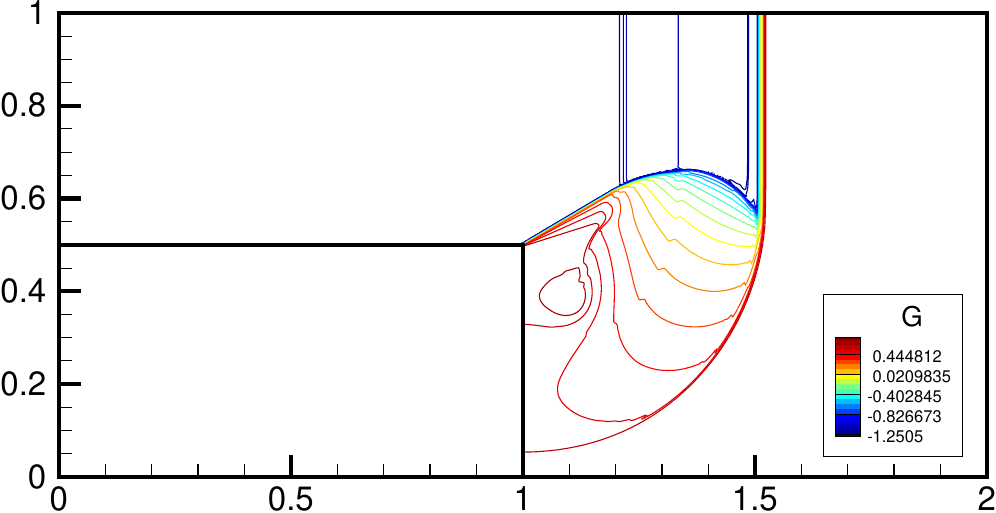}
		\caption{TD1($t = 0.55$)}
		\label{ex: BackD2_G}
	\end{subfigure}
	\begin{subfigure}[b]{0.35\textwidth}
		\centering
		\includegraphics[width=1.0\linewidth]{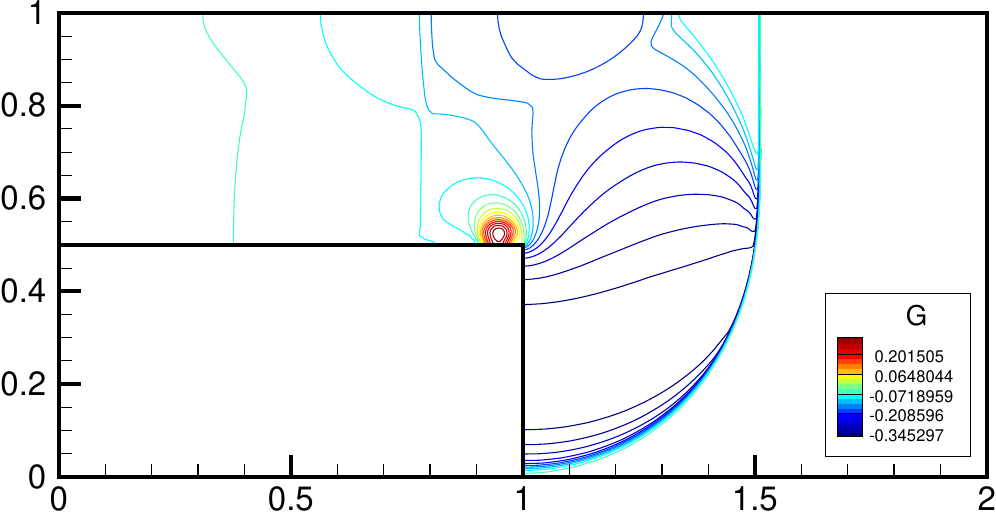}
		\caption{TD2($t = 1.5$)}
		\label{ex:BackD1_G}
	\end{subfigure}
	\caption{
Example \ref{ex:backstep}.
The
fundamental derivative contours ($20$ equally spaced
contour lines) obtained by {\tt MM} with $200 \times 100$ cells for TD1 (left) and TD2 (right).	
}
	\label{fig:Back_G}
\end{figure}
	\begin{figure}[!ht]
	\centering
	\begin{subfigure}[b]{0.32\textwidth}
		\centering
		\includegraphics[width=1.0\linewidth]{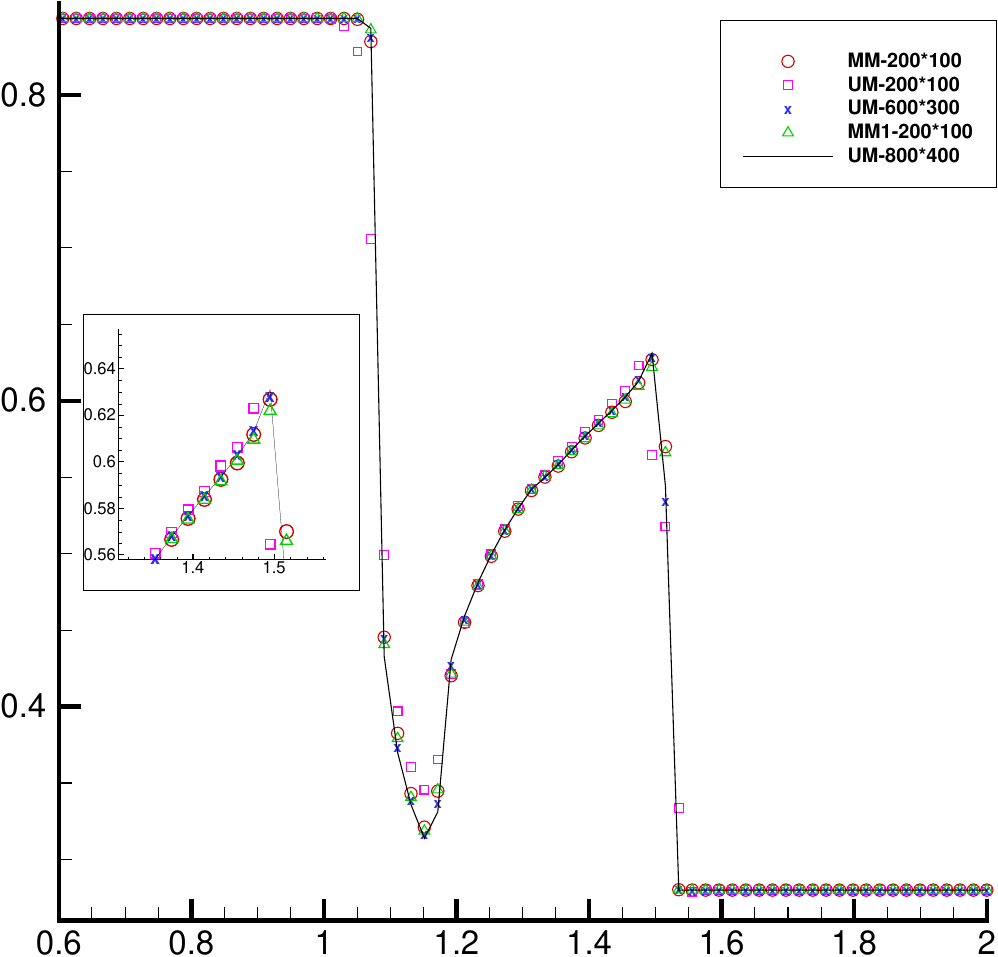}
		\caption{TD1}
		\label{fig:2DTD1}
	\end{subfigure}
	\begin{subfigure}[b]{0.32\textwidth}
		\centering
		\includegraphics[width= 1.1\linewidth]{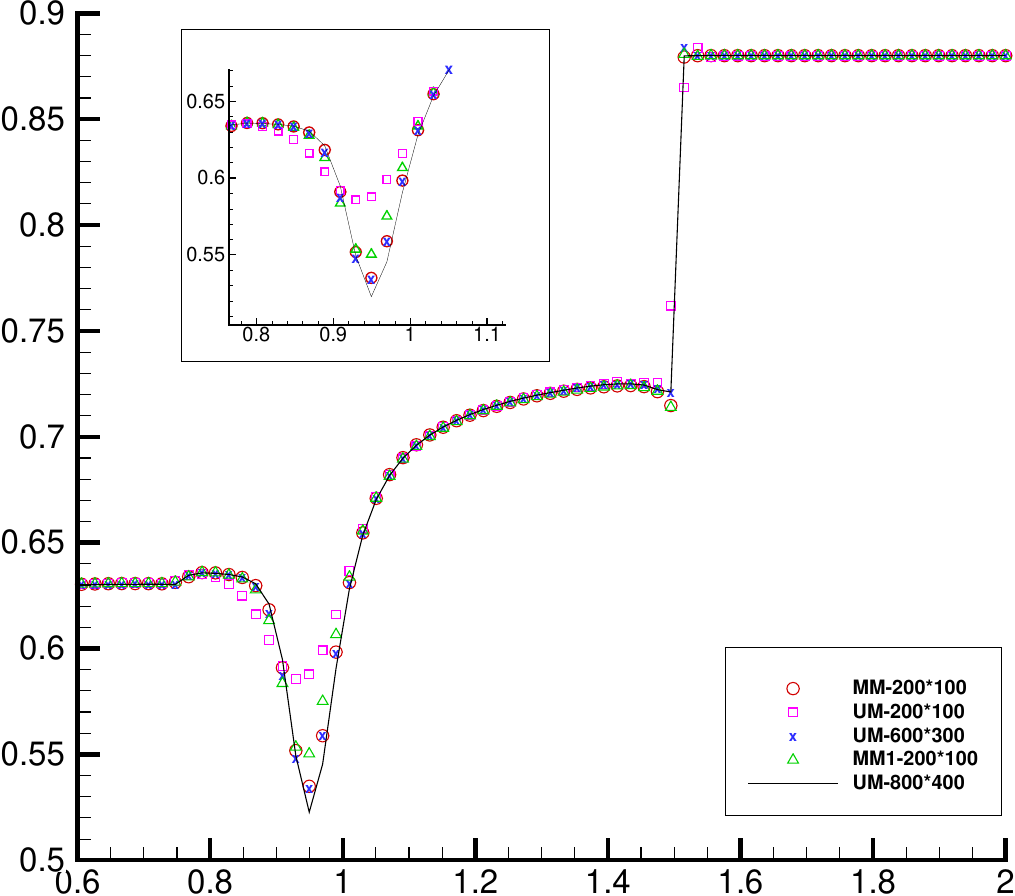}
		\caption{ TD2}
		\label{fig:2DTD2}
	\end{subfigure}
	\caption{Example {\ref{ex:backstep}}. Density  $\rho$ along the line connecting $ (0.6, 0.55)$ and $(2, 0.55)$ for TD1 (left) and TD2 (right), respectively.}
	\label{fig:2DBackComp}
\end{figure}

\end{example}


\begin{example}[3D Smooth Sin Wave]\label{ex:3Dsmooth} \rm
		Similar to Example \ref{ex:1Dsmooth}, this example is   to check the accuracy of  3D {\tt MM} via the 3D sin wave moving periodically with a  constant velocity in the domain  $\Omega_p=[0, 1]^3$.  
	 The exact solutions are given by
	$$(\rho, v_1, v_2,v_3, p) = (0.5 + 0.2\sin(2\pi(x_1+x_2+x_3-0.9 t) ), 0.3, 0.3, 0.3, 1).$$
	The monitor function {\color{black} and the number of smooth times are the same as Example \ref{ex:1Dsmooth}.
		The boundary points move according to the  periodic boundary conditions.} The linear weights in multi-resolution WENO reconstruction are taken as
$\lambda_0 =\lambda_1  =\lambda_2  = 1/3$ in this test.
	Figure \ref{fig:3DSin} presents the $\ell^1$- and $\ell^\infty$-errors and the corresponding orders of convergence in  $\rho$ at $t = 0.1$ with {\tt MM}, which shows that  {\tt MM} can achieve the fifth-order accuracy as expected.
%
		\begin{figure}[!ht]
		\centering
		\includegraphics[width=0.33\linewidth]{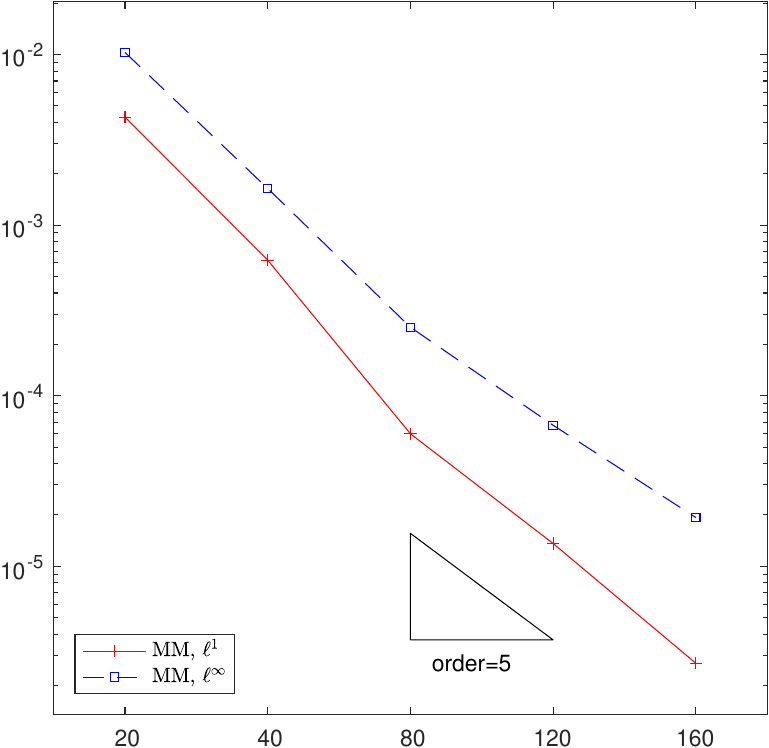}
		\caption{Example \ref{ex:3Dsmooth}.
					$\ell^1$- and $\ell^\infty$-errors and orders of convergence in $\rho$ with respect to $N_1$ at $t = 0.1$. }
		\label{fig:3DSin}
	\end{figure}
	
\end{example}

\begin{example}[3D Spherical Symmetric Shock Tube]\label{ex:3DShock}\rm
	This is an extension of 1D Riemann problem. The initial data are
	$$
	\left(\rho, v_{1}, v_{2},  v_{3}, p\right)=\left\{\begin{array}{ll}
	(1.818,0,0,0,3.0), & \sqrt{x_1^2+x_2^2+x_3^2} < 0.5,\\
	(0.275,0,0, 0,0.575), &\text {otherwise.}
	\end{array}\right.
	$$
	The  domain $\Omega_p$ is $[0,1]^{3}$, and the monitor function of {\tt MM} is the same as Example \ref{ex:backstep}.
The numerical results of $\rho$ along the volume diagonal connecting $(0, 0, 0)$
and $(1, 1, 1)$ are shown in \ref{fig:3DShock_Rho} with the reference solution obtained by a second-order finite volume
scheme using $10000$ cells in 1D spherical coordinates. The adaptive mesh points gather near
the large gradient area of the density and the fundamental derivative as shown in Figure \ref{fig:3DShock_Mesh}. {\tt MM} with $100^3$ mesh cells is better than {\tt UM} with $100^3$ mesh cells near the shock and the rarefaction shock,  comparable with {\tt UM} with  $200^3$ mesh cells. It indicates the  flow characteristics can be precisely captured by {\tt MM}. From Table \ref{time:3DJet}, the CPU time of  {\tt MM} with $100^3$ cells is $12\%$ of {\tt UM} with $200^3$ cells, verifying the efficiency of {\tt MM}.

	\begin{figure}[!ht]
		\centering
		\begin{subfigure}[b]{0.45\textwidth}
			\centering
			\includegraphics[width=1.0\linewidth]{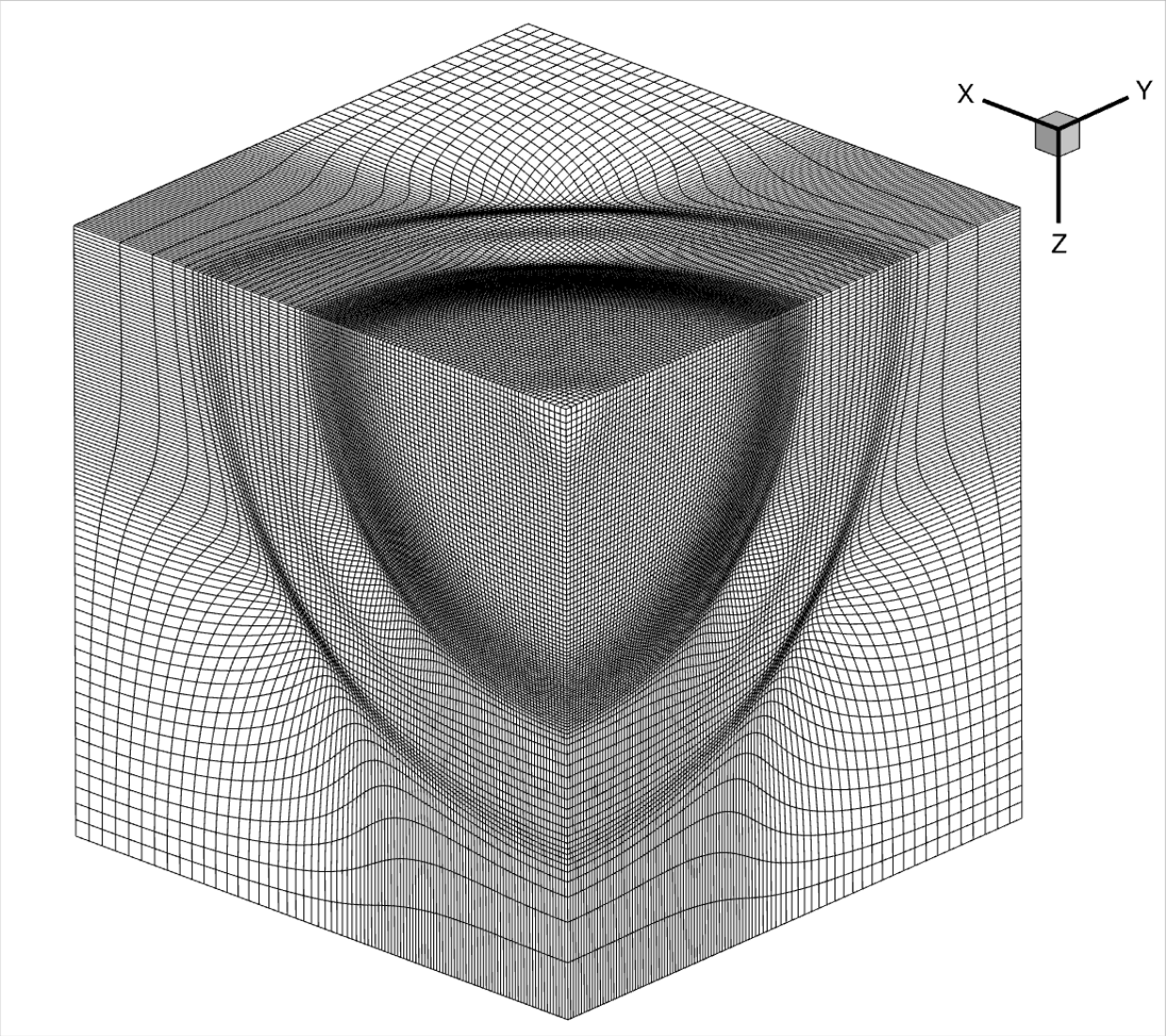}
			\caption{Adaptive mesh of  $100^3$ cells}
			\label{fig:3DShock_Mesh}
		\end{subfigure}
		\begin{subfigure}[b]{0.45\textwidth}
			\centering
			\includegraphics[width=2.6in,height=2.6in, trim=0 0 0 0, clip]{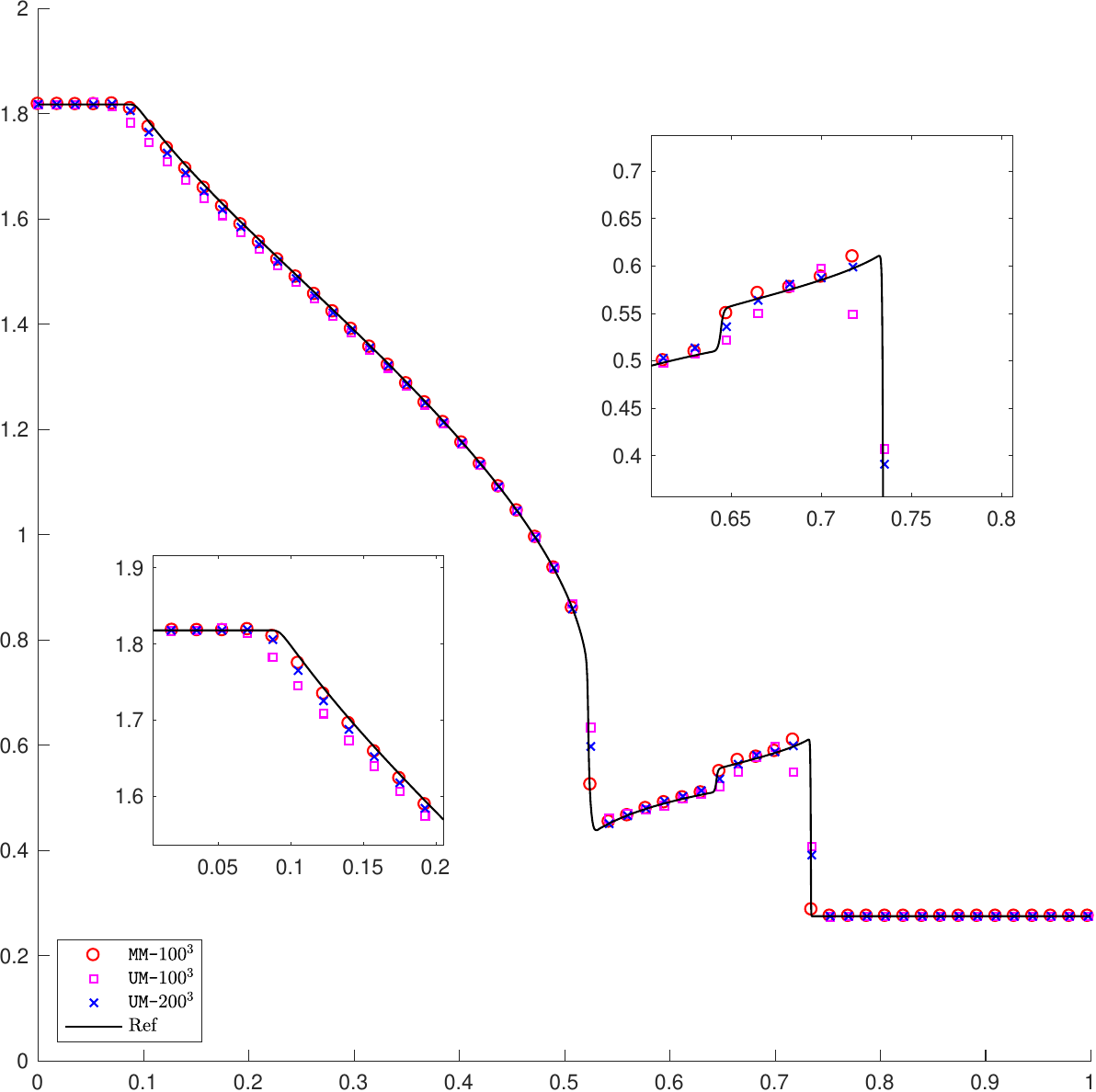}
			\caption{ $\rho$}
			\label{fig:3DShock_Rho}
		\end{subfigure}
		\caption{Example {\ref{ex:3DShock}}. Adaptive mesh of {\tt MM} and $\rho$ along the line connecting $(0, 0, 0)$ and $(1, 1, 1)$ at
$t = 0.2$.}
		\label{fig:3DShock}
	\end{figure}
	
\end{example}

\begin{example}[3D Jet Injection]\label{ex:3DJetInjection}\rm
	This test considers a simplified form of fuel injection
on the domain $\Omega_p=[0, 7.5] \times [-2.5, 2.5] \times [-2.5, 2.5]$. Here, a
higher-density  fluid is injected on the subdomain $[0, 0.6]\times[-0.3, 0.3]\times[-0.3, 0.3]$, the initial data are
$$
\left(\rho, v_{1}, v_{2}, v_{3}, p\right)=
\left(2, 1,0,0,10\right).
$$
The rest of  domain $\Omega_p$ is filled with a stationary lower-density fluid with
	$$
	\left(\rho, v_{1}, v_{2}, v_{3}, p\right)=
	(0.5,0,0,0,10).
	$$
	The monitor function is chosen as 	\eqref{eq:van der Waalsmonitor}, with $\kappa = 2,  \sigma_1 = \rho, \sigma_2 = G, \alpha_1  = \alpha_2= 1200, \alpha_3 = 0$.
	Figure \ref{fig:3DJetMesh} gives the close-up of the adaptive mesh, three offset 2D slices, the iso-surfaces
of $\rho = 0.88$,  and two surface meshes at $t = 4$.
Figure \ref{fig:3DJetRHO}
compares the schlieren
images on the slice $x_3 = 0$ given by $\Phi=\exp \left(-5|\nabla \rho| /|\nabla \rho|_{\max }\right)$, where the top  parts are the results obtained by {\tt MM} with $150 \times 100 \times 100$ cells, while the left and right bottom
half parts are those obtained by {\tt UM} with $150 \times 100 \times 100$ cells and $300 \times 200 \times 200$ cells, respectively. It can be seen that {\tt MM}
is efficient since the CPU time of {\tt MM} with $150 \times 100 \times 100$ cells is $34.9\%$ of  {\tt UM} with $300 \times 200 \times 200$ cells shown in Table \ref{time:3DJet}, and the solution obtained by  {\tt MM} is superior to that obtained by {\tt UM}  with $300 \times 200 \times 200$ cells.
	
	\begin{table}
		\centering
			\begin{tabular}{l|ccc}
			\hline & {\tt MM}  & {\tt UM}& {\tt UM}  \\
			\hline Example \ref{ex:3DShock}&  $ 9 \mathrm{m} 45\mathrm{s}$ ($100^3$)    & $ 3 \mathrm{m} 45\mathrm{s}$ ($100^3$)  & $1 \mathrm{h} 21\mathrm{m}$ ($200^3$)\\
			Example \ref{ex:3DJetInjection}&  $ 4 \mathrm{h} 32\mathrm{m}$ ($150 \times 100 \times 100$)    & $ 49 \mathrm{m} 2\mathrm{s}$ ($150 \times 100 \times 100$)  & $13 \mathrm{h} 0\mathrm{m}$ ($300 \times 200 \times 200$)\\
			\hline
		\end{tabular}
		\caption{ CPU times of Examples 	\ref{ex:3DShock}-\ref{ex:3DJetInjection} (32 cores).
		}
		\label{time:3DJet}
	\end{table}

	\begin{figure}[!ht]
		\centering
		
		\begin{subfigure}[b]{0.45\textwidth}
		\centering
		\includegraphics[width=1.0\linewidth,trim=2 2 2 2, clip ]{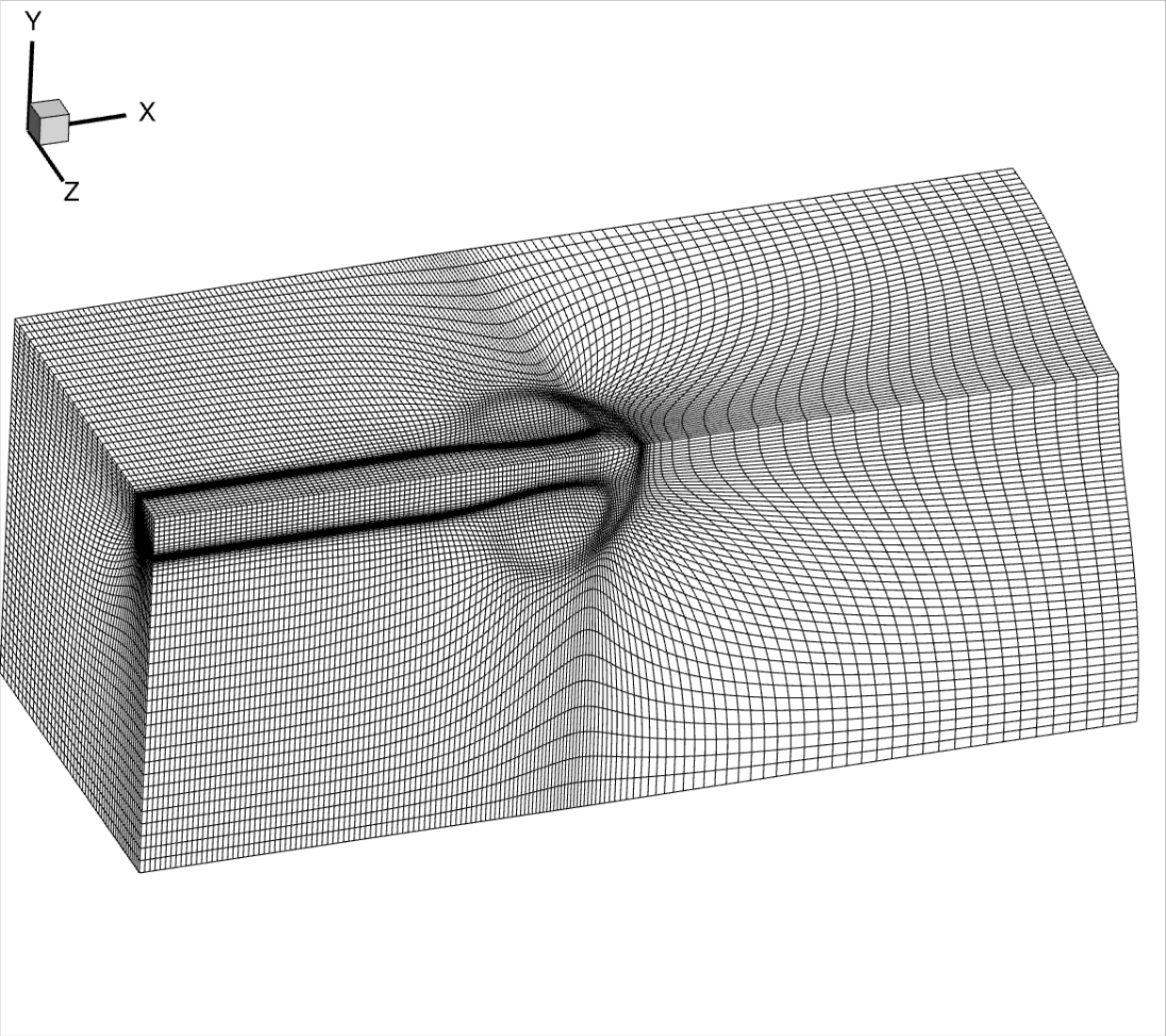}
	\end{subfigure}
	\begin{subfigure}[b]{0.45\textwidth}
		\centering
		\includegraphics[width=1.0\linewidth,trim=2 10 2 2, clip ]{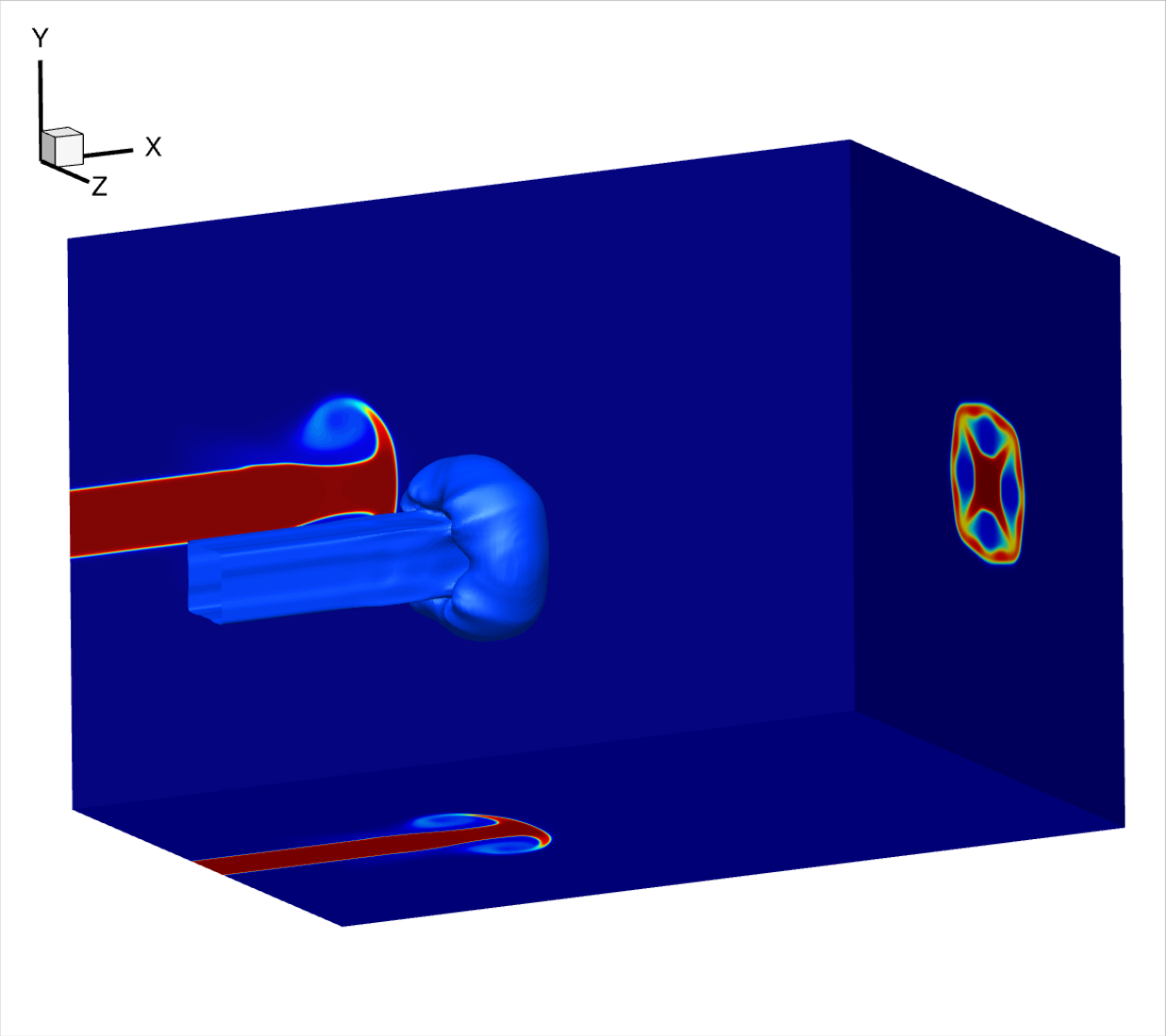}
	\end{subfigure}
	
	\begin{subfigure}[b]{0.45\textwidth}
		\centering
		\includegraphics[width=1.\linewidth,trim=4 4 4 4, clip]{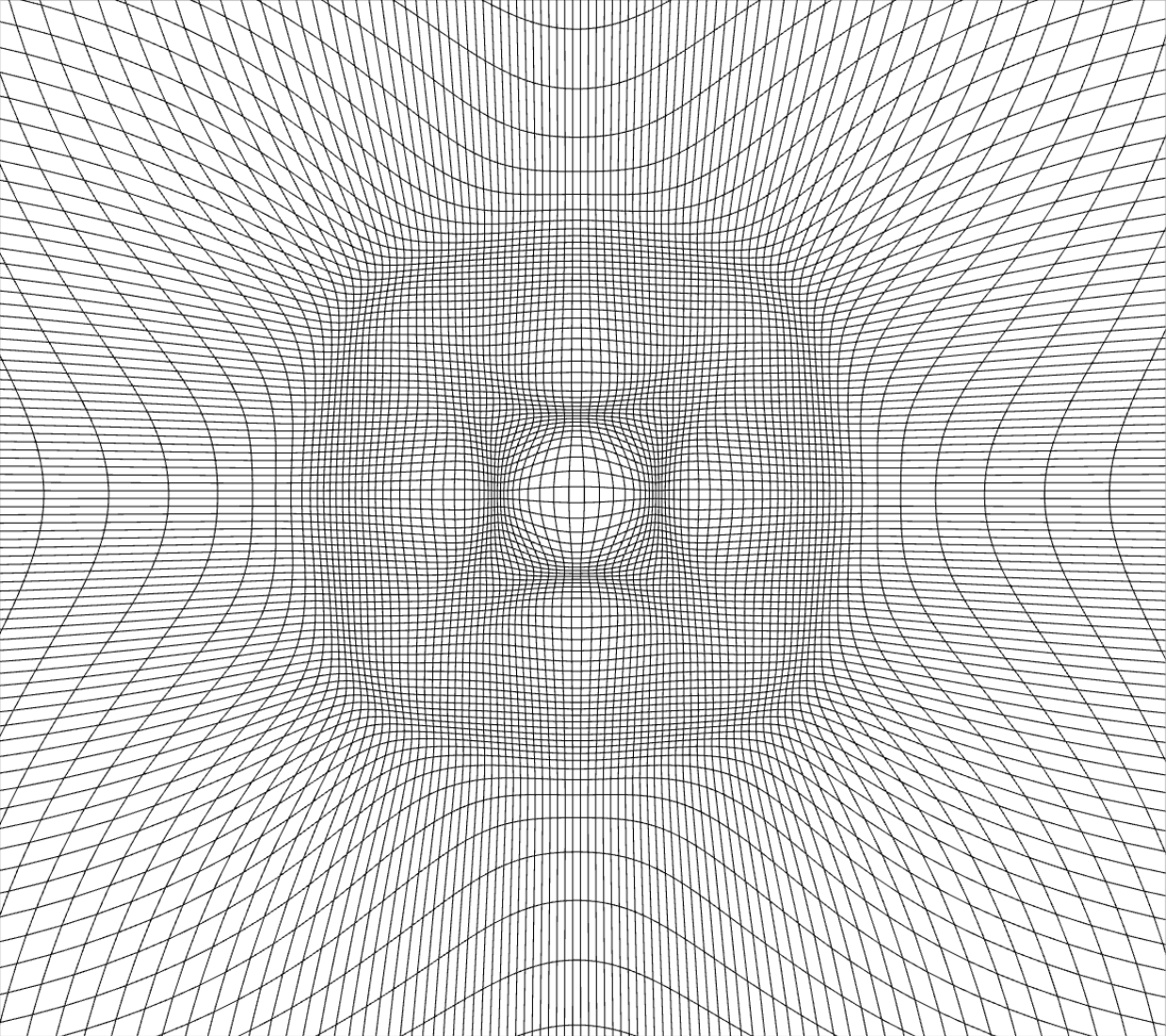}
	\end{subfigure}
	\begin{subfigure}[b]{0.45\textwidth}
		\centering
		\includegraphics[width=0.95\linewidth,trim=12 0 12 0, clip]{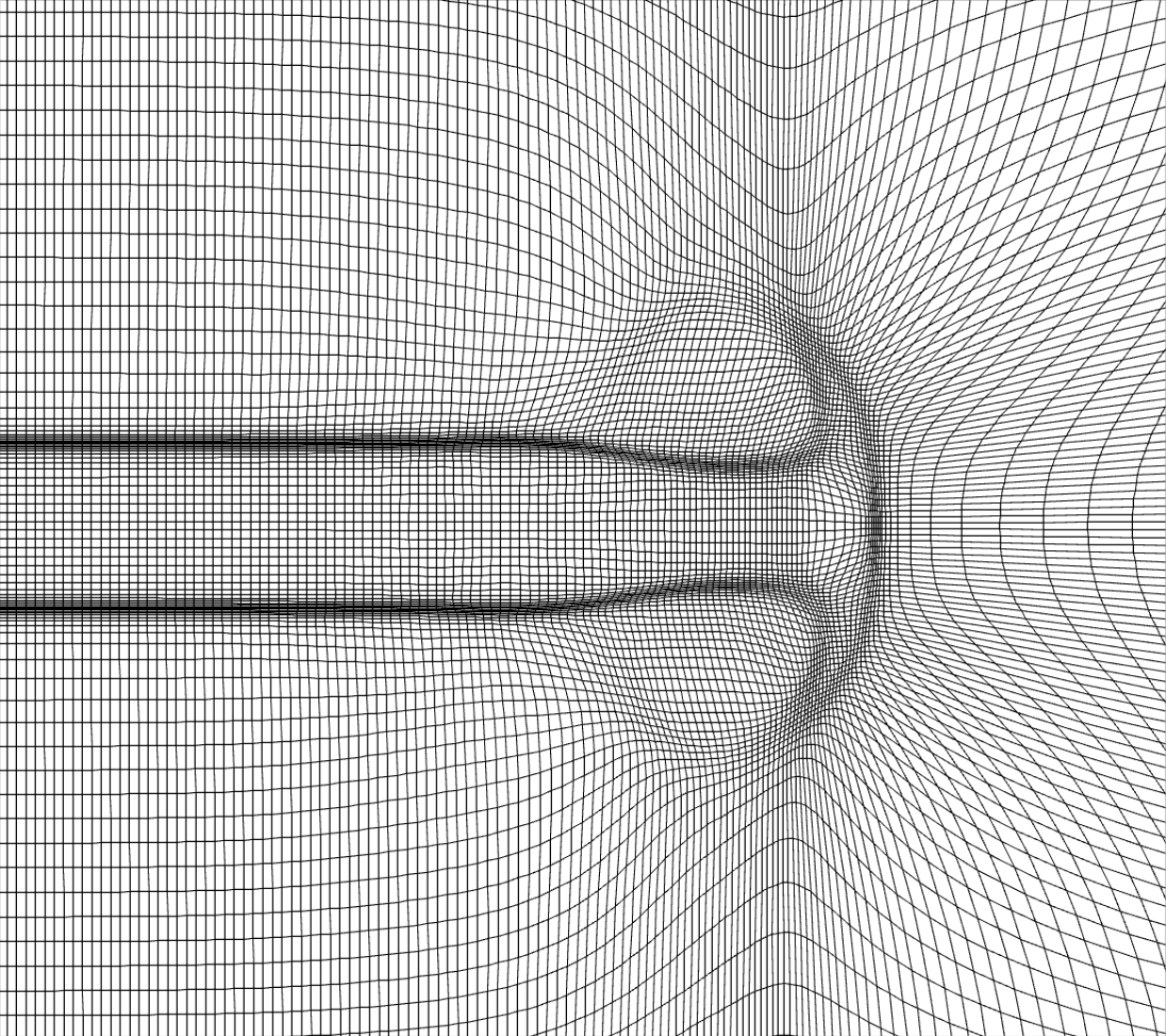}
	\end{subfigure}
		\caption{Example \ref{ex:3DJetInjection}.
			Adaptive meshes and $\rho$ at $t = 4$. Top left: close-up of the adaptive mesh, $i_1 \in [1, 140], i_2 \in [1, 50], i_3 \in [1, 50]$; top right: the iso-surface of $\rho =0.88$; bottom left: the surface mesh with $i_1 = 95$; bottom right: the
surface mesh with $i_3 = 50$.
		}
		\label{fig:3DJetMesh}
	\end{figure}
	
	\begin{figure}[!ht]
		\centering
		\begin{subfigure}[b]{0.42\textwidth}
		\centering
		\includegraphics[width=1.0\linewidth ]{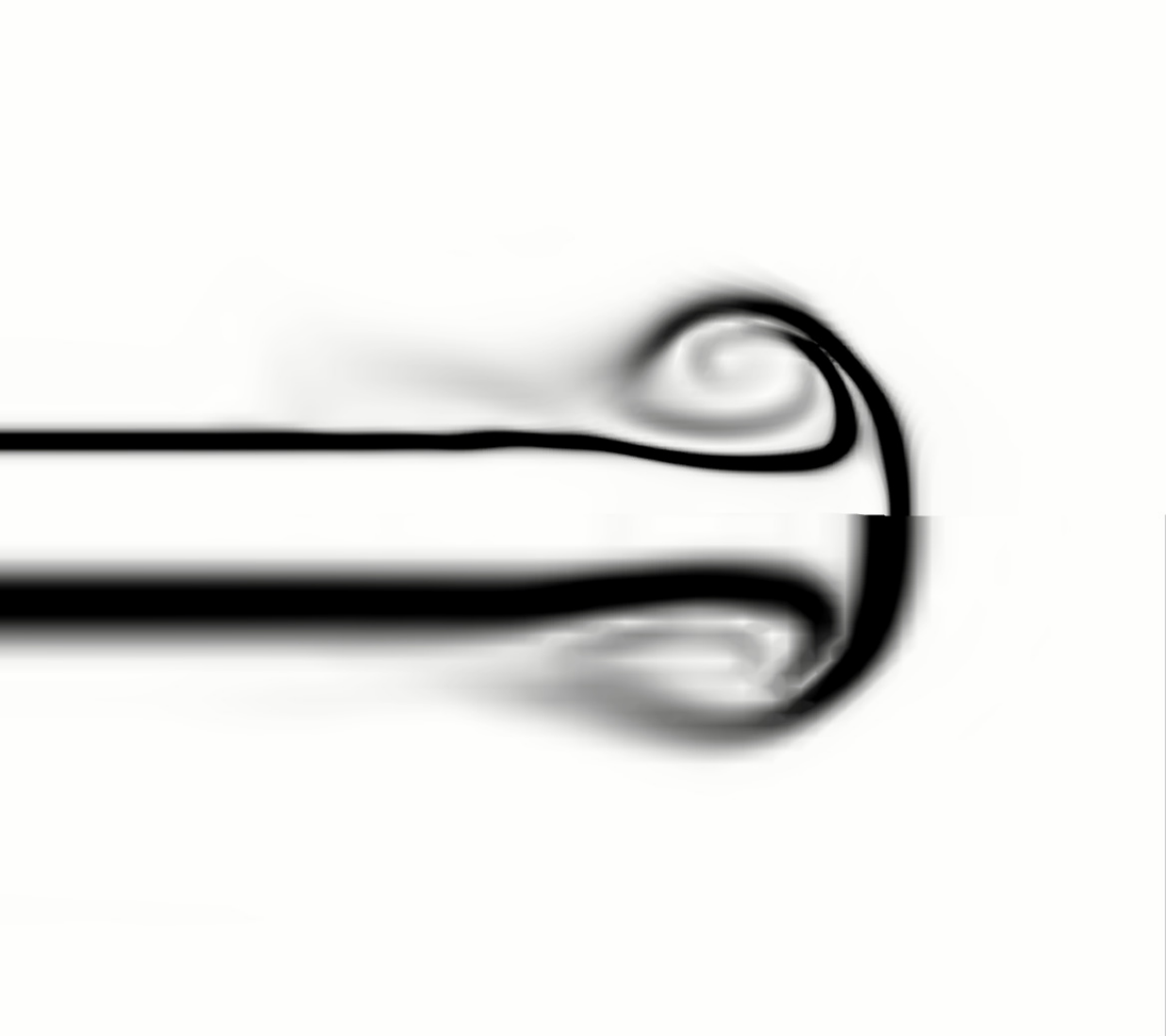}
	\end{subfigure}
	\begin{subfigure}[b]{0.42\textwidth}
		\centering
		\includegraphics [width=1.0\linewidth]{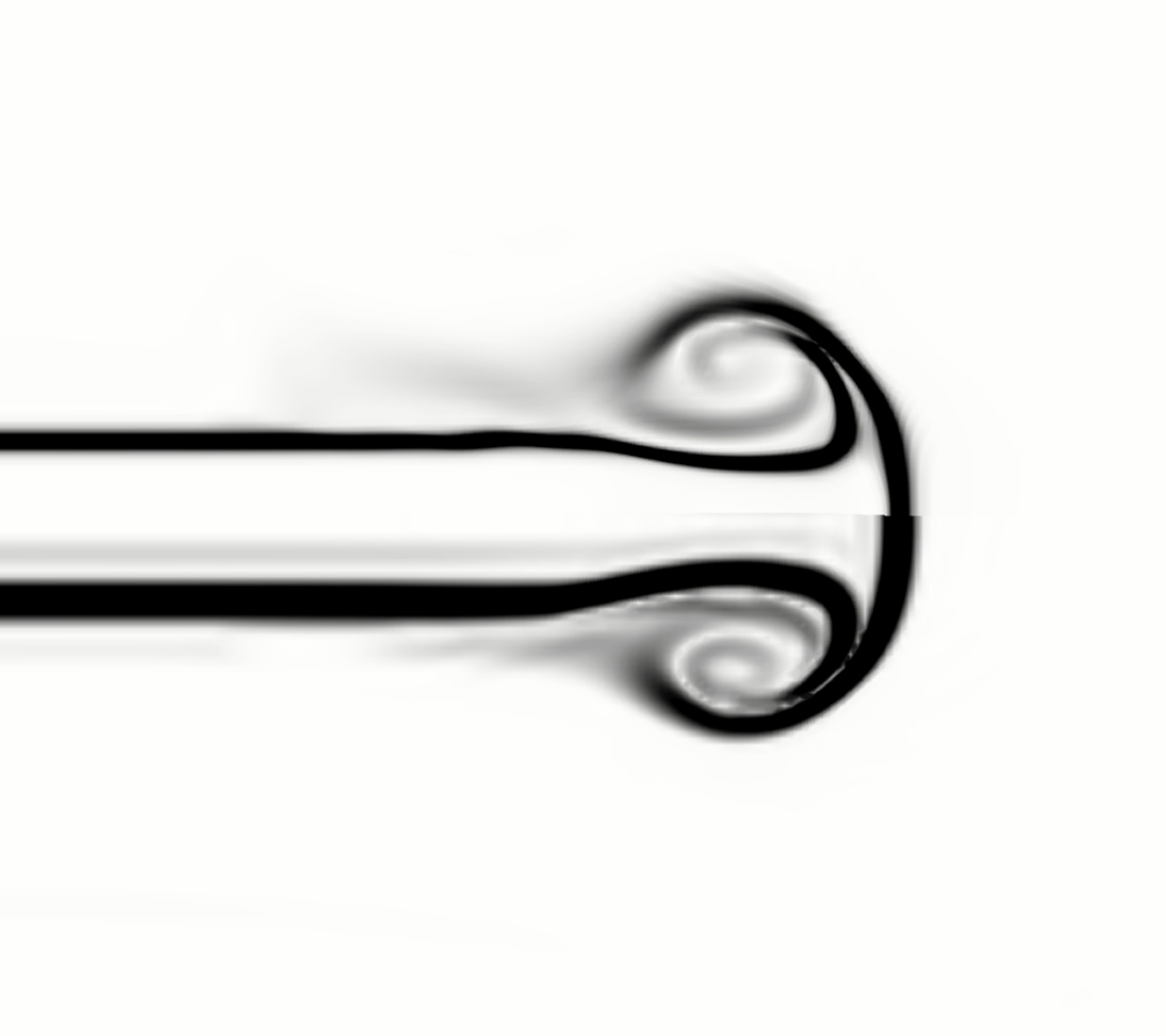}
	\end{subfigure}
		\caption{ Example \ref{ex:3DJetInjection}.
			Schlieren images on the slice $x_3 = 0$ at $t = 4$. Left:  {\tt MM}  (top half) and {\tt UM} (bottom half) with $150 \times 100 \times 100$; right: {\tt MM} with
 $150 \times 100 \times 100$  (top half) and {\tt UM} with $300 \times 200 \times 200$
 (bottom half).}
		\label{fig:3DJetRHO}
	\end{figure}

	
\end{example}

\section{Conclusion}\label{section:Conc}
This paper developed   the high-order entropy stable (ES)  finite difference schemes for multi-dimensional compressible Euler equations with the van der Waals equation of state (EOS) on adaptive moving meshes. 
 Similar to the schemes proposed in \cite{duan2021highorder}, 
the two-point symmetric EC flux was first technically constructed with the thermodynamic entropy and suitably chosen  parameter vectors.
It is worth reminding that the nonlinearity arising from the van der Waals EOS brings challenges for the  derivation of such EC fluxes.
 The high-order  EC fluxes in curvilinear coordinates were derived
by using the high-order discrete GCLs and the linear combination of the two-point EC fluxes. To suppress the oscillation near the shock waves,  the
high-order dissipation terms based on the newly derived complex scaled eigenmatrix and multi-resolution WENO
reconstruction were  added to the EC schemes to derive  the high-order semi-discrete ES schemes which satisfied the semi-discrete entropy inequality. The explicit third-order SSP-RK time discretization and the adaptive moving mesh scheme were implemented to construct  the fully-discrete ES adaptive moving mesh finite difference schemes. Several 1D, 2D and 3D  numerical tests were used to verify the
accuracy, effectiveness, and  ability to capture
the classical and non-classical waves
 of  the proposed
schemes on the parallel computer system with the MPI programming.

\appendix
\section{Different forms of linear equations to obtain two-points EC flux}\label{section:DiffLinearSysm}
This appendix presents the main results on
three different forms of linear equations
obtained with three different parameter vectors
to construct the explicit expression of  two-points symmetric EC flux for the 1D compressible Euler equations.

 If choosing the parameter vector
	$$\boldsymbol{z}=\left(z_{1}, z_{2}, z_{3} \right)^{\mathrm{T}}=(\frac{3-\rho}{T}, \frac{T}{\left(3-\rho\right)^2}, v_1)^{\mathrm{T}},$$
and the identities
	\begin{align*}
	\jump{-s} &= \frac{1}{\delta \meanln{z_1^2z_2}} \jump{\frac{1}{T}} + \dfrac{3\jump{z_1z_2}}{\meanln{3z_1z_2-1}},\\
	\jump{\frac{1}{T}}& = \jump{\left(z_1z_2^{\frac{1}{2}}\right)^2} =
	2\mean{z_1z_2^{\frac{1}{2}}}
	\left(\dfrac{\mean{z_1}}{2\mean{z_2^{\frac{1}{2}}}}\jump{z_2} + \mean{z_2^{\frac{1}{2}}}\jump{z_1}
	\right) \\
	& =\mean{z_1^2}\jump{z_2} + 2\mean{z_1}\mean{z_2}\jump{z_1}\\
	& = \mean{z_1}\mean{z_2}\jump{z_1} + \mean{z_1z_2}\jump{z_1} + \mean{z_1}^2\jump{z_2}\\
	&=:A\jump{z_1} + B\jump{z_2},
	\end{align*}
    then the jump of $\bV$ can be expressed as
	\begin{equation*}
	\left\{
	\begin{aligned}
	\jump{\bV_1}& =  \left(\dfrac{1}{\delta\meanln{1/T}} A + \dfrac{3\mean{z_2}}{\meanln{3z_1z_2-1}}+
	3\mean{z_2} - \frac{27}{4}A  - \frac{\mean{z_3^2}}{2}A +\frac{9}{4}
	\right)\jump{z_1}\\
	~&+ \left(\dfrac{1}{\delta\meanln{1/T}} B +
	\dfrac{3\mean{z_1}}{\meanln{3z_1z_2-1}}
	+ 3\mean{z_1} - \frac{27}{4}B - \frac{\mean{z_3^2}}{2}B  \right)\jump{z_2} -\mean{\frac{1}{T}}\mean{z_3}\jump{z_3},\\
	\jump{\bV_2}& =\mean{z_3}A\jump{z_1} +
	\mean{z_3}B\jump{z_2} + \mean{\frac{1}{T}}\jump{z_3},  \\
	\jump{\bV_3}&= -A\jump{z_1} - B\jump{z_2}.
	\end{aligned}
	\right.
	\end{equation*}
	The coefficient matrix of the above system of linear equations is given as
	\begin{align*}
	&
	\begin{bmatrix}
	\dfrac{1}{\delta\meanln{1/T}} A + \dfrac{3\mean{z_2}}{\meanln{3z_1z_2-1}}+
	3\mean{z_2} - \frac{27}{4}A  - \frac{\mean{z_3^2}}{2}A +\frac{9}{4}  & \mean{z_3}A                      &  -A                      \\
	\dfrac{1}{\delta\meanln{1/T}} B +
	\dfrac{3\mean{z_1}}{\meanln{3z_1z_2-1}}
	+ 3\mean{z_1} - \frac{27}{4}B - \frac{\mean{z_3^2}}{2}B& \mean{z_3}B                      &  -B        \\
	-\mean{\frac{1}{T}}\mean{z_3} & \mean{\frac{1}{T}}                    & 0
	\end{bmatrix}^{\mathrm{T}},
	\end{align*}
whose determinant  is
	\begin{align*}
	&\mean{\frac{3}{T}}   \left(
	\dfrac{B\mean{z_2} - A\mean{z_1}}{\meanln{3z_1z_2-1}}  + B\mean{z_2} - A\mean{z_1} + \frac{3}{4}
	\right)\\
	=& \mean{\frac{3}{T}}   \left(
	\dfrac{ - \mean{z_1z_2}\mean{z_1}}{\meanln{3z_1z_2-1}}  - \mean{z_1z_2}\mean{z_1} + \frac{3}{4}
	\right).
	\end{align*}
{It is not difficult to find that even in the case of
	$T_l = T_r  = \mean{T} >1$, there exist admissible states $\rho_l, \rho_r$ such that the determinant is zero.}

If choosing the following parameter vector
	$$\boldsymbol{z}=\left(z_{1}, z_{2}, z_{3} \right)^{\mathrm{T}}=(\rho,\dfrac{1}{T}, v_1)^{\mathrm{T}},$$
then the jump of $\bV$ can be { similarly} expressed as
	\begin{equation*}
	\left\{
	\begin{aligned}
	\jump{\bV_1}& =  \left(\dfrac{1}{\meanln{z_1}} +\dfrac{1}{\meanln{3-z_1}} + \dfrac{3}{(3-z_{1,l})(3-z_{1,r})}-\dfrac{9}{4} \mean{z_2} \right)\jump{z_1}\\
	~&+ \left(\dfrac{1}{\delta\meanln{z_2}}  - \dfrac{9}{4}\mean{z_1}  - \dfrac{\mean{z_3^2}}{2} \right)\jump{z_2} -\mean{z_2}\mean{z_3}\jump{z_3},\\
	\jump{\bV_2}& =\mean{z_3}\jump{z_2} +
	\mean{z_2}\jump{z_3},  \\
	\jump{\bV_3}&= -\jump{z_2}.
	\end{aligned}
	\right.
	\end{equation*}
	The coefficient matrix of the above system of linear equations is given by
	\begin{align*}
	&
	\begin{bmatrix}
	\dfrac{1}{\meanln{z_1}} +\dfrac{1}{\meanln{3-z_1}} + \dfrac{3}{(3-z_{1,l})(3-z_{1,r})}-\dfrac{9}{4} \mean{z_2} & 0                 &  0                     \\
	\dfrac{1}{\delta\meanln{z_2}}  - \dfrac{9}{4}\mean{z_1}  - \dfrac{\mean{z_3^2}}{2} & \mean{z_3}                    &  -1        \\
	-\mean{z_2}\mean{z_3} & \mean{z_2}                    & 0
	\end{bmatrix}^{\mathrm{T}},
	\end{align*}
and  the corresponding determinant  is
	\begin{align*}
	&\mean{z_2}   \left(
		\dfrac{1}{\meanln{z_1}} +\dfrac{1}{\meanln{3-z_1}} + \dfrac{3}{(3-z_{1,l})(3-z_{1,r})}-\dfrac{9}{4} \mean{z_2}
	\right),
	\end{align*}
	{which is smaller than the determinant obtained using the  parameter vector \eqref{eq:parameter_vector}. }

If choosing the following parameter vector
	$$\boldsymbol{z}=\left(z_{1}, z_{2}, z_{3} \right)^{\mathrm{T}}=(-\frac{9}{8T} + \frac{1}{3-\rho},\frac{1}{3-\rho}, v_1)^{\mathrm{T}},$$
and the identity
	$
	\jump{\frac{1}{T}} = \frac{8}{9}\jump{z_2}  - \frac{8}{9} \jump{z_1}=:A\jump{z_1} + B\jump{z_2}
	$,
	then the jump of $\bV$ can be expressed as
	\begin{equation*}
	\left\{
	\begin{aligned}
	\jump{\bV_1}& =  \left(\dfrac{1}{\delta\meanln{1/T}} A +6 -\mean{\frac{2}{z_2}}  - \frac{\mean{z_3^2}}{2}A\right)\jump{z_1}\\
	~&+ \left(\dfrac{1}{\delta\meanln{1/T}} B  - 3 + \frac{3}{\meanln{\frac{\rho}{3-\rho}}} + \dfrac{\mean{z_1}}{\mean{z_2}}\mean{\frac{2}{z_2}}- \frac{\mean{z_3^2}}{2}B \right)\jump{z_2} -\mean{\frac{1}{T}}\mean{z_3}\jump{z_3},\\
	\jump{\bV_2}& =\mean{z_3}A\jump{z_1} +
	\mean{z_3}B\jump{z_2} + \mean{\frac{1}{T}}\jump{z_3},  \\
	\jump{\bV_3}&= -A\jump{z_1} - B\jump{z_2}.
	\end{aligned}
	\right.
	\end{equation*}
The coefficient matrix of the above system of linear equations is given by
	\begin{align*}
	&
	\begin{bmatrix}
	\dfrac{1}{\delta\meanln{1/T}} A +6 -\mean{\frac{2}{z_2}}  - \frac{\mean{z_3^2}}{2}A & \mean{z_3}A                      &  -A                      \\
	\dfrac{1}{\delta\meanln{1/T}} B - 3 + \frac{3}{\meanln{\frac{\rho}{3-\rho}}}- \frac{\mean{z_3^2}}{2}B + \dfrac{\mean{z_1}}{\mean{z_2}}\mean{\dfrac{2}{z_2}}& \mean{z_3}B                      &  -B        \\
	-\mean{\frac{1}{T}}\mean{z_3} & \mean{\frac{1}{T}}                    & 0
	\end{bmatrix}^{\mathrm{T}},
	\end{align*}
whose	 determinant  is
	\begin{align*}
	&\mean{\frac{1}{T}}   \left( 6B +3A- B\mean{\frac{2}{z_2}} -\frac{3A}{\meanln{\frac{\rho}{3-\rho}}} - \dfrac{\mean{z_1}}{\mean{z_2}}\mean{\frac{2}{z_2}}A
	\right)  \\
	&=\frac{8}{9}\mean{\frac{1}{T}}   \left[3 -
	\mean{\frac{2}{z_2}} +\frac{3}{\meanln{\frac{\rho}{3-\rho}}} + \dfrac{\mean{z_1}}{\mean{z_2}}\mean{\frac{2}{z_2}}
	\right].
	\end{align*}
It is worth noting that the existence and uniqueness of solutions for the systems of linear equations are crucial for obtaining an explicit form of the two-point EC fluxes.
	The above three linear systems  may not obtain  the well-defined  explicit expressions of the  two-point EC fluxes, due to the possibility of their determinant being zero.
	So the construction of two-point EC fluxes is nontrivial, especially for the van der Waals  gas under consideration.


\section*{Acknowledgment}
The work was partially supported by the National Key R\&D
Program of China (Project Numbers 2020YFA0712000 \& 2020YFE0204200) and 
the National Natural Science Foundation
of China (Nos. 12171227, 12288101, \& 12326314).

\bibliographystyle{plain}

\bibliographystyle{lst}
\bibliography{}

\end{document}